\documentclass{article}

\usepackage{amsmath}
\usepackage{mathrsfs}
\usepackage{amssymb}
\usepackage{amsthm}

\usepackage{authblk}

\usepackage{geometry}
\geometry{scale=0.7}

\newtheorem{lemma}{Lemma}[section]
\newtheorem{theorem}[lemma]{Theorem}

\newtheorem{proposition}[lemma]{Proposition}
\newtheorem{definition}[lemma]{Definition}

\newtheorem{example}[lemma]{Example}
\newtheorem{remark}[lemma]{Remark}
\newtheorem{problem}[lemma]{Problem}

\def\1{{\bf 1}}

\newcommand{\ito}[3]{\left(^{\ \ {#3}}_{{#1}\ {#2}}\right)}

\begin{document}
	\title{Uniqueness of vertex operator algebras
		arising from GKO-construction}
	
	\author{Yuhan Gu}
	\affil{School of Mathematics and Statistics, 
		Qingdao University}
	\author{Wen Zheng
		\footnote{Corresponding author\\
			Email address: $wzheng14@qdu.edu.cn$\\
			The author is supported by the NSFC No.~12201334 and the National Science Foundation of Shandong Province No.~ZR2022QA023.}}
	\affil{School of Mathematics and Statistics,
		Qingdao University}
	
	\maketitle
	
	\begin{abstract}
		A series of vertex operator algebras are constructed 
		by 
		GKO-construction, which is a generalization of 3A-algebra and 6A-algebra. 
		It is proved their vertex operator algebra structures are unique under nonzero assumptions on some elements of braiding matrices.
		Furthermore, we show each of them is generated by weight two subspace, i.e. the Griess algebra.
	\end{abstract}
	
	\setcounter{tocdepth}{2}
	\tableofcontents

	\newpage
	
	\section{Introduction}
	Constructing a series of vertex operator algebras (VOAs) $\mathcal{U}_k$ by GKO construction,
	and proving that $\mathcal{U}_k$ has a unique VOA structure
	are the main missions of this paper. 
	We briefly chart the history about structures and classifications of VOA
	before explaining any detailed methodology.
	
	Vertex (operator) algebra was introduced and studied
	extensively in mathematics 
	since 1980s \cite{B,FLM,FHL}. 
	Its properties show analogies of 
	commutative algebras, associative algebras,
	differential algebras and graded algebras.
	With these enriched structures,
	vertex operator algebras have built 
	deep connection between various aspects such as 
	group theories, modular forms, Lie algebras, 
	conformal nets, 
	tensor categories, and mathematical physics, etc,
	making it one of the most colorful field of study.
	
	People have already known a lot of examples of VOAs,
	such as Virasoro VOAs, Heisenberg VOAs, Affine VOAs,
	and Lattice VOAs. Apart from exsiting examples,
	new VOAs can be constructed by extensions, coset constructions,
	and orbifold theories.
	
	One of the most important VOAs 
	is the moonshine VOA $V^{\natural}$
	arising from the study of moonshine conjecture.
	It is proved in \cite{DMZ} that $V^{\natural}$ contains a 
	subalgebra $L(1/2,0)^{\otimes 48}$,
	where $L(1/2,0)$ is the simple Virasoro VOA with central charge $1/2$.
	
	Virasoro VOAs, especially unitary series of Virasoro VOAs, are the simplest and well-studied VOAs, which
	have many ideal properties that clarify the structure of them.
	Especially, we use GKO-construction 
	to get a series of VOAs $\mathcal{U}_k$, 
	and the fusion rules and braiding matrices \cite{FFK,H1,H2} of unitary series of Virasoro VOAs
	paly a vital role in our computations.
	
	One of our motivations is to generalize previous results about 3A-algebra \cite{M,SY} as well as 6A-algebra \cite{DJY,JZ}. For example, the $k=0$ and $k=1$ cases of VOAs $\mathcal{U}_k$ we constructed correspond  3A-algebra and 6A-algebra respectively. See Section 3 for the details. Another motivation is to try to understand Conjecture A.14 in \cite{LY}, which says the VOA generated by several Ising vectors related to 3A-algebra and 2A-algebra has unique VOA structure.
	In fact, VOA structures generated by two Ising vectors have been extensively studied \cite{M,SY,LYY, DJY, DZ, JZ, Zheng}. We would like to use similar methods like braiding matrices and fusion rules of Virasoro VOAs in \cite{DJY,DZ} to prove the uniqueness of VOA structure of $\mathcal{U}_k$, but the difficult part is to prove the uniqueness of $\mathcal{U}_k$ for all $k$ simultaneously.
	We give this proof under the nonvanishing assumption of some elements of the braiding matrices. For explicit computations of the elements of braiding matrices of $\mathcal{U}_1$ and $\mathcal{U}_2$, See Example 5.9 and 5.10.
	 
	
	
	Now we give the sketch of our work. 
	By directly applying GKO-construction,
	we can decompose 
	$$\left(\mathcal{L}\left(3,0\right)\otimes\mathcal{L}\left(1,0\right)\otimes\mathcal{L}\left(1,0\right)\otimes\mathcal{L}\oplus\mathcal{L}\left(3,3\right)\otimes\mathcal{L}\left(1,1\right)\otimes\mathcal{L}\left(1,0\right)\right)\otimes\mathcal{L}(1,0)^{\otimes k}$$
	(See Section 2.5 for details) into
	$\bigoplus_{i}\mathcal{U}_{k,i}\otimes \mathcal{L}(k+5,i)$.
	$\mathcal{U}:=\mathcal{U}_k:=\mathcal{U}_{k,0}$ is defined by taking the commutant of $\mathcal{L}(k+5,0)$ in GKO-construction.%
	More precisely, we have the decomposition
	$\mathcal{U}_k=\bigoplus U^i=\bigoplus P^i\otimes Q^i$,
	where $P_i$ are $\mathcal{U}_{k-1}$ modules and $Q_i$ are modules of Virasoro VOA. 
	Thus possible vertex operator is determined by fusion rules between those modules.
	We proved that fusion rules for $Q_i$ are enough for us.
	Since fusion rules for $Q_i$ are either $1$ or $0$, 
	we can put $Y=\sum \lambda_{i,j}^k \mathcal{Y}_{i,j}^k$,
	where $\mathcal{Y}_{i,j}^k$ is basis of intertwining operator of type 
	$\ito{U^i}{U^j}{U^k}$.
	Assume we fix $Y$ such that $\lambda_{i,j}^k=1$, i.e. $Y=\sum \mathcal{Y}_{i,j}^k$. For another possible VOA structure $\overline{Y}=\sum \lambda_{i,j}^k \mathcal{Y}_{i,j}^k$,
	we need to show $(\mathcal{U},Y)\cong(\mathcal{U},\overline{Y})$.
	Using mirror extension theory \cite{Lin}, we prove $\lambda_{i,j}^k\neq 0$ for all $i,j,k$ compatible with fusion rules.
	Under the nonvanishing assumption of some elements of the braiding matrices,
	we can assert $\left(\lambda_{i,j}^k\right)^2=1$ if $N_{i,j}^k=1$.
	Then we use $\lambda_{i,j}^k$ to define a map $\sigma$ between $(\mathcal{U},Y)$ and $(\mathcal{U},\overline{Y})$,
	and prove it is exactly the isomorphism we want.
	Furthermore, we show that $\mathcal{U}$ is generated by Griess algebra.
	
	This paper is organized as follows: 
	In Section 2, we recall some basic facts about
	vertex operator algebras and related topics needed in this paper. 
	In Section 3, the module $\mathcal{U}_k$ is constructed.
	In Section 4, we prove $\lambda_{i,j}^k\neq 0$ for all $i,j,k$ compatible with fusion rules.
	In Section 5, the isomorphism $\sigma$ is defined, and our main goal, the uniqueness of VOA structure on $\mathcal{U}_k$, is proved.
	In Section 6, we prove that $\mathcal{U}_k$ are generated by Griess algebra.
	In Section 7, we give some open problems based on this paper and its citations.
	
	\section{Preliminary}
	In this section, we review the basics on vertex operator algebras,
	the theory of quantum dimensions from \cite{DJX}, the coset realization
	of the discrete series of the unitary representations for the Virasoro
	algebra \cite{GKO}, the braiding matrices for certain Virasoro
	vertex operator algebras \cite{FFK}, 
	the structure of $6A$-algebra \cite{DJY},
	and mirror extensions for rational vertex operator algebras \cite{Lin}.
	
	\subsection{Basics}
	Let $V=(V,\ Y,\ \mathbf{1},\ \omega)$ be a vertex operator algebra.
	Let $Y(v,\ z)=\sum_{n\in\mathbb{Z}}v_{n}z^{-n-1}$ denote the vertex
	operator of $V$ for $v\in V$, where $v_{n}\in\mbox{End}(V)$. We
	first recall some basic notions from \cite{FLM,Z,DLM1,DLM3}.
	
	\begin{definition} $V_{2}$ is called \emph{Griess algebra}. A vector $v\in V_{2}$ is called a \emph{Virasoro
			vector with central charge $c_{v}$ }if it satisfies\emph{ $v_{1}v=2v$
		}and $v_{3}v=\frac{c_{v}}{2}\mathbf{1}$. Then the operators $L_{n}^{v}:=v_{n+1},\ n\in\mathbb{Z}$,
		satisfy the Virasoro commutation relation
		\[
		\left[L_{m}^{v},\ L_{n}^{v}\right]=\left(m-n\right)L_{m+n}^{v}+\delta_{m+n,\ 0}\frac{m^{3}-m}{12}c_{v}
		\]
		for $m,\ n\in\mathbb{Z}.$ A Virasoro vector $v\in V_{2}$ with central
		charge $1/2$ is called an \emph{Ising vector }if $v$ generates the
		Virasoro vertex operator algebra $L(1/2,\ 0)$.
		
	\end{definition}
	
	\begin{definition} An \emph{automorphism} $g$ of a vertex operator
		algebra $V$ is a linear isomorphism of $V$ satisfying $g(\bf{1})=\bf{1}$, $g\left(\omega\right)=\omega$
		and $gY\left(v,z\right)g^{-1}=Y\left(gv,z\right)$ for any $v\in V$.
		We denote by $\mbox{Aut}\left(V\right)$ the group of all automorphisms
		of $V$. \end{definition}
	
	For a subgroup $G\le\mbox{Aut}\left(V\right)$ the fixed point set
	$V^{G}=\left\{ v\in V|g\left(v\right)=v,\forall g\in G\right\} $
	has a vertex operator algebra structure.
	
	Let $g$ be an automorphism of a vertex operator algebra $V$ of order
	$T$. Denote the decomposition of $V$ into eigenspaces of $g$ as:
	
	\[
	V=\oplus_{r\in\mathbb{Z}/T\text{\ensuremath{\mathbb{Z}}}}V^{r}
	\]
	where $V^{r}=\left\{ v\in V|gv=e^{2\pi ir/T}v\right\} $.
	
	\begin{definition} A \emph{weak $g$-twisted $V$-module} $M$ is
		a vector space with a linear map
		\begin{align*}
			Y_{M}: & V\to\left(\text{End}M\right)\{z\}\\
			& v\mapsto Y_{M}\left(v,z\right)=\sum_{n\in\mathbb{Q}}v_{n}z^{-n-1}\ \left(v_{n}\in\mbox{End}M\right)
		\end{align*}
		which satisfies the following: for all $0\le r\le T-1$, $u\in V^{r}$,
		$v\in V$, $w\in M$,
		
		\[
		Y_{M}\left(u,z\right)=\sum_{n\in-\frac{r}{T}+\mathbb{Z}}u_{n}z^{-n-1},
		\]
		
		\[
		u_{l}w=0\ for\ l\gg0,
		\]
		
		\[
		Y_{M}\left(\mathbf{1},z\right)=Id_{M},
		\]
		
		\[
		z_{0}^{-1}\text{\ensuremath{\delta}}\left(\frac{z_{1}-z_{2}}{z_{0}}\right)Y_{M}\left(u,z_{1}\right)Y_{M}\left(v,z_{2}\right)-z_{0}^{-1}\delta\left(\frac{z_{2}-z_{1}}{-z_{0}}\right)Y_{M}\left(v,z_{2}\right)Y_{M}\left(u,z_{1}\right)
		\]
		
		\begin{equation}
			=z_{2}^{-1}\left(\frac{z_{1}-z_{0}}{z_{2}}\right)^{-r/T}\delta\left(\frac{z_{1}-z_{0}}{z_{2}}\right)Y_{M}\left(Y\left(u,z_{0}\right)v,z_{2}\right),\label{Jacobi for twisted V-module}
		\end{equation}
		where $\delta\left(z\right)=\sum_{n\in\mathbb{Z}}z^{n}$.
		
	\end{definition}
	
	\begin{definition}
		
		A $g$-\emph{twisted $V$-module} is a weak $g$-twisted $V$-module
		$M$ which carries a $\mathbb{C}$-grading induced by the spectrum
		of $L(0)$ where $L(0)$ is the component operator of $Y(\omega,z)=\sum_{n\in\mathbb{Z}}L(n)z^{-n-2}.$
		That is, we have $M=\bigoplus_{\lambda\in\mathbb{C}}M_{\lambda},$
		where $M_{\lambda}=\left\{ w\in M|L(0)w=\lambda w\right\} $. Moreover,
		$\dim M_{\lambda}$ is finite and for fixed $\lambda,$ $M_{\frac{n}{T}+\lambda}=0$
		for all small enough integers $n.$
		
	\end{definition}
	
	\begin{definition}An \emph{admissible $g$-twisted $V$-module} $M=\oplus_{n\in\frac{1}{T}\mathbb{Z}_{+}}M\left(n\right)$
		is a $\frac{1}{T}\mathbb{Z}_{+}$-graded weak $g$-twisted module
		such that $u_{m}M\left(n\right)\subset M\left(\mbox{wt}u-m-1+n\right)$
		for homogeneous $u\in V$ and $m,n\in\frac{1}{T}\mathbb{Z}.$
		
	\end{definition}
	
	If $g=Id_{V}$ we have the notions of weak, ordinary and admissible
	$V$-modules \cite{DLM2}.
	
	\begin{definition}A vertex operator algebra $V$ is called \emph{$g$-rational}
		if the admissible $g$-twisted module category is semisimple. $V$
		is called \emph{rational} if $V$ is $1$-rational. \end{definition}
	
	It was proved in \cite{DLM2} that if $V$ is rational then there
	are only finitely irreducible admissible $V$-modules up to isomorphism
	and each irreducible admissible $V$-module is ordinary. Let $M^{0},M^{1},$
	$\cdots,M^{d}$ be all the irreducible modules up to isomorphism with
	$M^{0}=V$. Then there exists $h_{i}\in\mathbb{C}$ for $i=0,\cdots,d$
	such that
	\[
	M^{i}=\oplus_{n=0}^{\infty}M_{h_{i}+n}^{i}
	\]
	where $M_{h_{i}}^{i}\not=0$ and $L\left(0\right)|_{M_{h_{i}+n}^{i}}=h_{i}+n$,
	$\forall n\in\mathbb{Z}_{+}$. $h_{i}$ is called the \emph{conformal
		weight} of $M^{i}$. We denote $M^{i}\left(n\right)=M_{h_{i}+n}^{i}.$
	
	Let $M=\oplus_{\lambda_{\in\mathbb{C}}}M_{\lambda}$ be a $V$-module.
	The restricted dual of $M$ is defined by $M'=\oplus_{\lambda_{\in\mathbb{C}}}M_{\lambda}^{\ast}$
	where $M_{\lambda}^{\ast}=\text{Hom}_{\mathbb{C}}\left(M_{\lambda},\mathbb{C}\right).$
	It was proved in \cite{FHL} that $M'=\left(M',Y_{M'}\right)$ is
	naturally a $V$-module such that
	\[
	\left\langle Y_{M'}\left(v,z\right)f,u\right\rangle =\left\langle f,Y_{M}\left(e^{zL\left(1\right)}\left(-z^{-2}\right)^{L\left(0\right)}v,z^{-1}\right)u\right\rangle ,
	\]
	for $v\in V,$ $f\in M'$ and $u\in M$, and $\left(M'\right)'\cong M$.
	Moreover, if $M$ is irreducible, so is $M'$. A $V$-module $M$
	is said to be \emph{self dual} if $M\cong M'$.
	
	\begin{definition} A vertex operator algebra $V$ is said to be \emph{$C_{2}$-cofinite}
		if $V/C_{2}(V)$ is finite dimensional, where $C_{2}(V)=\langle v_{-2}u|v,u\in V\rangle.$
	\end{definition}
	
	\begin{definition}A vertex operator algebra $V=\oplus_{n\in\mathbb{Z}}V_{n}$
		is said to be of \emph{CFT type} if $V_{n}=0$ for negative $n$ and
		$V_{0}=\mathbb{C}1$. \end{definition}
	
	\begin{definition} Let $\left(V,Y\right)$ be a vertex operator algebra
		and let $\left(M^{i},Y^{i}\right),\ \left(M^{j},Y^{j}\right)$ and
		$\left(M^{k},Y^{k}\right)$ be three $V$-modules. An \emph{intertwining
			operator of type $\left(\begin{array}{c}
				M^{k}\\
				M^{i}\ M^{j}
			\end{array}\right)$} is a linear map
		\begin{gather*}
			\mathcal{Y}\left(\cdot,z\right):\ M^{i}\to\text{\ensuremath{\mbox{Hom}\left(M^{j},\ M^{k}\right)\left\{ z\right\} }}\\
			u\mapsto\mathcal{Y}\left(u,z\right)=\sum_{n\in\mathbb{Q}}u_{n}z^{-n-1}
		\end{gather*}
		satisfying:
		
		(1) for any $u\in M^{i}$ and $v\in M^{j}$, $u_{n}v=0$ for $n$
		sufficiently large;
		
		(2) $\mathcal{Y}(L_{-1}v,\ z)=\left(\frac{d}{dz}\right)\mathcal{Y}\left(v,z\right)$
		for $v\in M^{i}$;
		
		(3) (Jacobi Identity) for any $u\in V,\ v\in M^{i}$,
		\begin{alignat*}{1}
			& z_{0}^{-1}\delta\left(\frac{z_{1}-z_{2}}{z_{0}}\right)Y^{k}\left(u,z_{1}\right)\mathcal{Y}\left(v,z_{2}\right)-z_{0}^{-1}\delta\left(\frac{-z_{2}+z_{1}}{z_{0}}\right)\mathcal{Y}\left(v,z_{2}\right)Y^{j}\left(u,z_{1}\right)\\
			& =z_{2}^{-1}\left(\frac{z_{1}-z_{0}}{z_{2}}\right)\mathcal{Y}\left(Y^{i}\left(u,z_{0}\right)v,z_{2}\right).
		\end{alignat*}
		The space of all intertwining operators of type $\left(\begin{array}{c}
			M^{k}\\
			M^{i}\ M^{j}
		\end{array}\right)$ is denoted $I_{V}\left(\begin{array}{c}
			M^{k}\\
			M^{i}\ M^{j}
		\end{array}\right)$. Without confusion, we also denote it by $I_{i,j}^{k}.$ Let $N_{i,\ j}^{k}=\dim I_{i,j}^{k}$.
		These integers $N_{i,j}^{k}$ are called the \emph{fusion rules}.
	\end{definition}
	
	
	
	Let $V^{1}$ and $V^{2}$ be vertex operator algebras. Let $M^{i}$
	, $i=1,2,3$, be $V^{1}$-modules, and $N^{i}$, $i=1,2,3$, be $V^{2}$-modules.
	Then $M^{i}\otimes N^{i}$, $i=1,2,3$, are $V^{1}\otimes V^{2}$-modules
	by \cite{FHL}. The following property was given in \cite{ADL}:
	
	\begin{proposition} \label{fusion of tensor product}If $N_{M^{1},M^{2}}^{M^{3}}<\infty$
		or $N_{N^{1},N^{2}}^{N^{3}}<\infty,$ then
		\[
		N_{M^{1}\otimes N^{1},M^{2}\otimes N^{2}}^{M^{3}\otimes N^{3}}=N_{M^{1},M^{2}}^{M^{3}}N_{N^{1},N^{2}}^{N^{3}}.
		\]
	\end{proposition}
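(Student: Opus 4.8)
The plan is to prove the statement by exhibiting an explicit linear isomorphism
\[
\Phi:\ I_{V^1}\ito{M^1}{M^2}{M^3}\otimes I_{V^2}\ito{N^1}{N^2}{N^3}\ \longrightarrow\ I_{V^1\otimes V^2}\ito{M^1\otimes N^1}{M^2\otimes N^2}{M^3\otimes N^3},
\]
from which the asserted equality of fusion rules follows by comparing dimensions. To build $\Phi$, given $\mathcal{Y}_1$ and $\mathcal{Y}_2$ of the respective types, I would set
\[
\left(\mathcal{Y}_1\otimes\mathcal{Y}_2\right)\left(u_1\otimes u_2,z\right)\left(v_1\otimes v_2\right)=\mathcal{Y}_1\left(u_1,z\right)v_1\otimes\mathcal{Y}_2\left(u_2,z\right)v_2
\]
and extend bilinearly. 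First I would verify this is an intertwining operator for $V^1\otimes V^2$ of the stated type: the lower-truncation condition and the $L(-1)$-derivative property pass to the tensor product immediately, using $L(-1)=L^1(-1)+L^2(-1)$ together with the Leibniz rule, while the Jacobi identity for a general $a\otimes b\in V^1\otimes V^2$ is obtained by multiplying the Jacobi identities for $\mathcal{Y}_1$ and $\mathcal{Y}_2$ and recombining the resulting $\delta$-function expansions, exactly as in the tensor-product-of-modules construction of \cite{FHL}.

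For injectivity I would work with bi-homogeneous test vectors. If $u_1,v_1,w_1'$ are homogeneous, the conformal-weight grading forces $\langle w_1',\mathcal{Y}_1(u_1,z)v_1\rangle$ to be a single monomial $\kappa^{(1)}z^{s_1}$ whose exponent $s_1=\mathrm{wt}\,w_1'-\mathrm{wt}\,u_1-\mathrm{wt}\,v_1$ depends only on the weights, and likewise for the second factor. Hence the matrix coefficient of a tensor product operator factors as $\kappa^{(1)}\kappa^{(2)}z^{s_1+s_2}$, so a relation $\sum_{\alpha,\beta}c_{\alpha\beta}\,\mathcal{Y}_1^{(\alpha)}\otimes\mathcal{Y}_2^{(\beta)}=0$ collapses to $\sum_{\alpha,\beta}c_{\alpha\beta}\kappa_\alpha^{(1)}\kappa_\beta^{(2)}=0$ for every homogeneous choice. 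Since the bases $\{\mathcal{Y}_1^{(\alpha)}\}$ and $\{\mathcal{Y}_2^{(\beta)}\}$ are separated by their matrix coefficients, this yields $c_{\alpha\beta}=0$, proving $N_{M^1\otimes N^1,M^2\otimes N^2}^{M^3\otimes N^3}\ge N_{M^1,M^2}^{M^3}N_{N^1,N^2}^{N^3}$.

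Surjectivity is the crux. Given an arbitrary $\mathcal{Y}$ of the tensor-product type, the same grading argument shows that on bi-homogeneous vectors every matrix coefficient is a single monomial, so $\mathcal{Y}$ is encoded by one multilinear functional $\mu$ on $M^1\otimes M^2\otimes(M^3)'\otimes N^1\otimes N^2\otimes(N^3)'$ (summed over weight components). The Jacobi identity for $\mathcal{Y}$ applied to elements $a\otimes\mathbf{1}$ with $a\in V^1$ (and the choice $a=\omega\otimes\mathbf 1$ recovering the $L^1$-grading) translates into precisely the linear relations defining $I_{V^1}\ito{M^1}{M^2}{M^3}$ in the first three tensor slots, while $\mathbf{1}\otimes b$ with $b\in V^2$ gives the relations defining $I_{V^2}\ito{N^1}{N^2}{N^3}$ in the last three. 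Thus $\mu$ lies in the joint solution space of two separated systems of linear constraints, one in each group of variables. The decisive step is the linear-algebra fact that this joint solution space equals the tensor product of the two individual solution spaces, provided at least one of them is finite-dimensional: this is exactly where the hypothesis $N_{M^1,M^2}^{M^3}<\infty$ or $N_{N^1,N^2}^{N^3}<\infty$ enters, guaranteeing a genuine finite tensor decomposition of $\mu$ rather than a completed one. Identifying $\mu$ accordingly, and checking that the reconstructed data assemble into an honest intertwining operator (the only delicate axiom being lower truncation, which follows from boundedness of the gradings), exhibits $\mathcal{Y}$ in the image of $\Phi$.

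The main obstacle I anticipate is this final step: rigorously showing that compatibility with both the $V^1$- and the $V^2$-Jacobi identities forces the "un-entangled" tensor decomposition of $\mu$, and in particular deploying the finiteness hypothesis so that the joint constraint space is \emph{exactly} $I_{V^1}\ito{M^1}{M^2}{M^3}\otimes I_{V^2}\ito{N^1}{N^2}{N^3}$, with neither missing nor spurious directions.
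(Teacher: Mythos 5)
The paper never proves this proposition; it is quoted verbatim from \cite{ADL}, so there is no in-paper argument to compare against. On its own merits, your overall strategy --- the map $\Phi$ sending $\mathcal{Y}_1\otimes\mathcal{Y}_2$ to the product operator, injectivity from the fact that matrix coefficients of intertwining operators on homogeneous vectors are single monomials with weight-determined exponents, and surjectivity by decomposing the functional $\mu$ attached to an arbitrary $\mathcal{Y}$ --- is essentially the strategy of the published proof, and the injectivity half (hence the inequality $\geq$) is sound.

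The surjectivity argument, however, has a genuine gap, and it is not the one you flagged. The linear-algebra step you call decisive is unproblematic: if $U\subseteq A$ and $V\subseteq B$ denote the spans of the two separated constraint systems, the joint solution space is $\left(\left(A/U\right)\otimes\left(B/V\right)\right)^{*}$, which equals $\left(A/U\right)^{*}\otimes\left(B/V\right)^{*}$ as soon as one factor is finite dimensional. The real problem is your claim that the Jacobi identity for elements $a\otimes\mathbf{1}$ translates into ``precisely the linear relations defining $I_{V^1}\ito{M^1}{M^2}{M^3}$.'' It does not: the definition of an intertwining operator also demands the $L(-1)$-derivative property $\mathcal{Y}(L(-1)v,z)=\frac{d}{dz}\mathcal{Y}(v,z)$, and for the big operator this involves $L(-1)=L^{1}(-1)\otimes\mathrm{id}+\mathrm{id}\otimes L^{2}(-1)$, which couples the two tensor factors and cannot be recovered by specializing to $a\otimes\mathbf{1}$ or $\mathbf{1}\otimes b$. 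Jacobi-only constraint systems cut out the strictly larger space of ``quasi-intertwining operators'' --- for instance $z^{c}Y(\cdot,z)$ satisfies every Jacobi constraint but fails the derivative property --- and that space is typically infinite dimensional, so your finiteness hypothesis could not be brought to bear on it. The gap is fillable, but only via a lemma you never state: because your encoding forces each matrix coefficient to be the single monomial whose exponent is dictated by the weights, any solution of the separated Jacobi system automatically satisfies the grading relation $[L^{1}(0),u_{n}]=\left(\mathrm{wt}\,u-n-1\right)u_{n}$, and combining this with the commutator formula $[L^{1}(0),\mathcal{Y}_{1}(u,z)]=z\mathcal{Y}_{1}(L^{1}(-1)u,z)+\mathcal{Y}_{1}(L^{1}(0)u,z)$ (itself a consequence of the Jacobi identity with $\omega^{1}\otimes\mathbf{1}$) forces $\mathcal{Y}_{1}(L^{1}(-1)u,z)=\frac{d}{dz}\mathcal{Y}_{1}(u,z)$. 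Only with this lemma do the separated solution spaces coincide with $I_{V^1}\ito{M^1}{M^2}{M^3}$ and $I_{V^2}\ito{N^1}{N^2}{N^3}$, making $A/U$ finite dimensional and allowing the rest of your argument to run.
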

	
	Let $M^{1}$ and $M^{2}$ be $V$-modules. A fusion product for the
	ordered pair $\left(M^{1},M^{2}\right)$ is a pair $\left(M,F\left(\cdot,z\right)\right)$
	which consists of a $V$-module $M$ and an intertwining operator
	$F\left(\cdot,z\right)$ of type $\left(_{M^{1},M^{2}}^{\ \ M}\right)$
	such that the following universal property holds: For any $V$-module
	$W$ and any intertwining operator $I\left(\cdot,z\right)$ of type
	$\left(_{M^{1},M^{2}}^{\ \ W}\right)$, there exists a unique $V$-homomorphism
	$\phi$ from $M$ to $W$ such that $I\left(\cdot,z\right)=\phi\circ F\left(\cdot,z\right).$
	It is clear from the definition that if a tensor product of $M^{1}$
	and $M^{2}$ exists, it is unique up to isomorphism. In this case,
	we denote the \emph{fusion product} by $M^{1}\boxtimes_{V}M^{2}.$
	
	The basic result is that the fusion product exists if $V$ is rational.
	Let $M,N$ be irreducible $V$ -modules, we shall often consider the
	fusion product
	\[
	M\boxtimes_{V}N=\sum_{W}N_{M,\ N}^{W}W
	\]
	where $W$ runs over the set of equivalence classes of irreducible
	$V$-modules.
	
	\begin{definition} Let $V$ be a simple vertex operator algebra.
		A simple $V$-module $M$ is called \emph{simple current} if for
		any irreducible $V$-module $W$, $M\boxtimes_{V}W$ exists and is
		also a simple $V$-module. \end{definition}
	
	The following proposition is from \cite{FHL}:
	
	\begin{proposition} \label{extension property} Let $V$ be a vertex
		operator algebra and $V'$ be its restricted dual. For $u,v,w\in V$
		and $t\in V'$, we have the following equality of rational functions
		
		\begin{equation}
			\iota_{12}^{-1}\left\langle t,Y\left(u,z_{1}\right)Y\left(v,z_{2}\right)w\right\rangle =\iota_{21}^{-1}\left\langle t,Y\left(v,z_{2}\right)Y\left(u,z_{1}\right)w\right\rangle \label{communitivity}
		\end{equation}
		
		\begin{equation}
			\iota_{12}^{-1}\left\langle t,Y\left(u,z_{1}\right)Y\left(v,z_{2}\right)w\right\rangle =\left(\iota_{20}^{-1}\left\langle t,Y\left(Y\left(u,z_{0}\right)v,z_{2}\right)w\right\rangle \right)|_{z_{0}=z_{1}-z_{2}}\label{associavity}
		\end{equation}
		where $\iota_{12}^{-1}f\left(z_{1},z_{2}\right)$ denotes the formal
		power expansion of an analytic function $f\left(z_{1},z_{2}\right)$
		in the domain $\left|z_{1}\right|>\left|z_{2}\right|$ . \end{proposition}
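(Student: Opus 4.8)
The plan is to deduce both identities from the Jacobi identity for $V$ together with the $L(0)$-grading and the truncation axiom, following the duality argument of \cite{FHL}. By bilinearity it suffices to treat homogeneous $u,v,w$ and $t\in V_{\lambda}^{*}$ for a single weight $\lambda$; the grading then forces every monomial $z_{1}^{i}z_{2}^{j}$ occurring in $\langle t,Y(u,z_{1})Y(v,z_{2})w\rangle$ to have one fixed total degree $i+j$, so all the formal series in sight are supported on a single anti-diagonal, a fact I will exploit repeatedly.

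First I would establish \emph{locality} (weak commutativity). Taking $\mathrm{Res}_{z_{0}}$ of the Jacobi identity produces the commutator formula, which by truncation of $Y(u,z_{0})v$ (that is, $u_{n}v=0$ for $n\ge N$) is a finite sum of derivatives of $\delta$-functions in $z_{1},z_{2}$; multiplying by $(z_{1}-z_{2})^{N}$ annihilates each such term via $(z_{1}-z_{2})^{j+1}\partial_{z_{2}}^{j}\delta=0$, giving $(z_{1}-z_{2})^{N}Y(u,z_{1})Y(v,z_{2})=(z_{1}-z_{2})^{N}Y(v,z_{2})Y(u,z_{1})$ in $\mathrm{End}(V)[[z_{1}^{\pm1},z_{2}^{\pm1}]]$. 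Pairing with $t$ and $w$, the truncation axiom shows $z_{2}^{N}Y(v,z_{2})w\in V[[z_{2}]]$, and for each of its coefficients the grading pins down a single power of $z_{1}$; hence $F:=\langle t,Y(u,z_{1})Y(v,z_{2})w\rangle$ is a power series in $z_{2}/z_{1}$ (convergent for $|z_{1}|>|z_{2}|$) while, symmetrically, $G:=\langle t,Y(v,z_{2})Y(u,z_{1})w\rangle$ is a power series in $z_{1}/z_{2}$ (convergent for $|z_{2}|>|z_{1}|$).

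The key step is then purely formal. By locality the two series $(z_{1}-z_{2})^{N}F$ and $(z_{1}-z_{2})^{N}G$ coincide; call the common series $\phi$. Since $F$ has $z_{1}$-exponents bounded above and $G$ has $z_{2}$-exponents bounded above, and $\phi$ lives on a single anti-diagonal, the exponents of $\phi$ are bounded in both directions, so $\phi\in\mathbb{C}[z_{1}^{\pm1},z_{2}^{\pm1}]$ is a Laurent polynomial. Writing $\phi=g(z_{1},z_{2})/(z_{1}^{r}z_{2}^{s})$ with $g$ a polynomial, I conclude that $F$ and $G$ are exactly the expansions of the single rational function $f=g/(z_{1}^{r}z_{2}^{s}(z_{1}-z_{2})^{N})$ in the domains $|z_{1}|>|z_{2}|$ and $|z_{2}|>|z_{1}|$, respectively, which is precisely \eqref{communitivity}.

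Finally, for \eqref{associavity} I would run the parallel argument for the iterate: weak associativity, extracted from the Jacobi identity (using truncation of $Y(u,z_{0})v$ to bound negative powers of $z_{0}$), gives $(z_{0}+z_{2})^{l}Y(Y(u,z_{0})v,z_{2})w=(z_{0}+z_{2})^{l}Y(u,z_{0}+z_{2})Y(v,z_{2})w$ for a suitable $l$, so $\langle t,Y(Y(u,z_{0})v,z_{2})w\rangle$ is the expansion in $|z_{2}|>|z_{0}|$ of a rational function with poles only along $z_{0}=0$, $z_{2}=0$, $z_{0}+z_{2}=0$; substituting $z_{0}=z_{1}-z_{2}$ identifies it with $f$ and yields the stated equality. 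Equivalently, all three expansions can be read off at once from the three-term $\delta$-function form of the Jacobi identity, the delta functions encoding the coincidence $z_{0}=z_{1}-z_{2}$. The main obstacle is the formal-calculus bookkeeping in the key step, namely justifying that clearing the $(z_{1}-z_{2})^{N}$ denominator converts two one-sided expansions into a genuine Laurent polynomial and controlling the pole orders $r,s$, together with the upstream task of extracting weak commutativity and weak associativity cleanly from the $\delta$-function calculus.
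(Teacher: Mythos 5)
Your proposal is correct: it reconstructs the standard formal-calculus duality argument (anti-diagonal support of the matrix element forced by the grading, weak commutativity and weak associativity extracted from the Jacobi identity plus truncation, and the key step identifying the two one-sided expansions with a common rational function once clearing $(z_1-z_2)^N$ yields a Laurent polynomial). Note, however, that the paper gives no proof of this proposition at all---it is imported verbatim from \cite{FHL}---so your argument is in effect the proof of the cited source; the one point worth making explicit is that passing from $(z_1-z_2)^N F=\phi$ to $F=\iota_{12}\bigl(\phi/(z_1-z_2)^N\bigr)$ uses injectivity of multiplication by $(z_1-z_2)^N$ on series whose $z_1$-exponents are bounded above (the kernel consists of delta-function-type series with exponents unbounded in both directions), which is exactly what your bounded-exponent observation supplies.
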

	
	The following result about bilinear form on $V$ is from \cite{Li}:
	
	\begin{theorem}\label{bilinear form}The space of invariant bilinear
		forms on $V$ is isomorphic to the space
		\[
		\left(V_{0}/L\left(1\right)V_{1}\right)^{*}=\mbox{Hom}_{\mathbb{C}}\left(V_{0}/L\left(1\right)V_{1},\mathbb{C}\right).
		\]
	\end{theorem}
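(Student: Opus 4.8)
The plan is to identify invariant bilinear forms with module homomorphisms from $V$ to its contragredient $V'$, and then to count the latter by evaluating at the vacuum. First I would show that assigning to an invariant bilinear form $B$ the linear map $\Psi_{B}\colon V\to V'$ defined by $\langle \Psi_{B}(u),v\rangle = B(u,v)$ sets up a linear isomorphism between the space of invariant bilinear forms on $V$ and $\mbox{Hom}_{V}(V,V')$. Indeed, writing $Y^{o}(a,z)=Y(e^{zL(1)}(-z^{-2})^{L(0)}a,z^{-1})$ for the adjoint vertex operator appearing in the contragredient construction of $V'$ recalled above, the invariance of $B$ reads $B(Y(a,z)u,v)=B(u,Y^{o}(a,z)v)$, which is precisely the statement that $\Psi_{B}$ intertwines $Y$ on $V$ with $Y_{V'}$ on $V'$. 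This step is purely formal and reversible, so it reduces the theorem to computing $\mbox{Hom}_{V}(V,V')$.

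Next I would exploit that the vacuum generates $V$: for any $a\in V$ one has $a=a_{-1}\mathbf{1}$ by the creation property, so a module homomorphism $\Psi$ satisfies $\Psi(a)=a_{-1}\Psi(\mathbf{1})$, where $a_{-1}$ denotes the relevant mode of $Y_{V'}(a,z)$, and is therefore determined by the single vector $\Psi(\mathbf{1})\in V'$. Since $\Psi$ commutes with $L(0)$ and $\mathbf{1}\in V_{0}$, the image $\Psi(\mathbf{1})$ lies in $(V')_{0}=V_{0}^{*}$. Moreover, applying $\Psi$ to the relation $L(-1)\mathbf{1}=0$ and using that on the contragredient module $\langle L'(-1)f,u\rangle=\langle f,L(1)u\rangle$, I obtain $\Psi(\mathbf{1})(L(1)u)=0$ for all $u$; by the grading this says exactly that $\Psi(\mathbf{1})$ annihilates $L(1)V_{1}$. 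Hence $\Psi\mapsto\Psi(\mathbf{1})$ gives a well-defined injective linear map $\mbox{Hom}_{V}(V,V')\to(V_{0}/L(1)V_{1})^{*}$.

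It remains to prove surjectivity: given $\phi\in(V_{0}/L(1)V_{1})^{*}$, lift it to a functional on $V_{0}$, extend it by zero on all $V_{n}$ with $n\neq 0$ to obtain $\psi\in V'$, and define $\Psi(a)=a_{-1}\psi$. One must then check that this $\Psi$ is a genuine $V$-module homomorphism, equivalently that the resulting form $B(u,v)=\langle\Psi(u),v\rangle$ satisfies the full invariance identity. I expect this to be the main obstacle. The mechanism is an induction on weight: using $b_{n}a=[b_{n},a_{-1}]\mathbf{1}$ for $n\geq 0$ together with the commutator formula for modes, the homomorphism property $\Psi(b_{n}a)=b_{n}\Psi(a)$ for arbitrary modes is reduced to the single compatibility $\psi(L(1)V_{1})=0$, which is precisely the condition built into $\phi$; the commutativity and associativity of matrix coefficients from Proposition \ref{extension property} control the formal-variable manipulations and guarantee that $\Psi$ respects all products consistently. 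Finally, I would record that every invariant form arising this way is automatically symmetric, via the skew-symmetry relation $Y(u,z)v=e^{zL(-1)}Y(v,-z)u$, so that the count is unaffected. Composing the isomorphisms of the three previous steps then yields the desired identification.
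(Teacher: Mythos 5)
First, a point of comparison that matters for this review: the paper itself contains no proof of Theorem \ref{bilinear form} --- it is quoted from \cite{Li} (H.~Li, \emph{Symmetric invariant bilinear forms on vertex operator algebras}), so your proposal has to be measured against Li's original argument. Your plan reconstructs precisely that argument: invariant forms correspond to $\mathrm{Hom}_{V}(V,V')$, a homomorphism is determined by $\Psi(\mathbf{1})$, and $\Psi(\mathbf{1})$ is a degree-zero functional annihilating $L(1)V_{1}$. Your first two steps (the correspondence $B\leftrightarrow\Psi_{B}$, and the well-definedness and injectivity of $\Psi\mapsto\Psi(\mathbf{1})$) are correct as stated, modulo the routine remark that an invariant form pairs $V_{m}$ with $V_{n}$ only when $m=n$, so that $\Psi_{B}(u)$ indeed lands in the restricted dual $V'$.

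The gap is exactly where you predicted it, but the mechanism you propose does not close it. The decisive input for surjectivity is not ``induction on weight with the commutator formula''; it is the vacuum-like vector lemma. Concretely: extending $\phi$ by zero gives $L'(-1)\psi=0$ (note that this requires $\psi(L(1)V_{n+1})=0$ for \emph{every} $n$, not only $n=1$; this is automatic since $L(1)V_{n+1}\subseteq V_{n}$ and $\psi$ vanishes outside degree $0$, but it must be said). From $L'(-1)\psi=0$ one must then deduce that $a_{n}\psi=0$ for all $a\in V$ and all $n\geq0$: this follows by downward induction, using $[L'(-1),a_{n}]=(L(-1)a)_{n}=-n\,a_{n-1}$ to get $a_{n-1}\psi=-\tfrac{1}{n}L'(-1)(a_{n}\psi)$ for $n\geq 1$, together with lower truncation of $V'$. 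Only with this vanishing in hand does the homomorphism property follow: the iterate (associativity) formula for $(b_{m}a)_{-1}$ applied to $\psi$ collapses to the single term $b_{m}a_{-1}\psi$, because every other summand ends with some $a_{j}$ or $b_{j}$, $j\geq0$, acting directly on $\psi$. Without this lemma, the identity $\Psi(b_{m}a)=b_{m}\Psi(a)$ cannot be ``reduced to the single compatibility $\psi(L(1)V_{1})=0$'' by commutator manipulations --- the commutator formula never produces that reduction --- and Proposition \ref{extension property}, as stated in the paper, concerns matrix coefficients of $Y$ on $V$ itself, not operators acting on the module $V'$, so it cannot be cited for these manipulations without first being upgraded to its module version. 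The same lemma is also what guarantees that vacuum-like vectors of $V'$ are automatically concentrated in degree $0$ (since $L'(0)\psi=\omega_{1}\psi=0$), which is why the answer is $\left(V_{0}/L(1)V_{1}\right)^{*}$ and not $\bigoplus_{n}\left(V_{n}/L(1)V_{n+1}\right)^{*}$; your sketch silently relies on this fact as well.
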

	
	\subsection{Quantum Galois theory}
	
	Now we recall quantum Galois theory and quantum dimensions from \cite{DM}
	and \cite{DJX}. Let $V$ be a simple vertex operator algebra and
	$G$ a finite and faithful group of automorphisms of $V$. Let $\text{Irr}\left(G\right)$
	be the set of simple characters $\chi$ of $G$. As $\mathbb{C}G$-module,
	each homogeneous space $V_{n}$ of $V$ is finite dimensional, and
	$V$ can be decomposed into a direct sum of graded subspaces
	\[
	V=\oplus_{\chi\in\text{Irr}\left(G\right)}V^{\chi},
	\]
	where $V^{\chi}$ is the subspace of $V$ on which $G$ acts according
	to the character $\chi$. The following theorem is from \cite{DM,DLM4}. 
	
	\begin{theorem} \label{classical galois theory}Suppose that $V$
		is a simple vertex operator algebra and that $G$ is a finite and
		faithful solvable group of automorphisms of $V$. Then the following
		hold:
		
		(i) Each $V^{\chi}$ is nonzero;
		
		(ii) For $\chi\in\text{Irr}\left(G\right)$, each $V^{\chi}$ is a
		simple module for the $G$-graded vertex operator algebra $\mathbb{C}G\otimes V^{G}$
		of the form
		
		\[
		V^{\chi}=M_{\chi}\otimes V_{\chi},
		\]
		where $M_{\chi}$ is the simple $\mathbb{C}G$-module affording $\chi$
		and where $V_{\chi}$ is a simple $V^{G}$-module.
		
		(iii) The map $M_{\chi}\mapsto V_{\chi}$ is a bijection from the
		set of simple $\mathbb{C}G$-modules to the set of (inequivalent)
		simple $V^{G}$-modules which are contained in $V$.
		
	\end{theorem}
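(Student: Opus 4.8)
The plan is to recast the three assertions as a double-centralizer (Schur--Weyl type) duality between the group algebra $\mathbb{C}G$ and the algebra of modes coming from $V^{G}$, and to reserve the solvability hypothesis for the non-vanishing statement (i) alone. Since $G$ is finite, each homogeneous space $V_{n}$ is a finite-dimensional $\mathbb{C}G$-module and $V$ is completely reducible over $\mathbb{C}G$, so it decomposes as $V=\bigoplus_{\chi\in\mathrm{Irr}(G)}M_{\chi}\otimes V_{\chi}$ with multiplicity space $V_{\chi}=\mathrm{Hom}_{\mathbb{C}G}(M_{\chi},V)$ and isotypic component $V^{\chi}=M_{\chi}\otimes V_{\chi}$. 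Every $g\in G$ fixes each $u\in V^{G}$ and therefore commutes with every mode $u_{n}$; consequently the modes $\{u_{n}:u\in V^{G},\,n\in\mathbb{Z}\}$ preserve each $V_{\chi}$ and endow it with the structure of a $V^{G}$-module, realizing $V^{\chi}$ as a $\mathbb{C}G\otimes V^{G}$-module. In this language (ii) says each nonzero $V_{\chi}$ is simple over $V^{G}$, (iii) says distinct $\chi$ yield inequivalent $V_{\chi}$, and (i) says no $V_{\chi}$ vanishes.

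For (ii) and (iii) I would establish the double-commutant identity $\mathcal{A}=\mathrm{End}_{\mathbb{C}G}(V)$, where $\mathcal{A}\subseteq\mathrm{End}(V)$ is the associative subalgebra generated by $\{u_{n}:u\in V^{G},\,n\in\mathbb{Z}\}$. The inclusion $\mathcal{A}\subseteq\mathrm{End}_{\mathbb{C}G}(V)$ is the content of the previous paragraph. For the reverse inclusion the simplicity of $V$ is the decisive input: using the commutativity and associativity of vertex operators (Proposition \ref{extension property}) together with the nondegenerate invariant pairing furnished by Theorem \ref{bilinear form}, one shows that $\mathcal{A}$ acts densely, i.e. that no proper $G$-stable subspace is $\mathcal{A}$-invariant, which forces $\mathcal{A}$ to exhaust the centralizer $\mathrm{End}_{\mathbb{C}G}(V)$. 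Granting this, both $\mathbb{C}G$ and $\mathcal{A}=\mathrm{End}_{\mathbb{C}G}(V)$ are mutually centralizing semisimple algebras acting on $V$, and the classical double-centralizer theorem immediately gives that each nonzero $V_{\chi}$ is a simple $\mathcal{A}$-module---hence a simple $V^{G}$-module, since $\mathcal{A}$-submodules and $V^{G}$-submodules coincide---and that $\chi\mapsto V_{\chi}$ is injective on the $\chi$ that occur. This yields (ii) and (iii).

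For the non-vanishing (i) I would induct on $|G|$, using solvability to choose a normal subgroup $N\trianglelefteq G$ with $G/N$ cyclic of prime order $p$. Since the fixed-point subalgebra of a simple vertex operator algebra under a finite automorphism group is again simple, $V^{N}$ is a simple vertex operator algebra carrying a faithful action of $G/N$, and the inductive step reduces the problem to a cyclic group $G=\langle g\rangle$ of prime order $p$. Decomposing $V=\bigoplus_{r=0}^{p-1}V^{r}$ into the eigenspaces $V^{r}=\{v:gv=e^{2\pi ir/p}v\}$, one has $u_{n}V^{s}\subseteq V^{r+s}$ for $u\in V^{r}$, so the support $S=\{r:V^{r}\neq 0\}$ is closed under negation (the invariant form pairs $V^{r}$ nondegenerately with $V^{-r}$) and under addition (in a simple vertex operator algebra $Y(u,z)w\neq 0$ whenever $u,w\neq 0$); thus $S$ is a subgroup of $\mathbb{Z}/p\mathbb{Z}$. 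As $g$ is a nontrivial automorphism we have $S\neq\{0\}$, and since $p$ is prime this forces $S=\mathbb{Z}/p\mathbb{Z}$, so every eigenspace, and hence every isotypic component $V^{\chi}$, is nonzero.

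The principal obstacle is the reverse inclusion $\mathrm{End}_{\mathbb{C}G}(V)\subseteq\mathcal{A}$ in the double-commutant step, where the abstract simplicity of the vertex operator algebra must be converted into a concrete density property of the operator algebra $\mathcal{A}$; this is exactly the point that requires genuine vertex-algebraic input (skew-symmetry, associativity, and the invariant form) rather than formal representation theory. I would also emphasize that solvability is used essentially in (i): for a general finite group the cyclic-reduction argument breaks down, and the non-vanishing of every $V_{\chi}$ requires the deeper analysis carried out in \cite{DM,DLM4}.
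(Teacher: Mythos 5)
First, a point of reference: the paper itself contains no proof of Theorem \ref{classical galois theory} --- it is quoted verbatim from \cite{DM,DLM4} --- so your proposal can only be measured against what those cited proofs actually require, and by that measure it has genuine gaps. The central one is in your treatment of (ii) and (iii). The identity $\mathcal{A}=\mathrm{End}_{\mathbb{C}G}(V)$ you propose to prove is false as stated: every element of $\mathcal{A}$ is a finite sum of finite products of modes of homogeneous vectors, hence maps $V_{n}$ into $\bigoplus_{s\in S}V_{n+s}$ for a finite set $S$ of shifts independent of $n$, whereas $\mathrm{End}_{\mathbb{C}G}(V)$ contains operators of unbounded spread (already for $G=1$, $\mathrm{End}(V)$ is far larger than the algebra generated by modes). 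What is actually needed is a Jacobson-density statement --- for linearly independent $v^{1},\dots,v^{k}\in V$ and arbitrary targets $u^{1},\dots,u^{k}$ there exists $a\in\mathcal{A}$ with $av^{i}=u^{i}$ --- and obtaining this for modes of $V^{G}$ (from simplicity of $V$ plus a nontrivial averaging argument over $G$) is precisely the technical heart of \cite{DM,DLM4}. Your substitute justification, that ``no proper $G$-stable subspace is $\mathcal{A}$-invariant,'' is both unproved and strictly weaker than density; moreover the double-centralizer theorem you invoke also needs semisimplicity of $V$ as an $\mathcal{A}$-module, which you never establish. You candidly flag this step as ``the principal obstacle,'' but that obstacle \emph{is} the theorem: what you supply around it is formal bookkeeping.

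The induction for (i) also has two missing pieces. First, passing from the inductive hypotheses (the result for $N$ acting on $V$, and for $G/N$ acting on $V^{N}$) to the conclusion for $G$ acting on $V$ is not a formal reduction: irreducible characters of $G$ do not factor through $N$ and $G/N$, so one needs Clifford theory --- restrict $\chi\in\mathrm{Irr}(G)$ to $N$, pick $\theta\in\mathrm{Irr}(N)$ occurring in the restriction, use $V^{\theta}\neq 0$, and argue that the $G$-submodule generated by $V^{\theta}$ contains a nonzero $\chi$-component; none of this appears in your sketch, and it is where most of the work in \cite{DM} lies. Second, you need the $G/N$-action on $V^{N}$ to be faithful, which is not automatic, and you quietly invoke the simplicity of $V^{N}$, which is itself the trivial-character case of (ii); so the logical order (duality first, for arbitrary finite groups, then the solvable induction) must be made explicit, and as it stands your (ii) rests on the gapped double-commutant step. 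On the positive side, your cyclic prime-order argument for (i) is correct and clean; note only that the appeal to a nondegenerate invariant form is both unjustified (a simple vertex operator algebra need not admit one unless $V_{0}=\mathbb{C}\mathbf{1}$ and $L(1)V_{1}=0$, cf.\ Theorem \ref{bilinear form}) and unnecessary, since closure of the support $S$ under addition, together with $0\in S$ and $S\neq\{0\}$, already forces $S=\mathbb{Z}/p\mathbb{Z}$ when $p$ is prime.
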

	
	Now we recall the notion of quantum dimension from \cite{DJX}. Let
	$V$ be a vertex operator algebra of CFT type and $M$ a $V$-module,
	the formal character of $M=\oplus_{n\in\mathbb{Z}_{+}}M_{\lambda+n}$
	is defined to be
	\[
	Ch_{q}M=\mbox{tr}_{q}M=\text{tr}q^{L(0)-c/24}=q^{\lambda-c/24}\sum_{n\in\mathbb{Z}_{+}}(\dim M_{\lambda+n})q^{n}
	\]
	where $\lambda$ is the conformal weight of $M$. The quantum dimension
	of $M$ over $V$ is defined as:
	\[
	q\dim_{V}M=\lim_{q\to1^{-}}\frac{Ch_{q}M}{Ch_{q}V}.
	\]
	The following result is from Theorem 6.3 in \cite{DJX}:
	
	\begin{theorem} \label{quantum dimension and orbifold module}Let
		$V$ be a rational and $C_{2}$-cofinite simple vertex operator algebra.
		Assume $V$ is $g$-rational and the conformal weight of any irreducible
		$g$-twisted $V$-module is positive except for $V$ itself for all
		$g\in G$. Then
		\[
		q\dim_{V^{G}}V_{\chi}=\dim W_{\chi}.
		\]
	\end{theorem}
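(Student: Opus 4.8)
The plan is to compute $Ch_q V_\chi$ and $Ch_q V^G$ explicitly as $G$-graded trace functions and then to control their ratio as $q\to 1^-$ using modular invariance. First I would record the two character identities coming from group-algebra projections. Writing $Z(g,q)=\operatorname{tr}_V g\,q^{L(0)-c/24}$ for $g\in G$, the projection $\frac1{|G|}\sum_{g}g$ onto $V^G$ gives
\[
Ch_q V^G=\frac1{|G|}\sum_{g\in G}Z(g,q),
\]
while the central idempotent $\frac{\chi(1)}{|G|}\sum_{g}\overline{\chi(g)}\,g$ cutting out the isotypic component $V^\chi\cong W_\chi\otimes V_\chi$, together with the decomposition $Ch_q V^\chi=\chi(1)\,Ch_q V_\chi$ coming from Theorem \ref{classical galois theory}, yields
\[
Ch_q V_\chi=\frac1{|G|}\sum_{g\in G}\overline{\chi(g)}\,Z(g,q).
\]
Hence $q\dim_{V^G}V_\chi=\lim_{q\to1^-}\bigl(\sum_g\overline{\chi(g)}Z(g,q)\bigr)\big/\bigl(\sum_g Z(g,q)\bigr)$, and everything reduces to the asymptotics of the individual $Z(g,q)$.

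The heart of the argument is the claim that $Z(g,q)/Z(1,q)\to 0$ as $q\to1^-$ for every $g\neq 1$. Here I would invoke modular invariance of trace functions in orbifold theory, valid since $V$ is rational, $C_2$-cofinite and $g$-rational: setting $q=e^{2\pi i\tau}$, the $S$-transformation $\tau\mapsto-1/\tau$ sends $Z(g,\tau)=Z\binom{1}{g}(\tau)$ to a finite linear combination of graded traces over irreducible $g'$-twisted $V$-modules, with $g'\neq1$ whenever $g\neq1$. Examining the behaviour as $\tau\to0^+$, equivalently $-1/\tau\to i\infty$, the dominant contribution of such a twisted trace is $q'^{\,\lambda-c/24}$, where $\lambda$ is the minimal conformal weight among the relevant twisted modules; by hypothesis $\lambda>0$. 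For $Z(1,\tau)=Ch_q V$ the corresponding minimal weight is $0$, attained uniquely by $V$ itself, so its leading term is $q'^{-c/24}$ with nonzero (and in fact positive) coefficient $S_{V,V}$. Writing $\tau=it$ and comparing exponentials, one finds $|Z(g,q)|/Z(1,q)$ bounded by a constant times $e^{-(2\pi/t)\lambda}\to 0$; the exponential suppression swamps the bounded $S$-matrix coefficients, so no control on the sign of individual coefficients of $Z(g,q)$ is needed.

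Granting the claim, I would divide numerator and denominator of the ratio above by $Z(1,q)$ and pass to the limit: every $g\neq1$ term dies, leaving
\[
q\dim_{V^G}V_\chi=\frac{\overline{\chi(1)}+0}{1+0}=\chi(1)=\dim W_\chi,
\]
as desired. Taking $\chi$ trivial recovers $q\dim_{V^G}V=|G|$, a useful consistency check since $\sum_\chi(\dim W_\chi)^2=|G|$. I expect the main obstacle to be the rigorous justification of the second step: one must cite the precise modular transformation law for the two-variable trace functions $Z\binom{1}{g}$, confirm convergence of all the series on the relevant domains, and verify that the leading asymptotics may be read off term-by-term from the $q'\to0$ expansion after the $S$-transform. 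The nonvanishing of $S_{V,V}$ and the fact that $V$ is the unique conformal-weight-$0$ module make the denominator's growth strictly dominate, while the positivity hypothesis on the conformal weights of all nontrivially twisted modules is exactly what forces strict domination for every $g\neq1$ simultaneously.
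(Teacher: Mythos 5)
This theorem is not proved in the paper at all: it is imported verbatim from \cite{DJX} (Theorem 6.3 there), so the only proof to compare with is the original Dong--Jiao--Xu argument, and your proposal reproduces it essentially step for step --- the idempotent identities $Ch_{q}V^{G}=\frac{1}{|G|}\sum_{g\in G}Z(g,q)$ and $Ch_{q}V_{\chi}=\frac{1}{|G|}\sum_{g\in G}\overline{\chi(g)}Z(g,q)$ with $Z(g,q)=\operatorname{tr}_{V}g\,q^{L(0)-c/24}$, the reduction to showing $Z(g,q)/Z(1,q)\to 0$ as $q\to 1^{-}$ for $g\neq 1$, and the proof of that vanishing via Dong--Li--Mason modular invariance of (twisted) trace functions together with the positivity hypothesis on the conformal weights of nontrivially twisted modules. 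The external facts you flag as needing citation (the exact $S$-transformation law and convergence of twisted trace functions, and the nonvanishing $S_{V,V}\neq 0$ of the leading $S$-matrix entry) are precisely the inputs the original proof also relies on, so your argument is correct and follows the same route as the source the paper cites.
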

	
	For convenience, from now on, we say a vertex operator algebra $V$
	is ``good'' if it satisfies the following conditions: $V$ is a
	rational and $C_{2}$-cofinite simple vertex operator algebra of CFT
	type with $V\cong V'$. Let $M^{0},\ M^{1},\ \cdots,\ M^{d}$ be all
	the inequivalent irreducible $V$-modules with $M^{0}\cong V$ . The
	corresponding conformal weights $\lambda_{i}$ satisfy $\lambda_{i}>0$
	for $0<i\le d$.
	
	The following properties of quantum dimensions are from \cite{DJX}
	:
	
	\begin{proposition} \label{product property of quantum dimension }\label{quantum dimension and simple current}
		Let $V$ be a ``good'' vertex operator algebra. Then
		
		(i) $q\dim_{V}\left(M^{i}\boxtimes M^{j}\right)=q\dim_{V}M^{i}\cdot q\dim_{V}M^{j},$
		$\forall i,j$.
		
		(ii) A $V$-module $M^{i}$ is a simple current if and only if $q\dim_{V}M^{i}=1$.
		
		(iii) $q\dim_{V}M^{i}\in\left\{ 2\cos\left(\pi/n\right)|n\ge3\right\} \cup\left\{ a|2\le a<\infty,a\ is\ algebraic\right\} .$
		
	\end{proposition}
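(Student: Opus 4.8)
The plan is to reduce all three statements to a single identity expressing the quantum dimension in terms of the modular $S$-matrix. Since $V$ is rational, $C_{2}$-cofinite and of CFT type, Zhu's modular invariance guarantees that the normalized characters $Z_{i}(\tau)=Ch_{q}M^{i}$ with $q=e^{2\pi i\tau}$ span a finite-dimensional representation of $SL_{2}(\mathbb{Z})$, with a symmetric invertible matrix $S=(S_{i,j})$ implementing $\tau\mapsto-1/\tau$. First I would analyze the limit $q\to1^{-}$, i.e. $\tau=it$ with $t\to0^{+}$: applying the $S$-transformation turns this into the cusp $\tilde\tau=i/t\to i\infty$, where each $Z_{j}$ is dominated by its leading power of $\tilde q=e^{2\pi i\tilde\tau}$, namely $\tilde q^{\,\lambda_{j}-c/24}$. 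Because $V$ is ``good'' we have $\lambda_{0}=0$ and $\lambda_{j}>0$ for $j>0$, so the vacuum module alone controls the leading asymptotics, and dividing by $Ch_{q}V$ yields
\[
q\dim_{V}M^{i}=\frac{S_{i,0}}{S_{0,0}}.
\]
In particular each $q\dim_{V}M^{i}$ is a positive real number.

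With this formula in hand, part (i) is exactly the statement that $M^{i}\mapsto S_{i,0}/S_{0,0}$ is multiplicative on the fusion ring. I would invoke the Verlinde formula (valid here by Huang's theorem) $N_{i,j}^{k}=\sum_{l}S_{i,l}S_{j,l}\overline{S_{k,l}}/S_{0,l}$, which says precisely that the fusion matrices $N_{i}=(N_{i,j}^{k})_{j,k}$ are simultaneously diagonalized by $S$ with eigenvalues $S_{i,l}/S_{0,l}$. The column $l=0$ therefore furnishes a one-dimensional representation of the commutative fusion ring, so that $q\dim_{V}(M^{i}\boxtimes M^{j})=\sum_{k}N_{i,j}^{k}\,q\dim_{V}M^{k}=q\dim_{V}M^{i}\cdot q\dim_{V}M^{j}$.

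For part (ii) I would first record that $q\dim_{V}M^{i}\ge1$. Since the fusion rule $N_{i,i'}^{0}=1$ for the contragredient $M^{i'}=(M^{i})'$, and $q\dim_{V}M^{i'}=q\dim_{V}M^{i}$ (the dual has the same character, equivalently $S_{i',0}=\overline{S_{i,0}}=S_{i,0}$), part (i) gives $(q\dim_{V}M^{i})^{2}=q\dim_{V}(M^{i}\boxtimes M^{i'})=\sum_{k}N_{i,i'}^{k}\,q\dim_{V}M^{k}\ge q\dim_{V}V=1$, whence $q\dim_{V}M^{i}\ge1$. If $M^{i}$ is a simple current its inverse $M^{i'}$ satisfies $M^{i}\boxtimes M^{i'}=V$, so $q\dim_{V}M^{i}\cdot q\dim_{V}M^{i'}=1$ and both factors, being $\ge1$, equal $1$. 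Conversely, if $q\dim_{V}M^{i}=1$ then $q\dim_{V}(M^{i}\boxtimes M^{i'})=1$; expanding this as $\sum_{k}N_{i,i'}^{k}\,q\dim_{V}M^{k}$ with every summand $\ge1$ and with the $k=0$ term already contributing $1$, all other coefficients must vanish, so $M^{i}\boxtimes M^{i'}=V$. Thus $M^{i}$ is invertible in the fusion ring, which is exactly the simple-current property.

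Finally, part (iii) follows from the arithmetic of the non-negative integer matrix $N_{i}$. Its entries are the fusion rules $N_{i,j}^{k}\in\mathbb{Z}_{\ge0}$, and $q\dim_{V}M^{i}=S_{i,0}/S_{0,0}$ is its Perron--Frobenius eigenvalue (the strictly positive eigenvector being the vector of quantum dimensions), hence a real algebraic integer $\ge1$. If $q\dim_{V}M^{i}\ge2$ nothing further is claimed. The delicate case is $1\le q\dim_{V}M^{i}<2$: here I would appeal to the classification of non-negative integer matrices of spectral radius below $2$ --- equivalently the Jones index restriction together with the $ADE$ spectral classification of graphs --- to conclude the value is forced to be $2\cos(\pi/n)$ for some integer $n\ge3$. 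I expect this last step to be the main obstacle, since, unlike (i) and (ii), it is not formal: it rests on the deep number-theoretic input of Jones' theorem rather than on the Verlinde formula alone, so a fully self-contained treatment would have to reproduce that classification.
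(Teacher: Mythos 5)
The paper itself never proves this proposition: it is quoted from [DJX] ("The following properties of quantum dimensions are from [DJX]"), so your attempt can only be measured against that source's argument. For parts (i) and (ii) your route is essentially the one in [DJX] and is sound: the formula $q\dim_{V}M^{i}=S_{i,0}/S_{0,0}$ from Zhu's modular invariance together with the weight positivity built into the ``good'' hypothesis, Huang's Verlinde formula for multiplicativity, and the bound $q\dim_{V}M^{i}\ge 1$ combined with invertibility for the simple-current criterion. One step you should make explicit in (ii): passing from ``$M^{i}$ is invertible in the fusion ring'' to ``$M^{i}$ is a simple current'' requires showing that $M^{i}\boxtimes W$ is simple for every irreducible $W$; this follows from associativity of $\boxtimes$ (available by Huang's tensor category theory), since $M^{i'}\boxtimes\left(M^{i}\boxtimes W\right)\cong W$ forces $M^{i}\boxtimes W$ to have exactly one simple summand.

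Part (iii), however, contains a genuine gap. You appeal to ``the classification of non-negative integer matrices of spectral radius below $2$,'' but no such classification exists. For example, the non-negative integer matrix
\[
A=\begin{pmatrix}0&1&0\\0&0&1\\2&0&0\end{pmatrix}
\]
has characteristic polynomial $\lambda^{3}-2$, hence spectral radius $2^{1/3}<2$, attained on the strictly positive eigenvector $\left(1,2^{1/3},2^{2/3}\right)^{\mathrm{T}}$; yet $2^{1/3}$ is not of the form $2\cos\left(\pi/n\right)$. The Kronecker/Jones/ADE restriction concerns the \emph{operator norm} of an integer matrix (equivalently, spectral radii of symmetric ones), and for non-symmetric matrices $\rho(A)$ can be strictly smaller than $\left\Vert A\right\Vert$ (here $\left\Vert A\right\Vert=2$, since $A^{\mathrm{T}}A=\mathrm{diag}(4,1,1)$). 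So knowing $q\dim_{V}M^{i}=\rho\left(N_{i}\right)$ is not enough; the missing idea is to prove $\rho\left(N_{i}\right)=\left\Vert N_{i}\right\Vert$, and this is exactly where the duality structure of the fusion ring must be used. The contragredient symmetry of fusion rules gives $N_{i'}=N_{i}^{\mathrm{T}}$, so $N_{i}^{\mathrm{T}}N_{i}=N_{i'}N_{i}=\sum_{k}N_{i',i}^{k}N_{k}$ is a \emph{symmetric} non-negative integer matrix which has the strictly positive vector of quantum dimensions as an eigenvector with eigenvalue $q\dim_{V}\left(M^{i'}\boxtimes M^{i}\right)=\left(q\dim_{V}M^{i}\right)^{2}$; Perron--Frobenius then yields $\left\Vert N_{i}\right\Vert^{2}=\rho\left(N_{i}^{\mathrm{T}}N_{i}\right)=\left(q\dim_{V}M^{i}\right)^{2}$. (Equivalently: commutativity of the fusion ring makes $N_{i}$ a normal matrix, whence $\rho\left(N_{i}\right)=\left\Vert N_{i}\right\Vert$.) Only after this symmetrization step can one legitimately invoke the norm classification of Goodman--de la Harpe--Jones/Kronecker to conclude $q\dim_{V}M^{i}\in\left\{ 2\cos\left(\pi/n\right)|n\ge3\right\} \cup\left\{ a|2\le a<\infty,\ a\ \text{algebraic}\right\}$, which is how the cited source argues.
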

	
	\begin{definition} Let $V$ be a vertex operator algebra with finitely
		many inequivalent irreducible modules $M^{0},\cdots,M^{d}$. The \emph{global
			dimension} of $V$ is defined as
		\[
		\text{glob}\left(V\right)=\sum_{i=0}^{d}\left(q\dim_{V}M^{i}\right)^{2}.
		\]
	\end{definition}
	
	\begin{remark} \label{product property of global dimension}Let $U$
		and $V$ be ``good'' vertex operator algebras, $M$ be a $U$-module
		and $N$ be a $V$-module. Then Lemma 2.10 of \rm{\cite{ADJR}} gives
		
		\[
		q\dim_{U\otimes V}M\otimes N=q\dim_{U}M\cdot q\dim_{V}N,
		\]
		\[
		\text{glob}\left(U\otimes V\right)=\text{glob}\left(U\right)\cdot\text{glob}\left(V\right).
		\]
	\end{remark}
	
	Let $V$ be a vertex operator algebra, recall that a simple vertex
	operator algebra containing $V$ is called an \emph{extension} $U$
	of $V$. Now we have the following theorem \cite{ABD,HKL,ADJR}:
	
	\begin{theorem} \label{rationality of extesnion of VOA } \label{global dim of VOA and subVOA}
		Let $V$ be a ``good'' vertex operator algebra. Let $U$ be a simple
		vertex operator algebra which is an extension of $V$. Then $U$ is
		also ``good'' and
		\[
		\text{glob}\left(V\right)=\text{glob}\left(U\right)\cdot\left(q\dim_{V}\left(U\right)\right)^{2}.
		\]
		
	\end{theorem}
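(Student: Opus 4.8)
The plan is to separate the statement into two independent claims and prove them in turn: that the extension $U$ again satisfies all the requirements of a ``good'' vertex operator algebra, and that the global dimensions obey the displayed identity. For the first claim I would verify rationality, $C_2$-cofiniteness, CFT type and self-duality one at a time (simplicity of $U$ being part of the hypothesis), reducing each to the corresponding property of $V$ together with the decomposition of $U$ as a $V$-module.

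Concretely, since $V$ is rational and $C_2$-cofinite its module category is a modular tensor category, and a simple extension $U \supseteq V$ is precisely the datum of a connected commutative (\'etale) algebra in this category; the extension theory of \cite{HKL} then shows that $U$ is rational, and the $C_2$-cofiniteness of $U$ follows from that of $V$ as in \cite{ABD} because $U$ is a finite direct sum of irreducible $V$-modules. Writing this decomposition as $U=\bigoplus_i m_i M^i$ with $M^0=V$, connectedness forces $m_0=1$, and since $V$ is good every $M^i$ with $i>0$ has strictly positive conformal weight; hence $U_n=0$ for $n<0$ and $U_0=\mathbb{C}\1$, so $U$ is of CFT type. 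Self-duality is then extracted from Theorem~\ref{bilinear form}: for a simple vertex operator algebra of CFT type the space of invariant bilinear forms is $\bigl(U_0/L(1)U_1\bigr)^{*}$, which becomes one-dimensional once $L(1)U_1=0$, and a nonzero invariant form on a simple algebra is nondegenerate, yielding the isomorphism $U\cong U'$.

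For the numerical identity I would first record the comparison of quantum dimensions over the two algebras. Because $U$ and $V$ share the conformal vector their central charges coincide, so the factors $q^{-c/24}$ cancel in the defining limits and, for every $U$-module $W$, one gets $q\dim_V W=(q\dim_V U)\,(q\dim_U W)$ by inserting $\mathrm{Ch}_q U$ into the ratio $\mathrm{Ch}_q W/\mathrm{Ch}_q V$. Squaring this relation and summing over the inequivalent irreducible $U$-modules converts the target identity into the equivalent statement $\sum_{W}\bigl(q\dim_V W\bigr)^{2}=\mathrm{glob}(V)$, the sum ranging over irreducible $U$-modules regarded as $V$-modules.

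This last equality is the crux, and I expect it to be the main obstacle, because it is genuinely more than an algebraic manipulation of the previous relation: restricting an irreducible $U$-module to $V$ produces many irreducible $V$-modules, and a naive expansion does not collapse to $\mathrm{glob}(V)$. The correct mechanism is that $\mathrm{Rep}(U)$ is the category of local modules over the \'etale algebra $A$, whose categorical dimension equals $\mathrm{glob}(V)/(q\dim_V U)^{2}$ by the Kirillov--Ostrik dimension formula; this is exactly the identity above. I would realize this in vertex-operator-algebra language through the induction functor $\mathrm{Rep}(V)\to\mathrm{Rep}(U)$ and its restriction adjoint, using compatibility of induction with quantum dimensions (Proposition~\ref{quantum dimension and simple current}, Remark~\ref{product property of global dimension}) to track how each irreducible $V$-module is accounted for, following the argument of \cite{ADJR}. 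The delicate point is the multiplicity bookkeeping that makes the double count of $V$-modules against $U$-modules reduce to the single factor $(q\dim_V U)^{2}$.
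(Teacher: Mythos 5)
The paper never proves this theorem: it is imported as a known result with the citations \cite{ABD}, \cite{HKL}, \cite{ADJR} (and is used in that form throughout, e.g.\ in Remark~\ref{rationality-1}), so there is no internal argument to compare against. Your proposal is essentially a reconstruction of the proofs in those same references --- the \cite{HKL} correspondence between simple extensions and connected commutative algebras in $\mathrm{Rep}(V)$ for rationality, \cite{ABD} for $C_2$-cofiniteness, Li's theorem (Theorem~\ref{bilinear form}) for self-duality, and the Kirillov--Ostrik dimension formula, implemented via induction/restriction as in \cite{ADJR}, for the identity $\text{glob}(V)=\text{glob}(U)\cdot\left(q\dim_V U\right)^2$ --- so your route is exactly the one the paper points to, and the outline is correct. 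Two loose ends are worth tightening. First, for self-duality you say the space of invariant forms ``becomes one-dimensional once $L(1)U_1=0$,'' but you never verify that vanishing; it does follow from the hypotheses: writing $U=\bigoplus_i m_iM^i$ with $M^0=V$, self-duality of $V$ forces $L(1)V_1=0$, while for $i>0$ any weight-one vector of $M^i$ is a lowest-weight vector (since the conformal weight $\lambda_i$ is strictly positive), so $L(1)$ sends it into $\left(M^i\right)_0=0$; hence $L(1)U_1=0$ and a nonzero invariant form on the simple algebra $U$ is nondegenerate. Second, the step you correctly identify as the crux, $\sum_{W}\left(q\dim_V W\right)^2=\text{glob}(V)$ with $W$ running over irreducible $U$-modules, is in your write-up an appeal to the categorical formula $\dim\mathcal{C}_A^{\mathrm{loc}}=\dim\mathcal{C}/\left(\dim A\right)^2$ rather than a proof; that is a genuine reliance on external results (\cite{ADJR} and the references therein), but since the paper itself does no more than cite \cite{ADJR} at this point, your proposal sits at the same level of completeness as the text it would replace.
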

	
	\subsection{\label{subsec:The-unitary-series}The unitary series of the Virasoro
		VOAs}
	
	Now we recall notations about unitary minimal models of Virasoro algebra
	from \cite{FFK}. The models are parameterized by a complex number
	$\alpha_{-}^{2}$, related to the central charge of the Virasoro algebra
	by $c=13-6\alpha_{-}^{2}-6\alpha_{-}^{-2}$ where $\alpha_{-}^{2}=\frac{p}{p'}$
	and $\left|p-p'\right|=1.$ Without loss of generality, we write $p'=p+1$
	and denote $c_{p}=1-\frac{6}{p\left(p+1\right)}$ with $p=2,3,4,\cdots$.
	The label $I$ stands for a pair $\left(i',i\right)$ of positive
	integers and the corresponding highest weight is
	\begin{equation}
		h_{I}=h_{\left(i'\ i\right)}^{\left(p\right)}=\frac{1}{4}\left(i'^{2}-1\right)\alpha_{-}^{2}-\frac{1}{2}\left(i'i-1\right)+
		\frac{1}{4}\left(i^{2}-1\right)\alpha_{-}^{-2}=\frac{\left(pi'-\left(p+1\right)i\right)^{2}-1}{4p\left(p+1\right)}.\label{minimal model notation}
	\end{equation}
	for $1\le i'\le p,$ $1\le i\le p-1.$ We denote such unitary minimal
	models of Virasoro algebra by $L\left(c_{p},h_{\left(i',i\right)}^{\left(p\right)}\right).$
	
	\begin{remark} \label{pairs corr. to U's weights for unitary model}Use
		the above notation, we see that the central charge of the model $L\left(\frac{25}{28},0\right)$
		corresponds to the parameter $\alpha_{-}^{2}=\frac{7}{8}$ with $p=7,$
		$p'=8$. The highest weights for irreducible $L\left(\frac{25}{28},0\right)$-modules
		are
		
		\begin{equation}
			\left\{ 0,\frac{5}{32},\frac{3}{4},\frac{57}{32},\frac{13}{4},\frac{165}{32},\frac{15}{2},\frac{5}{14},\frac{3}{224},
			\frac{3}{28},\frac{143}{224},\frac{45}{28},\frac{675}{224},\frac{34}{7},\frac{9}{7},\frac{99}{224},\frac{1}{28},\frac{15}{224},\frac{25}{28},\frac{323}{224},
			\frac{39}{14}\right\} .\label{highest weights for 25/28}
		\end{equation}
		In particular, the pairs $\left(1,1\right)$, $\left(1,5\right)$
		and $\left(1,3\right)$ correspond to the highest weights $0$, $\frac{34}{7}$
		and $\frac{9}{7}$ respectively. \end{remark}
	
	Also note that the fusion rules for irreducible $L\left(c_{p},0\right)$-modules
	are as follows \cite{W}:
	
	\begin{definition} An ordered triple of pairs of integers $\left(\left(i',i\right),\left(j',j\right),\left(k',k\right)\right)$
		is called \emph{admissible }if $1\le i',j',k'\le p+1,1\le i,j,k\le p$,
		$i'+j'+k'<2\left(p+1\right),$ $i+j+k<2p$, $i'<j'+k'$, $j'<i'+k'$,
		$k'<i'+j'$, $i<j+k$, $j<i+k$, $k<i+j$, and the sums $i'+j'+k'$,
		$i+j+k$ are odd. \end{definition}
	
	\begin{proposition} \label{fusion rules of virasoro modules}The
		fusion rules between $L\left(c_{p},0\right)$-modules $L\left(c_{p},h_{\left(i',i\right)}^{\left(p\right)}\right),$
		$L\left(c_{p},h_{\left(j',j\right)}^{\left(p\right)}\right)$ are
		\[
		L\left(c_{p},h_{\left(i',i\right)}^{\left(p\right)}\right)\boxtimes L\left(c_{p},h_{\left(j',j\right)}^{\left(p\right)}\right)=
		\sum_{\left(k',k\right)}N_{\left(i',i\right),\left(j',j\right)}^{\left(k',k\right)}L\left(c_{p},h_{\left(k',k\right)}^{\left(p\right)}\right),
		\]
		where $N_{\left(i',i\right),\left(j',j\right)}^{\left(k',k\right)}$
		is $1$ iff $\left(\left(i',i\right),\left(j',j\right),\left(k',k\right)\right)$
		is an admissible triple of pairs and $0$ otherwise. \end{proposition}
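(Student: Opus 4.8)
The plan is to deduce these fusion rules from the Verlinde formula together with the explicit modular data of the minimal model. Since $L(c_p,0)$ is rational and $C_2$-cofinite, its category of modules is a modular tensor category, so by Huang's theorem the fusion rules are computed by
\[
N_{(i',i),(j',j)}^{(k',k)}=\sum_{(a,b)}\frac{S_{(i',i),(a,b)}\,S_{(j',j),(a,b)}\,\overline{S_{(k',k),(a,b)}}}{S_{(1,1),(a,b)}},
\]
the sum running over a complete set of irreducible labels. Thus the whole problem is reduced to evaluating a finite trigonometric sum, and the point is that the minimal-model $S$-matrix factorizes.

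First I would record the $S$-matrix in factorized form. Using the weight normalization \eqref{minimal model notation}, in which the first label is taken modulo $p+1$ and the second modulo $p$, an elementary reduction of the form $\sin\!\big(\pi\tfrac{p+1}{p}x\big)=\pm\sin\!\big(\tfrac{\pi x}{p}\big)$ for integer $x$ puts the $S$-matrix in the shape
\[
S_{(i',i),(a,b)}=\kappa\,\varepsilon_{(i',i),(a,b)}\,\sin\!\Big(\tfrac{\pi\,i'a}{p+1}\Big)\,\sin\!\Big(\tfrac{\pi\,i b}{p}\Big),
\]
with $\kappa$ a label-independent constant and $\varepsilon=\pm1$ a sign. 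Substituting into the Verlinde formula, $\kappa$ cancels and the summand separates, so that the sum over $(a,b)$ factors as a product of a sum over $a$ modulo $p+1$ and a sum over $b$ modulo $p$.

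The key step is to evaluate each of these one-variable sine-sums. Each has the form $\sum_a \sin(\pi i'a/m)\sin(\pi j'a/m)\sin(\pi k'a/m)/\sin(\pi a/m)$, a Dirichlet-kernel-type sum whose value is the indicator of the truncated Clebsch--Gordan region; it is exactly the Verlinde computation of the $\widehat{su}(2)$ fusion rule at level $m-2$. For $m=p+1$ this yields $1$ precisely when $(i',j',k')$ obeys the triangle inequalities, the bound $i'+j'+k'<2(p+1)$, and the parity condition that $i'+j'+k'$ is odd, and $0$ otherwise; for $m=p$ it yields the analogous condition on $(i,j,k)$. Multiplying the two factors returns exactly the admissibility conditions of the statement, and in particular shows $N\in\{0,1\}$.

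The main obstacle I anticipate is not the trigonometric evaluation itself but the bookkeeping forced by the reflection identification of labels, $(i',i)\sim(p+1-i',\,p-i)$, together with the signs $\varepsilon$ in the $S$-matrix. The factorized Verlinde sum naturally ranges over the full rectangle of pairs $(a,b)$, which double-counts relative to a fundamental domain for the reflection; the signs $\varepsilon$ are precisely what convert this discrepancy into the two parity conditions and guarantee that the two copies combine correctly rather than cancel. One must also treat with care the boundary labels at which a sine factor vanishes, ensuring such labels contribute nothing. As alternative routes one could follow Wang's original argument \cite{W} through Zhu's algebra $A(L(c_p,0))\cong\mathbb{C}[x]/\prod_{(r,s)}(x-h_{(r,s)})$ and the Frenkel--Zhu bimodule formula, where the combinatorics emerge instead from the Virasoro singular-vector relations; or one could exploit the GKO coset realization of $L(c_p,0)$, under which these fusion rules descend directly from the product of two $\widehat{su}(2)$ fusion rules, explaining structurally why the answer factorizes.
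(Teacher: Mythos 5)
Your proposal is correct in outline, but it takes a genuinely different route from the paper --- which in fact offers no proof at all: this proposition is quoted as a known result from Wang \cite{W}, where the fusion rules are obtained by VOA-internal methods, namely computing Zhu's algebra $A(L(c_p,0))\cong\mathbb{C}[x]/(G_p(x))$, whose roots are precisely the weights $h^{(p)}_{(r,s)}$, and then evaluating Frenkel--Zhu bimodules using the Virasoro singular-vector structure (the route you yourself list as an alternative). Your derivation instead runs through Huang's Verlinde formula plus the factorized minimal-model $S$-matrix and $\widehat{su}(2)$-type sine sums; this is the standard conformal-field-theory derivation, made rigorous by Huang's theorem, and you correctly isolate the genuinely delicate point: the sign $\varepsilon_{(i',i),(a,b)}$ (of the form $(-1)^{(i'+i)(a+b)}$ up to convention) and the double counting under the reflection $(i',i)\sim(p+1-i',\,p-i)$, which together produce the two parity conditions and express the minimal-model coefficient as a sum of two products of $\widehat{su}(2)$ coefficients (direct and reflected), of which at most one can be nonzero, whence $N\in\{0,1\}$. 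Two caveats you should make explicit in a full write-up: (i) Huang's theorem needs $L(c_p,0)$ to be rational, $C_2$-cofinite, self-dual and of CFT type --- rationality being a theorem of the very paper \cite{W} whose argument you are replacing --- so these must be quoted as inputs; the argument is not circular, since Huang's theorem nowhere presupposes the minimal-model fusion rules, but the logical dependence deserves a sentence; (ii) your sketch defers the actual evaluation of the signed sine sums, and that is where all of the admissibility conditions (the triangle inequalities, the bounds $i'+j'+k'<2(p+1)$ and $i+j+k<2p$, and both parities) actually emerge, so the computation still has to be executed. As for what each route buys: Wang's is self-contained at the level of VOA representation theory, historically prior, and free of modular-category machinery, which is presumably why the paper cites it; yours is more conceptual and uniform, reducing everything to finite trigonometric identities once the (much deeper) general theory is granted, and it explains structurally --- as does the GKO coset picture you mention --- why the answer factorizes into two $\widehat{su}(2)$-type conditions.
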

	
	\subsection{\label{subsec:Braiding-matrix}Braiding matrices}
	
	Now we recall four point functions. Let $V$ be a rational and $C_{2}$-cofinite
	vertex operator algebra of CFT type and $V\cong V'$. Let $M^{a_{1}},M^{a_{2}},M^{a_{3}},M^{a_{4}}$
	be four irreducible $V$-modules. By Lemma 4.1 in \cite{H1} we know
	that for $u_{a_{i}}\in M^{a_{i}},$
	
	\[
	\left\langle u_{a_{4}'},\mathcal{Y}_{a_{1},a_{5}}^{a_{4}}\left(u_{a_{1}},z_{1}\right)\mathcal{Y}_{a_{2},a_{3}}^{a_{5}}
	\left(u_{a_{2}},z_{2}\right)u_{a_{3}}\right\rangle ,
	\]
	
	\[
	\left\langle u_{a_{4}'},\mathcal{Y}_{a_{2},a_{6}}^{a_{4}}\left(u_{a_{1}},z_{2}\right)\mathcal{Y}_{a_{1},a_{3}}^{a_{6}}\left(u_{a_{1}},z_{1}\right)u_{a_{3}}\right\rangle
	\]
	are analytic on $\left|z_{1}\right|>\left|z_{2}\right|>0$ and $\left|z_{2}\right|>\left|z_{1}\right|>0$
	respectively, and can both be analytically extended to multi-valued
	analytic functions on
	\[
	R=\left\{ \left(z_{1},z_{2}\right)\in\mathbb{C}^{2}|z_{1},z_{2}\not=0,z_{1}\not=z_{2}\right\} .
	\]
	One can lift the multi-valued functions on $R$ to single-valued functions
	on the universal covering $\tilde{R}$ to $R$ as in \cite{H1}. We
	use
	
	\[
	E\left\langle u_{a_{4}'},\mathcal{Y}_{a_{1},a_{5}}^{a_{4}}\left(u_{a_{1}},z_{1}\right)\mathcal{Y}_{a_{2},a_{3}}^{a_{5}}\left(u_{a_{2}},z_{2}\right)u_{a_{3}}\right\rangle
	\]
	and
	\[
	E\left\langle u_{a_{4}'},\mathcal{Y}_{a_{2},a_{6}}^{a_{4}}\left(u_{a_{1}},z_{2}\right)\mathcal{Y}_{a_{1},a_{3}}^{a_{6}}\left(u_{a_{1}},z_{1}\right)u_{a_{3}}\right\rangle
	\]
	to denote those analytic functions.
	
	Let $\left\{ \mathcal{Y}_{a,b;i}^{c}|i=1,\cdots,N_{a,b}^{c}\right\} $
	be a basis of $I_{a,b}^{c}$. From \cite{H2},
	
	\[
	\left\{ E\left\langle u_{a_{4}'},\mathcal{Y}_{a_{1},a_{5};i}^{a_{4}}\left(u_{a_{1}},z_{1}\right)\mathcal{Y}_{a_{2},a_{3};j}^{a_{5}}\left(u_{a_{2}},z_{2}\right)u_{a_{3}}\right\rangle |i=1,\cdots,N_{a_{1},a_{5}}^{a_{4}},j=1,\cdots,N_{a_{2},a_{3}}^{a_{5}},\forall a_{5}\right\}
	\]
	is a linearly independent set. Fix a basis of intertwining operators.
	It was proved in \cite{KZ,TK} that
	\begin{alignat*}{1}
		& span\left\{ E\left\langle u_{a_{4}'},\mathcal{Y}_{a_{3},\mu;i}^{a_{4}}\left(u_{a_{3}},z_{1}\right)\mathcal{Y}_{a_{2},a_{1};j}^{\mu}\left(u_{a_{2}},z_{2}\right)u_{a_{1}}\right\rangle |i,j,\mu\right\} \\
		& =span\left\{ E\left\langle u_{a_{4}'},\mathcal{Y}_{a_{2},\gamma;k}^{a_{4}}\left(u_{a_{2}},z_{2}\right)\mathcal{Y}_{a_{3},a_{1};l}^{\gamma}\left(u_{a_{3}},z_{1}\right)u_{a_{1}}\right\rangle |k,l,\gamma\right\} ,
	\end{alignat*}
	where $u_{a_{i}}\in M^{a_{i}}$. Then there exists $\left(B_{a_{4,}a_{1}}^{a_{3},a_{2}}\right)_{\mu,\gamma}^{i,j;k,l}\in\mathbb{C}$
	such that
	\begin{alignat}{1}
		& E\left\langle u_{a_{4}'},\mathcal{Y}_{a_{3},\mu;i}^{a_{4}}\left(u_{a_{3}},z_{1}\right)\mathcal{Y}_{a_{2},a_{1};j}^{\mu}\left(u_{a_{2}},z_{2}\right)u_{a_{1}}\right\rangle \nonumber \\
		& =\sum_{k,l,\gamma}\left(B_{_{a_{4},a_{1}}}^{a_{3,}a_{2}}\right)_{\mu,\gamma}^{i,j;k,l}E\left\langle u_{a_{4}'},\mathcal{Y}_{a_{2},\gamma;k}^{a_{4}}\left(u_{a_{2}},z_{1}\right)\mathcal{Y}_{a_{3},a_{1};l}^{\gamma}\left(u_{a_{3}},z_{2}\right)u_{a_{1}}\right\rangle \label{braiding matrix property}
	\end{alignat}
	(see \cite{H1}). $B_{_{a_{4},a_{1}}}^{a_{3,}a_{2}}$ is called
	the\emph{ braiding matrix}.
	
	
	Recall that we have seen $\alpha_{-}^{2}=\frac{p}{p'}$ in Section
	\ref{subsec:The-unitary-series}. Now let $\alpha_{+}^{2}=-\frac{p'}{p}$,
	$x=\exp\left(2\pi i\alpha_{+}^{2}\right)$, $y=\exp\left(2\pi i\alpha_{-}^{2}\right)$,
	$\left[l\right]=x^{l/2}-x^{-l/2}$, $\left[l'\right]=y{}^{l'/2}-y{}^{-l'/2}.$
	Now we fix central charge $c_{p}$, denote $L\left(c_{p},h_{\left(i',i\right)}^{\left(p\right)}\right)$
	by $\left(i',i\right)$. Let $\left(a',a\right)$, $\left(m',m\right)$,
	$\left(n',n\right)$, $\left(c',c\right)$, $\left(b',b\right)$,
	$\left(d',d\right)$ be irreducible $L\left(c_{p},0\right)$-modules,
	the braiding matrices of screened vertex operators have the almost
	factorized form (cf. (2.19) of \cite{FFK}):
	\begin{alignat}{1}
		& \left(\tilde{B}_{\left(m',m\right),\left(n',n\right)}^{\left(a',a\right),\left(c',c\right)}\right){}_{\left(b',b\right),\left(d',d\right)}\nonumber \\
		& =i^{-\left(m'-1\right)\left(n-1\right)-\left(n'-1\right)\left(m-1\right)}\left(-1\right)^{1/2\left(a-b+c-d\right)\left(n'+m\right)+1/2\left(a'-b'+c'-d'\right)\left(n+m\right)}\label{FFK 2.19}\\
		& \cdot r\left(a',m',n',c'\right)_{b',d'}\cdot r\left(a,m,n,c\right)_{b,d},\nonumber
	\end{alignat}
	where the nonvanishing matrix elements of $r$-matrices are
	
	\begin{gather}
		r\left(a,1,n,c\right)_{a,c}=r\left(a,m,1,c\right)_{c,a}=1,\nonumber \\
		r\left(l\pm2,2,2,l\right)_{l\pm1,l\pm1}=x^{1/4},\nonumber \\
		r\left(l,2,2,l\right)_{l\pm1,l\pm1}=\mp x^{-1/4\mp l/2}\frac{\left[1\right]}{\left[l\right]},\nonumber \\
		r\left(l,2,2,l\right)_{l\pm1,l\mp1}=x^{-1/4}\frac{\left[l\pm1\right]}{\left[l\right]},\label{FFK 2.20}
	\end{gather}
	and the other $r$-matrices are given by the recursive relation
	\begin{gather}
		r\left(a,m+1,n,c\right)_{b,d}=\sum_{d_{1}\ge1}r\left(a,2,n,d_{1}\right)_{a_{1},d}\cdot r\left(a_{1},m,n,c\right)_{b,d_{1}},\nonumber \\
		r\left(a,m,n+1,c\right)_{b,d}=\sum_{d_{1}\ge1}r\left(a,m,2,c_{1}\right)_{b,d_{1}}\cdot r\left(d_{1},m,n,c\right)_{c_{1},d},\label{FFK 2.21}
	\end{gather}
	for any choice of $a_{1}$ and $c_{1}$ compatible with the fusion
	rules. The $r'$ matrices are given by the same formulas with the
	replacement $x\to x'$, $\left[\ \ \ \ \right]\to\left[\ \ \ \ \right]'.$
	
	\subsection{GKO construction of the unitary Virasoro VOA}
	
	Let $e,f$ and $h$ be the generators of $\mathfrak{sl}_{2}(\mathbb{C})$
	such that
	\[
	\left[e,f\right]=h,\ \left[h,e\right]=2e,\ \left[h,f\right]=-2f.
	\]
	Let $\left\langle \cdot,\cdot\right\rangle $ be the standard invariant
	bilinear form on $\mathfrak{sl}_{2}(\mathbb{C})$ defined by
	\[
	\left\langle h,h\right\rangle =2,\ \left\langle e,f\right\rangle =1,\ \left\langle e,e\right\rangle =\left\langle f,f\right\rangle =\left\langle h,e\right\rangle =\left\langle h,f\right\rangle =0.
	\]
	Let $\hat{\mathfrak{sl}}_{2}\left(\mathbb{C}\right)$ be the corresponding
	affine algebra of type $A_{1}^{(1)}$ and $\lambda_{0},\ \lambda_{1}$
	the fundamental weights for $\hat{\mathfrak{sl}}_{2}\left(\mathbb{C}\right)$.
	Denote
	\[
	\mathcal{L}(m,k)=\mathcal{L}\left(\left(m-k\right)\lambda_{0}+k\lambda_{1}\right)
	\]
	the irreducible highest weight module of $\hat{\mathfrak{sl}}_{2}\left(\mathbb{C}\right)$-module
	with highest weight $\left(m-k\right)\lambda_{0}+k\lambda_{1}$. It
	was proved in \cite{FZ} that $\mathcal{L}\left(m,0\right)$ has a
	natural vertex operator algebra structure for $m\in\mathbb{Z}_{+}$.
	The Virasoro vector $\omega^{m}$ of $\mathcal{L}\left(m,0\right)$
	is given by
	\[
	\omega^{m}=\frac{1}{2\left(m+2\right)}\left(\frac{1}{2}h_{-1}h+e_{-1}f+f_{-1}e\right)
	\]
	with central charge $\frac{3m}{m+2}$. Let $m\in\mathbb{Z}_{+}$,
	then $\mathcal{L}\left(m,0\right)$ is a rational vertex operator
	algebra and
	$$\left\{ \mathcal{L}\left(m,\ k\right)|k=0,1,\cdots,m\right\}$$
	is the set of all the irreducible 
	$\mathcal{L}\left(m,0\right)$-modules.
	Moreover, the fusion rules are given by
	
	\[
	\mathcal{L}\left(m,j\right)\boxtimes\mathcal{L}\left(m,k\right)=\sum_{i=\max\left\{ 0,j+k-m\right\} }^{\min\left\{ j,k\right\} }\mathcal{L}\left(m,j+k-2i\right).
	\]
	
	Let $\mathcal{L}\left(m,0\right)_{1}$ be the weight 1 subspace of
	$\mathcal{L}(m,0)$. Then $\mathcal{L}(m,0)_{1}$ has a structure
	of Lie algebra isomorphic to $\mathfrak{sl}_{2}(\mathbb{C})$ under
	$[a,b]=a_{0}b$, $\forall a,b\in\mathcal{L}(m,0)$. Let $h^{m}$,
	$e^{m}$, $f^{m}$ be the generators of $\mathfrak{sl}_{2}(\mathbb{C})$
	in $\mathcal{L}(m,0)_{1}$. Then $h^{m+1}:=h^{1}\otimes1+1\otimes h^{m},$
	$e^{m+1}:=e^{1}\otimes1+1\otimes e^{m}$ and $f^{m+1}:=f^{1}\otimes1+1\otimes f^{m}$
	generate a vertex operator subalgebra isomorphic to $\mathcal{L}\left(m+1,0\right)$
	in $\mathcal{L}\left(m,1\right)\otimes\mathcal{L}\left(m,0\right)$.
	It was proved in \cite{DL} and \cite{KR} that $\Omega^{m}:=\omega^{1}\otimes1+1\otimes\omega^{m}-\omega^{m+1}$
	also gives a Virasoro vector with central charge $c_{m+2}=1-6/\left(m+2\right)\left(m+3\right)$.
	Furthermore, $\omega^{m+1}$ and $\Omega^{m}$ are mutually commutative
	and $\Omega^{m}$ generates a simple Virasoro vertex operator algebra
	$L\left(c_{m+2},0\right)$. Therefore $\mathcal{L}\left(1,0\right)\otimes\mathcal{L}\left(m,0\right)$
	contains a vertex operator subalgebra isomorphic to $L\left(c_{m+2},0\right)\otimes\mathcal{L}\left(m+1,0\right)$.
	Note that both $L\left(c_{m+2},0\right)$ and $\mathcal{L}\left(m+1,0\right)$
	are rational and every $\mathcal{L}\left(1,0\right)\otimes\mathcal{L}\left(m,0\right)$-module
	can be decomposed into irreducible $L(c_{m+2},0)\otimes\mathcal{L}(m+1,0)$-submodules.
	We have the following decomposition \cite{GKO}:
	
	\begin{equation}
		\mathcal{L}\left(1,\epsilon\right)\otimes\mathcal{L}\left(m,n\right)=\bigoplus_{0\le s\le m+3,s\equiv n+\epsilon\ \rm{mod} 2}L\left(c_{m+2},h_{\left(s+1,n+1\right)}^{(m+2)}\right)\otimes\mathcal{L}\left(m+1,s\right)\label{eq:(2.3)}
	\end{equation}
	where $\epsilon=0,1$ and $0\le n\le m$. This is the GKO-construction
	of the unitary Virasoro vertex operator algebras.
	
	\subsection{Structure of 6A-algebra}
	$6A$-algebra $\mathcal{U}_{6A}$ is a
	vertex operator algebra of the Moonshine type generated by two Ising
	vectors whose inner product is $5/2^{10}.$
	
	A vector $v\in V_{2}$ is called a \emph{Virasoro
		vector with central charge $c_{v}$ }if it satisfies\emph{ $v_{1}v=2v$
	}and $v_{3}v=\frac{c_{v}}{2}\mathbf{1}$. Then the operators $L_{n}^{v}:=v_{n+1},\ n\in\mathbb{Z}$,
	satisfy the Virasoro commutation relation
	\[
	\left[L_{m}^{v},\ L_{n}^{v}\right]=\left(m-n\right)L_{m+n}^{v}+\delta_{m+n,\ 0}\frac{m^{3}-m}{12}c_{v}
	\]
	for $m,\ n\in\mathbb{Z}.$ A Virasoro vector $v\in V_{2}$ with central
	charge $1/2$ is called an \emph{Ising vector }if $v$ generates the
	Virasoro vertex operator algebra $L(1/2,\ 0)$.
	
	It is shown in \cite{C} that the structure of the subalgebra generated
	by two Ising vectors $e$ and $f$ in the algebra $V_{2}^{\natural}$
	depends on only the conjugacy class of $\tau_{e}\tau_{f}$, and the
	inner product $\left\langle e,f\right\rangle $ is given by the following
	table:
	\begin{center}
		\begin{tabular}{|c|c|c|c|c|c|c|c|c|c|}
			\hline
			$\left\langle \tau_{e}\tau_{f}\right\rangle ^{\mathbb{M}}$  & $1A$  & $2A$  & $3A$  & $4A$  & $5A$  & $6A$  & $3C$  & $4B$  & $2B$\tabularnewline
			\hline
			$\left\langle e,f\right\rangle $  & $1/4$  & $1/2^{5}$  & $13/2^{10}$  & $1/2^{7}$  & $3/2^{9}$  & $5/2^{10}$  & $1/2^{8}$  & $1/2^{8}$  & $0$\tabularnewline
			\hline
		\end{tabular}
		\par\end{center}

	Certain coset subalgebra of $V_{\sqrt{2}E_{8}}$ associated with extended
	$E_{8}$ diagram is constructed in \cite{LYY} by removing one node
	from the diagram. In each case, the coset subalgebra contains some
	Ising vectors and the coset subalgebra is generated by two Ising vectors
	with inner product the same as the number given in the table above.
	In particular, the coset subalgebra $\mathcal{U}_{6A}$ corresponding
	to the $6A$ case was constructed, i.e., the case with inner product
	$\frac{5}{2^{10}}.$ Let $\mathcal{V}$ be the $3A$-algebra, that
	is, the vertex operator algebra generated by two Ising vectors whose
	$\tau$-involutions generate $S_{3}$ and with inner product $\frac{13}{2^{10}}$.
	The candidates for $\mathcal{V}$ were given \cite{M} and it was
	proved in \cite{SY} that only one of these candidates actually exists
	and that there is unique vertex operator algebra structure on it.

	Now we recall the following results about the $3A$-algebra $\mathcal{V}$
	from \cite{SY} .
	
	\begin{lemma} \label{rationality of 3A}The $3A$-algebra $\mathcal{V}$
		is rational. \end{lemma}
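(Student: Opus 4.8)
The plan is to establish rationality of the $3A$-algebra $\mathcal{V}$ by realizing it as a commutant (coset) subalgebra inside a known rational vertex operator algebra and then invoking the preservation of rationality under extensions and commutants. Since $\mathcal{V}$ arises in \cite{LYY} as a coset subalgebra of the lattice vertex operator algebra $V_{\sqrt{2}E_8}$, the natural strategy is to identify an explicit rational ambient VOA $W$ together with a rational subalgebra $T \subseteq W$ such that $\mathcal{V}$ is the commutant $\mathrm{Com}_W(T)$. The most economical route here is the GKO-type realization: $\mathcal{V}$ should be expressible through tensor products of unitary Virasoro models $L(c_p,0)$ together with affine $\mathfrak{sl}_2$ pieces $\mathcal{L}(m,0)$, all of which are known to be rational and $C_2$-cofinite.

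First I would pin down the precise decomposition of $\mathcal{V}$ as a module over a tensor product $T = L(1/2,0)^{\otimes a} \otimes (\text{other unitary factors})$, using the fact established in \cite{SY,M} that the $3A$-algebra has a known graded character and a known set of Ising vectors generating commuting Virasoro subalgebras. Concretely, the two Ising vectors $e,f$ generate inside $\mathcal{V}$ a copy of $L(1/2,0) \otimes L(1/2,0)$ together with the Virasoro vector associated to their product; by the standard coset analysis the $\tau$-involutions generating $S_3$ force a decomposition of $\mathcal{V}$ into finitely many irreducible $T$-modules. Then I would apply Theorem~\ref{rationality of extesnion of VOA} (with $T$ playing the role of the ``good'' VOA $V$ and $\mathcal{V}$ its simple extension): since each tensor factor in $T$ is rational, $C_2$-cofinite, simple, of CFT type and self-dual, the tensor product $T$ is ``good'' by the product properties in Remark~\ref{product property of global dimension}, and any simple extension of a ``good'' VOA is again rational.

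The key steps, in order, are: (1) identify the rational seed algebra $T$ sitting inside $\mathcal{V}$ with the correct central charge matching that of $\mathcal{V}$; (2) verify that $\mathcal{V}$ decomposes into \emph{finitely many} irreducible $T$-modules, i.e.\ that $\mathcal{V}$ is a genuine finite extension and not merely a module with infinite multiplicity; (3) confirm $\mathcal{V}$ is simple, so that Theorem~\ref{rationality of extesnion of VOA} applies verbatim; and (4) conclude rationality together with $C_2$-cofiniteness. Alternatively, if the cleanest realization is as a commutant rather than an extension, I would instead invoke the coset/commutant rationality results referenced in the preliminaries (mirror extension theory \cite{Lin}) to transfer rationality from the rational ambient algebra down to the coset $\mathcal{V}$.

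\textbf{The main obstacle} will be step (2): controlling the full branching of $\mathcal{V}$ over $T$ and showing the decomposition is finite with explicitly identifiable irreducible summands, since the central charge of $\mathcal{V}$ is relatively large and the Virasoro/affine fusion data must be assembled carefully. This is exactly where the explicit structure theorem of \cite{SY} for the unique $3A$-algebra is indispensable — it supplies the graded dimension and the generating set of mutually commuting Virasoro vectors needed to exhibit $\mathcal{V}$ as a finite, simple extension of a ``good'' tensor-product seed, after which the rationality conclusion follows formally from the preservation theorems recalled in Section~2.2.
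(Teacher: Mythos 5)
The paper itself offers no proof of this lemma: it is recalled verbatim from \cite{SY}, so there is no internal argument to compare yours against. Judged on its own terms, your overall strategy --- exhibit $\mathcal{V}$ as a finite simple extension of a ``good'' tensor-product seed $T$ and invoke Theorem~\ref{rationality of extesnion of VOA} --- is the right one; it is exactly how the paper disposes of the analogous statement for the $6A$-algebra in Remark~\ref{rationality-1}, and it does work here, but only with the seed $T=L(4/5,0)\otimes L(6/7,0)$, over which $\mathcal{V}$ decomposes into the three irreducible summands listed in Lemma~\ref{U_3A modules}.

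However, your concrete identification of the seed contains a genuine error. You claim the two Ising vectors $e,f$ generate a copy of $L(1/2,0)\otimes L(1/2,0)$ inside $\mathcal{V}$. They do not: in the $3A$ case $\langle e,f\rangle=13/2^{10}\neq 0$, i.e.\ $e_{3}f\neq 0$, so $e$ and $f$ are not mutually commutative Virasoro vectors and cannot generate a tensor product of two Ising models; indeed, by the very definition of the $3A$-algebra the subalgebra generated by $e$ and $f$ is all of $\mathcal{V}$, whose central charge is $4/5+6/7=58/35\neq 1$. Consequently your steps (1)--(2), as written, cannot be carried out with the proposed $T$; the finite branching you need is the one over $L(4/5,0)\otimes L(6/7,0)$ recorded in Lemma~\ref{U_3A modules} (itself a structure result of \cite{SY}), and that seed is ``good'' since both factors are unitary minimal Virasoro models. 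Your fallback route is also unavailable: there is no general theorem transferring rationality from an ambient rational VOA to a commutant/coset subalgebra --- rationality of cosets is a well-known open problem --- and mirror extension theory \cite{Lin} does not supply such a transfer. So the proposal is repairable (replace $T$ by $L(4/5,0)\otimes L(6/7,0)$, use simplicity of $\mathcal{V}$ and the decomposition of Lemma~\ref{U_3A modules}, then apply Theorem~\ref{rationality of extesnion of VOA}), but as stated it does not go through.
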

	
	\begin{lemma} \label{U_3A modules} All the irreducible $\mathcal{V}$-modules
		are as follows:
		
		\begin{align*}
			\mathcal{V}=\mathcal{V}\left(0\right) & =\left(\left(L\left(\frac{4}{5},0\right)\oplus L\left(\frac{4}{5},3\right)\right)\otimes\left(L\left(\frac{6}{7},0\right)\oplus L\left(\frac{6}{7},5\right)\right)\right)\\
			& \oplus L\left(\frac{4}{5},\frac{2}{3}\right)^{+}\otimes L\left(\frac{6}{7},\frac{4}{3}\right)^{+}\oplus L\left(\frac{4}{5},\frac{2}{3}\right)^{-}\otimes L\left(\frac{6}{7},\frac{4}{3}\right)^{-},
		\end{align*}
		
		\begin{align*}
			\mathcal{V}\left(\frac{1}{7}\right) & =\left(L\left(\frac{4}{5},0\right)\oplus L\left(\frac{4}{5},3\right)\right)\otimes\left((L\left(\frac{6}{7},\frac{1}{7}\right)\oplus L\left(\frac{6}{7},\frac{22}{7}\right)\right)\\
			& \oplus L(\left(\frac{4}{5},\frac{2}{3}\right)^{+}\otimes L\left(\frac{6}{7},\frac{10}{21}\right)^{+}\oplus L\left(\frac{4}{5},\frac{2}{3}\right)^{-}\otimes L\left(\frac{6}{7},\frac{10}{21}\right)^{-},
		\end{align*}
		
		\begin{align*}
			\mathcal{V}\left(\frac{5}{7}\right) & =\left(L\left(\frac{4}{5},0\right)\oplus L\left(\frac{4}{5},3\right)\right)\otimes\left(L\left(\frac{6}{7},\frac{5}{7}\right)\oplus L\left(\frac{6}{7},\frac{12}{7}\right)\right)\\
			& \oplus L(\left(\frac{4}{5},\frac{2}{3}\right)^{+}\otimes L\left(\frac{6}{7},\frac{1}{21}\right)^{+}\oplus L\left(\frac{4}{5},\frac{2}{3}\right)^{-}\otimes L\left(\frac{6}{7},\frac{1}{21}\right)^{-},
		\end{align*}
		
		\begin{align*}
			\mathcal{V}\left(\frac{2}{5}\right) & =\left(L\left(\frac{4}{5},\frac{2}{5}\right)\oplus L\left(\frac{4}{5},\frac{7}{5}\right)\right)\otimes\left(L\left(\frac{6}{7},0\right)\oplus L\left(\frac{6}{7},5\right)\right)\\
			& \oplus L\left(\frac{4}{5},\frac{1}{15}\right)^{+}\otimes L(\left(\frac{6}{7},\frac{4}{3}\right)^{+}\oplus L\left(\frac{4}{5},\frac{1}{15}\right)^{-}\otimes L\left(\frac{6}{7},\frac{4}{3}\right)^{-},
		\end{align*}
		
		\begin{align*}
			\mathcal{V}\left(\frac{19}{35}\right) & =\left(L\left(\frac{4}{5},\frac{2}{5}\right)\oplus L\left(\frac{4}{5},\frac{7}{5}\right)\right)\otimes\left(L\left(\frac{6}{7},\frac{1}{7}\right)\oplus L\left(\frac{6}{7},\frac{22}{7}\right)\right)\\
			& \oplus L(\left(\frac{4}{5},\frac{1}{15}\right)^{+}\otimes L(\left(\frac{6}{7},\frac{10}{21}\right)^{+}\oplus L(\left(\frac{4}{5},\frac{1}{15}\right)^{-}\otimes L(\left(\frac{6}{7},\frac{10}{21}\right)^{-},
		\end{align*}
		\begin{align*}
			\mathcal{V}\left(\frac{39}{35}\right) & =\left(L\left(\frac{4}{5},\frac{2}{5}\right)\oplus L\left(\frac{4}{5},\frac{7}{5}\right)\right)\otimes\left(L\left(\frac{6}{7},\frac{5}{7}\right)\oplus L\left(\frac{6}{7},\frac{12}{7}\right)\right)\\
			& \oplus L(\left(\frac{4}{5},\frac{1}{15}\right)^{+}\otimes L(\left(\frac{6}{7},\frac{1}{21}\right)^{+}\oplus L(\left(\frac{4}{5},\frac{1}{15}\right)^{-}\otimes L\left(\frac{6}{7},\frac{1}{21}\right)^{-}.
		\end{align*}
		
	\end{lemma}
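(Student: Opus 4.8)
The plan is to realize $\mathcal{V}$ as a conformal extension of the rational, $C_2$-cofinite tensor product $T:=L(4/5,0)\otimes L(6/7,0)$ and then to classify the irreducible $\mathcal{V}$-modules by inducing from irreducible $T$-modules. By Lemma \ref{rationality of 3A} the algebra $\mathcal{V}$ is rational, so it has finitely many irreducible modules, all ordinary. The decomposition of $\mathcal{V}=\mathcal{V}(0)$ recorded in the statement already exhibits $T$ as a full subalgebra sharing the conformal vector; since $T$ is a tensor product of good minimal-model VOAs it is itself ``good'', and by Theorem \ref{global dim of VOA and subVOA} the extension $\mathcal{V}$ is ``good'' with $\text{glob}(T)=\text{glob}(\mathcal{V})\cdot(q\dim_T\mathcal{V})^2$. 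Because every $\mathcal{V}$-module is in particular a $T$-module, it decomposes into irreducible $T$-modules, each of which is a tensor product $L(4/5,h)\otimes L(6/7,h')$ by Proposition \ref{fusion of tensor product} together with the known lists of irreducible $c=4/5$ and $c=6/7$ minimal-model modules.

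First I would write down the complete list of irreducible $T$-modules, recording their conformal weights and, via Proposition \ref{quantum dimension and simple current} and Remark \ref{product property of global dimension}, their quantum dimensions. In this list the module $\sigma:=L(4/5,3)\otimes L(6/7,5)$, together with its two tensor factors, is a simple current, as one checks directly from the Virasoro fusion rules of Proposition \ref{fusion rules of virasoro modules}; these generate the simple-current group whose associated extension is $\mathcal{V}$. The factors $L(4/5,2/3)$, $L(4/5,1/15)$, $L(6/7,4/3)$, $L(6/7,10/21)$ and $L(6/7,1/21)$ are the fixed points of this action and are exactly the modules decorated with the $\pm$ labels.

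Next I would use induction. Every irreducible $\mathcal{V}$-module is a summand of an induced module $\mathcal{V}\boxtimes_T M$ for some irreducible $T$-module $M$, and whether this induced module is irreducible over $\mathcal{V}$ or splits is governed by the monodromy of $M$ with the simple currents generating $\mathcal{V}$. Grouping the $T$-modules into orbits under fusion with $\sigma$ and matching the monodromy and conformal-weight data reproduces the six blocks $\mathcal{V}(h)$, whose minimal conformal weights $0,\,1/7,\,5/7,\,2/5,\,19/35,\,39/35$ are precisely the surviving values. The delicate point here is that orbits fixed by the current split into two summands, so the $+/-$ labels on $L(4/5,2/3)^\pm$, $L(6/7,4/3)^\pm$, and the analogous factors must be correlated across the two tensor factors exactly as written.

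The main obstacle will be two-fold: carrying out this fixed-point resolution so that the $\pm$ pieces pair up correctly, and establishing completeness rather than mere existence of the six modules. For completeness I would argue by a quantum-dimension count: summing $(q\dim_T M^i)^2$ over all irreducible $T$-modules yields $\text{glob}(T)$, and distributing these contributions among the $\mathcal{V}(h)$ through Theorem \ref{global dim of VOA and subVOA} shows that the six listed modules already account for all of $\text{glob}(\mathcal{V})$, ruling out any further irreducible $\mathcal{V}$-module. An alternative and possibly cleaner route to completeness is to realize $\mathcal{V}$ as a fixed-point subalgebra under a finite automorphism group and apply the quantum-Galois correspondence of Theorem \ref{classical galois theory} together with Theorem \ref{quantum dimension and orbifold module}, which would identify the irreducible $\mathcal{V}$-modules directly with the graded isotypic components and pin down the $\pm$ matching automatically.
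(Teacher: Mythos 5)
There is a genuine gap, and it sits under the structural claim your whole argument rests on. You assert that $L\left(\frac{4}{5},3\right)\otimes L\left(\frac{6}{7},0\right)$, $L\left(\frac{4}{5},0\right)\otimes L\left(\frac{6}{7},5\right)$ and their product $\sigma$ generate ``the simple-current group whose associated extension is $\mathcal{V}$.'' They do not: the simple current extension of $T=L\left(\frac{4}{5},0\right)\otimes L\left(\frac{6}{7},0\right)$ by that $\mathbb{Z}_2\times\mathbb{Z}_2$ group is exactly $\left(L\left(\frac{4}{5},0\right)\oplus L\left(\frac{4}{5},3\right)\right)\otimes\left(L\left(\frac{6}{7},0\right)\oplus L\left(\frac{6}{7},5\right)\right)$, i.e.\ the tensor product $W\left(\frac{4}{5}\right)\otimes W\left(\frac{6}{7}\right)$ of the two $W$-algebras, which is a \emph{proper} subalgebra of $\mathcal{V}$. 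Indeed, as a $T$-module $\mathcal{V}$ also contains $L\left(\frac{4}{5},\frac{2}{3}\right)\otimes L\left(\frac{6}{7},\frac{4}{3}\right)$ with multiplicity two, and $L\left(\frac{4}{5},\frac{2}{3}\right)$ is not a simple current of $L\left(\frac{4}{5},0\right)$ (its quantum dimension is $2$), so $\mathcal{V}$ is not a simple current extension of $T$ at all; the only simple currents of $T$ are the four modules you listed. Consequently the induction/monodromy machinery you invoke, applied over $T$, classifies $W\left(\frac{4}{5}\right)\otimes W\left(\frac{6}{7}\right)$-modules and cannot produce the six modules of the lemma. Relatedly, the $\pm$ superscripts in the statement are not decorations on Virasoro modules: $L\left(\frac{4}{5},\frac{2}{3}\right)^{\pm}$ denotes the two inequivalent $W\left(\frac{4}{5}\right)$-module structures on the single irreducible Virasoro module $L\left(\frac{4}{5},\frac{2}{3}\right)$ (these arise from fixed-point splitting in the extension $L\left(\frac{4}{5},0\right)\subset W\left(\frac{4}{5}\right)$), and similarly for the $c=\frac{6}{7}$ factors; over $T$ the symbols $\pm$ have no meaning.

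The repair is a two-step picture, and it is essentially what is done in \cite{SY}, from which the paper simply recalls this lemma without proof: first extend $T$ to $W\left(\frac{4}{5}\right)\otimes W\left(\frac{6}{7}\right)$ by the $\mathbb{Z}_2\times\mathbb{Z}_2$ currents (this is where your fixed-point resolution and the $\pm$ labels legitimately appear), and then realize $\mathcal{V}$ as the $\mathbb{Z}_3$ simple current extension of $W\left(\frac{4}{5}\right)\otimes W\left(\frac{6}{7}\right)$ by the current $L\left(\frac{4}{5},\frac{2}{3}\right)^{+}\otimes L\left(\frac{6}{7},\frac{4}{3}\right)^{+}$, whose fusion cube is the vacuum. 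This $\mathbb{Z}_3$ current is invisible over $T$, which is why your one-step setup misses it; it is also exactly what forces the correlation of the $\pm$ signs across the two tensor factors (locality of the extension), the ``delicate point'' you flag but cannot resolve in your framework. With the correct base algebra, the orbit/fixed-point analysis plus a completeness count (your quantum-dimension argument via Theorem \ref{global dim of VOA and subVOA}, or the counting of \cite{SY}) does yield precisely the six modules. Finally, your alternative route via Theorem \ref{classical galois theory} is inverted: $W\left(\frac{4}{5}\right)\otimes W\left(\frac{6}{7}\right)$ is the fixed-point subalgebra $\mathcal{V}^{\mathbb{Z}_3}$, not vice versa, so quantum Galois theory classifies simple $\mathcal{V}^{\mathbb{Z}_3}$-submodules of $\mathcal{V}$ rather than irreducible $\mathcal{V}$-modules, and does not by itself give the classification claimed in the lemma.
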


	It was proved in \cite{LYY} that $\mathcal{V}\subset\mathcal{U}$
	and as a module of $\mathcal{V}\otimes L\left(\frac{25}{28},0\right)$,
	
	\[
	\mathcal{U}_{6A}\cong\mathcal{V}\otimes L\left(\frac{25}{28},0\right)\oplus\mathcal{V}\left(\frac{1}{7}\right)\otimes L\left(\frac{25}{28},\frac{34}{7}\right)\oplus\mathcal{V}\left(\frac{5}{7}\right)\otimes L\left(\frac{25}{28},\frac{9}{7}\right).
	\]
	From here forward, we denote
	\begin{gather}
		P_{1}=\mathcal{V},\ P_{2}=\mathcal{V}\left(\frac{1}{7}\right),\ P_{3}=\mathcal{V}\left(\frac{5}{7}\right),\nonumber \\
		Q_{1}=L\left(\frac{25}{28},0\right),\ Q_{2}=L\left(\frac{25}{28},\frac{34}{7}\right),\ Q_{3}=L\left(\frac{25}{28},\frac{9}{7}\right),\label{PQ notation}
	\end{gather}
	and $U^{i}=P_{i}\otimes Q_{i}$, $i=1,2,3$. Then
	\[
	\mathcal{U}_{6A}\cong P_{1}\otimes Q_{1}\oplus P_{2}\otimes Q_{2}\oplus P_{3}\otimes Q_{3}=U^{1}\oplus U^{2}\oplus U^{3}.
	\]
	
	\begin{remark} \label{rationality-1} Since $\mathcal{V}\otimes L\left(\frac{25}{28},0\right)$
		is a rational and $C_{2}$-cofinite vertex operator algebra, it is
		straightforward to see that $\mathcal{U}$ is also rational and $C_{2}$-cofinite
		by \rm{\cite{ABD,HKL}}. \end{remark}
	
	\begin{remark} \label{self-dual}
		Since $\mathcal{U}_{1}=0$ and $\dim\mathcal{U}_{0}=1$ by Theorem
		\ref{bilinear form}, there is a unique bilinear form on $\mathcal{U}$
		and thus $\mathcal{U}'\cong\mathcal{U}$. Without loss of generality,
		we can identify $\mathcal{U}$ with $\mathcal{U}'$.
	\end{remark}   
	
	Now we recall the following results in \cite{DJY}.
	
	
	\begin{theorem}\label{uniqueness of 6A-algebra}
		The vertex operator algebra structure on $6A$-algebra $\mathcal{U}_{6A}$
		over $\mathbb{C}$ is unique.     
	\end{theorem}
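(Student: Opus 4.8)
The plan is to show that every vertex operator algebra structure on the fixed graded space $\mathcal{U}_{6A}=U^1\oplus U^2\oplus U^3$ which restricts to the given $\mathcal{V}\otimes L\left(\frac{25}{28},0\right)$-module decomposition is, after a diagonal rescaling of the three summands, isomorphic to one reference structure. The entire argument is organized around the scalars that express an arbitrary such structure in a fixed basis of intertwining operators.

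First I would fix the reference structure $(\mathcal{U}_{6A},Y)$ produced by the coset construction of \cite{LYY}, choose for each admissible triple a basis intertwining operator $\mathcal{Y}_{i,j}^k$ of type $\ito{U^i}{U^j}{U^k}$, and normalize so that $Y=\sum_{i,j,k}\mathcal{Y}_{i,j}^k$. Given any other structure $\overline{Y}$ on the same graded space, with the same $\mathbf{1},\omega\in U^1$ and the same action of $\mathcal{V}\otimes L\left(\frac{25}{28},0\right)$, each restriction $U^i\times U^j\to U^k$ of $\overline{Y}$ is an intertwining operator of type $\ito{U^i}{U^j}{U^k}$. Since the Virasoro fusion rules among $Q_1,Q_2,Q_3$ are $0$ or $1$ (Proposition \ref{fusion rules of virasoro modules}) and fusion of tensor products factorizes (Proposition \ref{fusion of tensor product}), each of these spaces is at most one-dimensional, so $\overline{Y}=\sum_{i,j,k}\lambda_{i,j}^k\mathcal{Y}_{i,j}^k$ with $\lambda_{i,j}^k=0$ unless $N_{i,j}^k=1$. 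The vacuum and creation axioms immediately give $\lambda_{1,j}^j=\lambda_{i,1}^i=1$.

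The heart of the argument is to pin down the remaining $\lambda_{i,j}^k$. The nondegenerate invariant bilinear form on $\mathcal{U}_{6A}$ (Remark \ref{self-dual}), realized through the pairings $U^i\times U^i\to U^1$, forces $\lambda_{i,i}^1\neq 0$; combining this with the associativity identity \eqref{associavity} and the mirror extension theory of \cite{Lin}, I would conclude $\lambda_{i,j}^k\neq 0$ for every admissible $(i,j,k)$. Next, transcribing the commutativity \eqref{communitivity} and associativity \eqref{associavity} of $\overline{Y}$ through the braiding relation \eqref{braiding matrix property} together with the explicit braiding matrices \eqref{FFK 2.19}--\eqref{FFK 2.21}, one obtains for each admissible $(i,j,k)$ a quadratic relation forcing $\left(\lambda_{i,j}^k\right)^2=1$, provided the relevant braiding entries do not vanish. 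For the $6A$-algebra these entries are explicitly computable and turn out to be nonzero, so each $\lambda_{i,j}^k\in\{\pm1\}$ unconditionally.

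Finally I would construct the isomorphism by diagonal rescaling. Set $\sigma|_{U^i}=t_i\,\id$ with $t_1=1$; then $\sigma(\mathbf{1})=\mathbf{1}$ and $\sigma(\omega)=\omega$ automatically, as $\mathbf{1},\omega\in U^1$. The requirement that $\sigma$ intertwine $Y$ and $\overline{Y}$ reduces, component by component, to the coboundary equations $t_k=\lambda_{i,j}^k\,t_it_j$ over all admissible $(i,j,k)$. With only three summands and the explicit fusion pattern among $U^1,U^2,U^3$, this is a finite consistency check: one solves for $t_2,t_3\in\{\pm1\}$ compatible with every sign relation, exhibiting $(\lambda_{i,j}^k)$ as a coboundary and hence $\sigma$ as the sought isomorphism $(\mathcal{U}_{6A},Y)\cong(\mathcal{U}_{6A},\overline{Y})$. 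The main obstacle is the quadratic step $\left(\lambda_{i,j}^k\right)^2=1$: it is precisely here that the explicit braiding-matrix data and the nonvanishing of their entries are indispensable, and where the mirror extension input is needed beforehand to secure nonvanishing of the $\lambda_{i,j}^k$; by contrast the concluding coboundary check is routine because the decomposition has only three pieces.
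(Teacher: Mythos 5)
Your proposal cannot be compared against a proof in this paper, because the paper does not prove this statement: Theorem \ref{uniqueness of 6A-algebra} is quoted from \cite{DJY}. The right comparison is with the paper's own uniqueness machinery for general $\mathcal{U}_k$ (Sections 4--5), of which your argument is essentially the $k=1$ specialization: the same expansion $\overline{Y}=\sum\lambda_{i,j}^k\mathcal{Y}_{i,j}^k$ (note, though, that ``fusion rules are $0$ or $1$'' needs the $P$-side fusion rules as well as the Virasoro side --- this is the paper's Theorem \ref{same fusion rule}, via Theorem \ref{c1}(iii), or the known $3A$ fusion rules of \cite{SY}), the same use of the bilinear form and mirror extensions for nonvanishing, the same braiding-matrix step for $\bigl(\lambda_{i,j}^k\bigr)^2=1$, and the same diagonal rescaling at the end.

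The genuine gap is your final claim that the coboundary check is ``routine.'' The relevant fusion rules are $U^2\boxtimes U^2=U^1+U^3$, $U^2\boxtimes U^3=U^2+U^3$, $U^3\boxtimes U^3=U^1+U^2+U^3$, so the system $t_k=\lambda_{i,j}^k t_it_j$ is heavily overdetermined: roughly eight equations in the two unknowns $t_2,t_3$. Even granting the permutation symmetries $\lambda_{i,j}^k=\lambda_{j,i}^k=\lambda_{j,k}^i$ (which themselves require proof, cf.\ Theorem \ref{alternation}), solvability forces $\lambda_{2,2}^3=\lambda_{3,3}^3$, and this does \emph{not} follow from $\bigl(\lambda_{i,j}^k\bigr)^2=1$ plus symmetry: the sign pattern with $\lambda_{3,3}^3=-1$ and all other admissible $\lambda$'s equal to $+1$ satisfies every condition you have established, yet admits no rescaling $\sigma$. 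Excluding it requires the mixed braiding identity $\lambda_{2,2}^3\lambda_{3,3}^3=\bigl(\lambda_{2,3}^3\bigr)^2=1$, an instance of the same four-point-function equations evaluated at mixed indices (the paper's (\ref{abtt}) and (\ref{lambdat})), which in turn needs further nonvanishing braiding entries. So the braiding data is indispensable in the coboundary step too, not only in the quadratic step; this is precisely where the paper's Theorem \ref{squarelambda} and its $\sigma$-construction lemma, and likewise \cite{DJY}, do the remaining work.
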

	
	For simplicity, we shall use $[h_{1},\ h_{2}]$
	to denote the module $\mathcal{V}(h_{1})\otimes L(\frac{25}{28},\ h_{2})$ in this section.
	
	\begin{lemma} \label{realization of VOA}We have the following decomposition:
		\begin{align*}
			& \mathcal{L}\left(3,0\right)\otimes\mathcal{L}\left(1,0\right)\otimes\mathcal{L}\left(1,0\right)\otimes\mathcal{L}\left(1,0\right)\oplus\mathcal{L}\left(3,3\right)\otimes\mathcal{L}\left(1,1\right)\otimes\mathcal{L}\left(1,0\right)\otimes\mathcal{L}\left(1,0\right)\\
			& \cong\left\{ \left[0,\ 0\right]\oplus\left[\frac{1}{7},\frac{34}{7}\right]\oplus\left[\frac{5}{7},\frac{9}{7}\right]\right\} \otimes\mathcal{L}\left(6,0\right)\\
			& \oplus\left\{ \left[0,\frac{3}{4}\right]\oplus\left[\frac{5}{7},\frac{1}{28}\right]\oplus\left[\frac{1}{7},\frac{45}{28}\right]\right\} \otimes\mathcal{L}\left(6,2\right)\\
			& \oplus\left\{ \left[0,\frac{13}{4}\right]\oplus\left[\frac{1}{7},\frac{3}{28}\right]\oplus\left[\frac{5}{7},\frac{15}{28}\right]\right\} \otimes\mathcal{L}\left(6,4\right)\\
			& \oplus\left\{ \left[0,\frac{15}{2}\right]\oplus\left[\frac{1}{7},\frac{5}{14}\right]\oplus\left[\frac{5}{7},\frac{39}{14}\right]\right\} \otimes\mathcal{L}\left(6,6\right)
		\end{align*}
		Thus $\text{\ensuremath{\mathcal{L}}}\left(3,0\right)\otimes\mathcal{L}\left(1,0\right)\otimes\mathcal{L}\left(1,0\right)\otimes\mathcal{L}\left(1,0\right)\oplus\mathcal{L}\left(3,3\right)\otimes\mathcal{L}\left(1,1\right)\otimes\mathcal{L}\left(1,0\right)\otimes\mathcal{L}\left(1,0\right)$
		and $V_{L}$ contain a vertex operator subalgebra isomorphic to
		\[
		\left[0,\ 0\right]\oplus\left[\frac{1}{7},\frac{34}{7}\right]\oplus\left[\frac{5}{7},\frac{9}{7}\right]
		\]
		which is isomorphic to $\mathcal{U}_{6A}$ from the uniqueness of $\mathcal{U}_{6A}$.
	\end{lemma}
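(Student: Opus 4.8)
The plan is to obtain the decomposition by iterating the GKO-construction \eqref{eq:(2.3)} three times, peeling off one coset Virasoro factor at each stage while the level of the affine factor climbs from $3$ to $6$. Concretely, pairing $\mathcal{L}(3,\cdot)$ with the first $\mathcal{L}(1,\cdot)$ and applying \eqref{eq:(2.3)} with $m=3$ splits off the central charge $c_5=4/5$ Virasoro modules and leaves an $\mathcal{L}(4,\cdot)$; pairing the result with the second $\mathcal{L}(1,0)$ and applying \eqref{eq:(2.3)} with $m=4$ splits off the $c_6=6/7$ Virasoro modules and leaves an $\mathcal{L}(5,\cdot)$; pairing once more with the third $\mathcal{L}(1,0)$ and applying \eqref{eq:(2.3)} with $m=5$ splits off the $c_7=25/28$ Virasoro modules and leaves the final $\mathcal{L}(6,\cdot)$. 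Thus every summand of the left-hand side is rewritten as a tensor product of the form $L(4/5,\cdot)\otimes L(6/7,\cdot)\otimes L(25/28,\cdot)\otimes\mathcal{L}(6,\cdot)$.

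The core of the argument is then bookkeeping of the Kac labels. First I would compute the highest weights appearing at each stage from the weight formula \eqref{minimal model notation}, using the symmetry $h^{(p)}_{(i',i)}=h^{(p)}_{(p+1-i',p-i)}$ to reduce any out-of-range label to the fundamental domain and retaining only the $s$ that correspond to genuine level-$(m+1)$ modules. Starting from the two summands $\mathcal{L}(3,0)\otimes\mathcal{L}(1,0)$ and $\mathcal{L}(3,3)\otimes\mathcal{L}(1,1)$, the first GKO step already produces each of $L(4/5,0)$ and $L(4/5,3)$ once (paired with $\mathcal{L}(4,0)$ and $\mathcal{L}(4,4)$) together with two copies of $L(4/5,2/3)$ (paired with $\mathcal{L}(4,2)$). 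This doubling is exactly the origin of the $\pm$-summands: $L(4/5,2/3)$ is a fixed point of the order-two simple current $L(4/5,3)$, so over the extension $L(4/5,0)\oplus L(4/5,3)$ it splits as $L(4/5,2/3)^{+}\oplus L(4/5,2/3)^{-}$, and the two GKO copies supply precisely these two pieces; the same phenomenon occurs at the $c=6/7$ stage. Regrouping the resulting $L(4/5,\cdot)\otimes L(6/7,\cdot)$ factors according to Lemma \ref{U_3A modules} recovers the $\mathcal{V}$-modules $\mathcal{V}(h_1)$, so the whole expression collapses to the claimed sum of $[h_1,h_2]\otimes\mathcal{L}(6,s)$.

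Finally, the $\mathcal{L}(6,0)$-isotypic component, which is the commutant (coset) of $\mathcal{L}(6,0)$ and hence carries a vertex operator algebra structure, is exactly $[0,0]\oplus[\tfrac{1}{7},\tfrac{34}{7}]\oplus[\tfrac{5}{7},\tfrac{9}{7}]$. Comparing with the decomposition $\mathcal{U}_{6A}\cong\mathcal{V}\otimes L(\tfrac{25}{28},0)\oplus\mathcal{V}(\tfrac{1}{7})\otimes L(\tfrac{25}{28},\tfrac{34}{7})\oplus\mathcal{V}(\tfrac{5}{7})\otimes L(\tfrac{25}{28},\tfrac{9}{7})$ recalled above, this coset has the same module decomposition as $\mathcal{U}_{6A}$; since $\mathcal{U}_{6A}$ has a unique vertex operator algebra structure by Theorem \ref{uniqueness of 6A-algebra}, the subalgebra is isomorphic to $\mathcal{U}_{6A}$, and the identical identification applies to the copy sitting inside $V_L$.

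The main obstacle I anticipate is not the three GKO steps themselves but controlling the fixed-point splittings coherently: one must check that the multiple copies of the fixed-point Virasoro modules produced by the GKO iteration are distributed among the $+$ and $-$ pieces in a way compatible with the cross (off-diagonal) pairings of Lemma \ref{U_3A modules}, rather than merely matching conformal weights and multiplicities. Verifying this compatibility, together with the reduction of the reflected Kac labels at each of the three levels, is where the real care is required.
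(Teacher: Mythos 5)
First, a point of comparison: the paper itself contains no proof of Lemma \ref{realization of VOA} --- it is recalled verbatim from \cite{DJY} (the section opens with ``Now we recall the following results in \cite{DJY}''), so your proposal can only be measured against the standard argument in that reference. Your outline is essentially that argument, and its computational core is sound: three applications of the branching rule \eqref{eq:(2.3)} with $m=3,4,5$ peel off $c_5=\frac45$, $c_6=\frac67$, $c_7=\frac{25}{28}$ in turn, and the Kac-label bookkeeping via \eqref{minimal model notation} comes out exactly as you say (e.g.\ $h^{(7)}_{(1,5)}=\frac{34}{7}$, $h^{(7)}_{(1,3)}=\frac97$, $h^{(7)}_{(3,3)}=\frac1{28}$, $h^{(7)}_{(5,3)}=\frac{15}{28}$), with $L(\frac45,\frac23)$ and later $L(\frac67,\frac43)$, $L(\frac67,\frac{10}{21})$, $L(\frac67,\frac1{21})$ appearing with multiplicity two. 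The identification of the $\mathcal{L}(6,0)$-isotypic component with the commutant, and the final appeal to Theorem \ref{uniqueness of 6A-algebra}, also match what the lemma's last sentence intends.

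The one place where your proposal stops short is exactly the point you flag --- the coherence of the $\pm$-splittings --- and here you should know that this does not need to be tracked through the simple-current extensions at all; there is a cleaner resolution using only facts the paper already recalls, and it also fixes a logical-order issue in your write-up. Before one can ``regroup according to Lemma \ref{U_3A modules}'' one must know the isotypic pieces are $\mathcal{V}$-modules in the first place, i.e.\ that $\mathcal{V}$ sits inside the left-hand side. So argue as follows: the sum of all summands multiplying $L(\frac{25}{28},0)\otimes\mathcal{L}(6,0)$ is the commutant of $L(\frac{25}{28},0)\otimes\mathcal{L}(6,0)$, hence a vertex operator algebra $W$ whose decomposition as an $L(\frac45,0)\otimes L(\frac67,0)$-module is $\left(L(\frac45,0)\oplus L(\frac45,3)\right)\otimes\left(L(\frac67,0)\oplus L(\frac67,5)\right)\oplus 2\,L(\frac45,\frac23)\otimes L(\frac67,\frac43)$; this coincides with the decomposition of the $3A$-algebra $\mathcal{V}$, so the uniqueness result of \cite{M,SY} gives $W\cong\mathcal{V}$. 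Now every $L(\frac{25}{28},h)\otimes\mathcal{L}(6,s)$-isotypic component is a $\mathcal{V}$-module, hence by rationality (Lemma \ref{rationality of 3A}) a direct sum of the six irreducibles of Lemma \ref{U_3A modules}; these six have pairwise distinct restrictions to $L(\frac45,0)\otimes L(\frac67,0)$, and each of them glues the $\pm$ pieces diagonally by construction, so matching Virasoro contents forces the stated decomposition and the off-diagonal pairings you worry about can never occur. Finally, the clause about $V_L$ is genuinely not covered by your proposal: ``the identical identification applies to the copy sitting inside $V_L$'' is not an argument, since exhibiting such a subalgebra of the lattice vertex operator algebra requires the coset construction of \cite{LYY} (note that $V_L$ is not even defined in the present paper --- this clause is inherited from \cite{DJY} and should be referred back there).
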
 
	
	\begin{lemma} \label{Real modules} The following list give 14 irreducible
		$\mathcal{U}_{6A}$-module.
		\begin{align*}
			M^{0} & =\left[0,0\right]\oplus\left[\frac{1}{7},\frac{34}{7}\right]\oplus\left[\frac{5}{7},\frac{9}{7}\right], & M^{1} & =\left[0,\frac{3}{4}\right]\oplus\left[\frac{1}{7},\frac{45}{28}\right]\oplus\left[\frac{5}{7},\frac{1}{28}\right],\\
			M^{2} & =\left[0,\frac{13}{4}\right]\oplus\left[\frac{1}{7},\frac{3}{28}\right]\oplus\left[\frac{5}{7},\frac{15}{28}\right], & M^{3} & =\left[0,\frac{15}{2}\right]\oplus\left[\frac{1}{7},\frac{5}{14}\right]\oplus\left[\frac{5}{7},\frac{39}{14}\right],\\
			M^{4} & =\left[0,\frac{165}{32}\right]\oplus\left[\frac{1}{7},\frac{3}{224}\right]\oplus\left[\frac{5}{7},\frac{323}{224}\right], & M^{5} & =\left[0,\frac{5}{32}\right]\oplus\left[\frac{1}{7},\frac{675}{224}\right]\oplus\left[\frac{5}{7},\frac{99}{224}\right],\\
			M^{6} & =\left[0,\frac{57}{32}\right]\oplus\left[\frac{1}{7},\frac{143}{224}\right]\oplus\left[\frac{5}{7},\frac{15}{224}\right], & M^{7} & =\left[\frac{2}{5},0\right]\oplus\left[\frac{19}{35},\frac{34}{7}\right]\oplus\left[\frac{39}{35},\frac{9}{7}\right],\\
			M^{8} & =\left[\frac{2}{5},\frac{3}{4}\right]\oplus\left[\frac{19}{35},\frac{45}{28}\right]\oplus\left[\frac{39}{35},\frac{1}{28}\right], & M^{9} & =\left[\frac{2}{5},\frac{13}{4}\right]\oplus\left[\frac{19}{35},\frac{3}{28}\right]\oplus\left[\frac{39}{35},\frac{15}{28}\right],\\
			M^{10} & =\left[\frac{2}{5},\frac{15}{2}\right]\oplus\left[\frac{19}{35},\frac{5}{14}\right]\oplus\left[\frac{39}{35},\frac{39}{14}\right], & M^{11} & =\left[\frac{2}{5},\frac{5}{32}\right]\oplus\left[\frac{19}{35},\frac{675}{224}\right]\oplus\left[\frac{39}{35},\frac{99}{224}\right],\\
			M^{12} & =\left[\frac{2}{5},\frac{57}{32}\right]\oplus\left[\frac{19}{35},\frac{143}{224}\right]\oplus\left[\frac{39}{35},\frac{15}{224}\right], & M^{13} & =\left[\frac{2}{5},\frac{165}{32}\right]\oplus\left[\frac{19}{35},\frac{3}{224}\right]\oplus\left[\frac{39}{35},\frac{323}{224}\right].
		\end{align*}
	\end{lemma}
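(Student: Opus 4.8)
The plan is to realize each $M^i$ as a multiplicity space in the decomposition of a module for the big vertex operator algebra
\[
A = \mathcal{L}(3,0)\otimes\mathcal{L}(1,0)^{\otimes 3} \oplus \mathcal{L}(3,3)\otimes\mathcal{L}(1,1)\otimes\mathcal{L}(1,0)^{\otimes 2}
\]
of Lemma \ref{realization of VOA}, viewed as a module for the commuting pair $\mathcal{U}_{6A} \otimes \mathcal{L}(6,0)$. First I would record that $W := \mathcal{V} \otimes L(\frac{25}{28},0)$ is a rational, $C_2$-cofinite simple vertex operator algebra, being a tensor product of the rational algebra of Lemma \ref{rationality of 3A} with a unitary Virasoro algebra; its irreducible modules are exactly the products $\mathcal{V}(h_1) \otimes L(\frac{25}{28}, h_2)$ with $\mathcal{V}(h_1)$ among the six modules of Lemma \ref{U_3A modules} and $h_2$ among the weights \eqref{highest weights for 25/28}. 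By inspection each $M^i$ is a direct sum of three such irreducible $W$-modules, hence a $W$-module; the real content of the lemma is that these three summands glue into a single irreducible module over the larger algebra $\mathcal{U}_{6A}$.

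For the module structure I would use the coset realization together with the iterated GKO decomposition \eqref{eq:(2.3)}. Applying \eqref{eq:(2.3)} three times to $\mathcal{L}(3,a)\otimes\mathcal{L}(1,b_1)\otimes\mathcal{L}(1,b_2)\otimes\mathcal{L}(1,b_3)$ passes successively through the central charges $c_5=\frac{4}{5}$, $c_6=\frac{6}{7}$ and $c_7=\frac{25}{28}$, producing a decomposition
\[
\mathcal{L}(3,a)\otimes\mathcal{L}(1,b_1)\otimes\mathcal{L}(1,b_2)\otimes\mathcal{L}(1,b_3) \cong \bigoplus_{s} \bigl(\text{$W$-module}\bigr)\otimes\mathcal{L}(6,s),
\]
in which the $L(\frac{4}{5},\cdot)$ and $L(\frac{6}{7},\cdot)$ factors assemble, via Lemma \ref{U_3A modules}, into the $\mathcal{V}$-modules appearing in the $M^i$. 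Since $\mathcal{U}_{6A}$ is by construction the commutant of $\mathcal{L}(6,0)$ in $A$, the $\mathcal{L}(6,s)$-isotypic component of any irreducible $A$-module carries a natural $\mathcal{U}_{6A}$-action on its multiplicity space, and by the standard double-commutant/dual-pair principle this multiplicity space is an irreducible $\mathcal{U}_{6A}$-module. The decomposition of $A$ itself (Lemma \ref{realization of VOA}) already yields $M^0,\dots,M^3$; decomposing the remaining irreducible $A$-modules — those built from $\mathcal{L}(3,a)$ chosen to produce the $L(\frac{4}{5},\frac{2}{5})$-type content $\mathcal{V}(\frac{2}{5}),\mathcal{V}(\frac{19}{35}),\mathcal{V}(\frac{39}{35})$, and with an odd number of $\mathcal{L}(1,1)$-factors to reach the odd-$s$ sectors of weights $\frac{5}{32},\frac{57}{32},\frac{165}{32}$ — yields $M^4,\dots,M^{13}$.

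Irreducibility of each $M^i$ then follows from its occurrence as the multiplicity space of an irreducible affine module $\mathcal{L}(6,s)$ inside an irreducible $A$-module with $\mathcal{U}_{6A}$ the full commutant, and pairwise inequivalence follows by comparing lowest conformal weights, equivalently the distinct lists of $W$-module summands. As a consistency check one may confirm via Theorem \ref{global dim of VOA and subVOA} and Proposition \ref{quantum dimension and simple current} that the global and quantum dimensions of $\mathcal{U}_{6A}$ are compatible with precisely these modules.

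I expect the main obstacle to be the bookkeeping in the second paragraph: carrying out the three nested GKO branchings explicitly and matching the resulting triples of highest weights, through formula \eqref{minimal model notation}, against the entries of each $M^i$, while simultaneously verifying that the three $W$-isotypic summands are genuinely permuted into one another by the off-diagonal part $P_2\otimes Q_2 \oplus P_3 \otimes Q_3$ of $\mathcal{U}_{6A}$, so that each $M^i$ is indecomposable rather than merely a direct sum of $W$-modules. Tracking the parity and gluing constraints that determine which quadruples $(a,b_1,b_2,b_3)$ give admissible $A$-modules, and hence which $\mathcal{V}$- and Virasoro-weights may co-occur, is where the genuine work lies.
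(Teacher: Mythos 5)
The paper itself contains no proof of this lemma: it is recalled verbatim from \cite{DJY}, and, as the paper points out in Problem 7.1, the determination of the irreducible $\mathcal{U}_{6A}$-modules there rests on quantum dimension theory --- one computes $\mathrm{glob}(\mathcal{U}_{6A})$ via Theorem \ref{global dim of VOA and subVOA} applied to the extension $\mathcal{V}\otimes L\left(\frac{25}{28},0\right)\subset\mathcal{U}_{6A}$, computes the quantum dimensions of the candidate modules, and uses the resulting counting (together with fusion-rule arguments) to force irreducibility and completeness. Your realization of the candidates through iterated GKO branchings is a legitimate way to exhibit each $M^{i}$ as a $\mathcal{U}_{6A}$-module, and comparing the lists of $\mathcal{V}\otimes L\left(\frac{25}{28},0\right)$-summands does give pairwise inequivalence; the problem is your proof of irreducibility.

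The gap is concrete: the ``standard double-commutant/dual-pair principle'' you invoke is not a theorem of vertex operator algebra theory. For a coset pair $\mathcal{U}_{6A}\otimes\mathcal{L}(6,0)\subset A$ and an irreducible $A$-module $M$, the multiplicity space $\mathrm{Hom}_{\mathcal{L}(6,0)}\left(\mathcal{L}(6,s),M\right)$ is a priori only a finite direct sum of irreducible $\mathcal{U}_{6A}$-modules (using rationality, Remark \ref{rationality-1}); that it is a \emph{single} irreducible one is precisely what the lemma asserts, and nothing general guarantees it. The only result of this type available in the paper is Theorem \ref{c1} (Lin's mirror-extension theorem), and it applies only to the decomposition of the extension algebra itself: applied to $A\supset\mathcal{U}_{6A}\otimes\mathcal{L}(6,0)$ (after verifying simplicity, self-duality and the Hom-conditions, which follow from the commutant property and the explicit decomposition of Lemma \ref{realization of VOA}), it covers $M^{0},\dots,M^{3}$ only. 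For the ten modules $M^{4},\dots,M^{13}$, which occur inside \emph{other} irreducible $A$-modules, no such principle exists, and the step you relegate to ``bookkeeping'' in your final paragraph --- showing that the off-diagonal part $P_{2}\otimes Q_{2}\oplus P_{3}\otimes Q_{3}$ of $\mathcal{U}_{6A}$ maps the three isotypic summands of each candidate onto one another nontrivially, so that any nonzero submodule (necessarily a sum of full isotypic summands, by multiplicity-freeness of the restriction) is the whole module --- is not a consistency check but the entire proof of irreducibility; the alternative is the quantum-dimension count of \cite{DJY}. As written, the proposal assumes the conclusion for the majority of the list.
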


\subsection{Mirror extensions for rational vertex operator algebras}
A vertex operator algebra $V$ is said to be {\em regular} if any weak $V$-module
$M$ is a direct sum of irreducible $V$-modules.

We now assume that $V$ is a  vertex operator algebra satisfying the following conditions:

(1) $V$ is simple CFT type vertex operator algebra  and is self dual;

(2) $V$ is regular.

The following results were obtained in \cite{ABD}, \cite{L}.
\begin{theorem}
Let $V$ be a $CFT$ type vertex operator algebra. Then $V$ is regular if and only if $V$ is rational and $C_2$-cofinite.
\end{theorem}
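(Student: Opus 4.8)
The plan is to establish the two implications separately, following the circle of ideas of Li \cite{L} and Abe--Buhl--Dong \cite{ABD}.

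Suppose first that $V$ is regular. Rationality is immediate: any admissible $V$-module is in particular a weak $V$-module, so by regularity it decomposes as a direct sum of irreducible $V$-modules, whence the admissible module category is semisimple and $V$ is $1$-rational. The substantive part of this direction is that regularity forces $C_2$-cofiniteness. Here I would argue by contradiction, assuming $\dim V/C_2(V)=\infty$. The strategy of \cite{ABD} is first to invoke Li's result \cite{L} that a regular vertex operator algebra is rational and finitely generated, and then to use finite generation together with the failure of $C_2$-cofiniteness to construct a weak $V$-module whose $L(0)$-grading is incompatible with being a direct sum of ordinary irreducible modules (for instance, having graded pieces of infinite dimension or grading unbounded below). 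This contradicts regularity and yields $\dim V/C_2(V)<\infty$. I expect this to be the \emph{main obstacle}, since the explicit construction of the offending weak module and the accompanying growth estimates are the delicate content of \cite{ABD}.

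Conversely, suppose $V$ is rational and $C_2$-cofinite. Choose homogeneous $u_1,\dots,u_s\in V$ whose images form a basis of $V/C_2(V)$. The key input is the $C_2$-cofinite spanning argument of \cite{ABD}: for any weak $V$-module $M$ and any $w\in M$, the cyclic weak submodule $\langle w\rangle$ is spanned by vectors of the form $(u_{i_1})_{-n_1}\cdots(u_{i_k})_{-n_k}w$ with $n_1\ge n_2\ge\cdots\ge n_k\ge 1$ and with bounded multiplicities among equal indices. Since $V$ is of CFT type, this spanning set shows that $\langle w\rangle$ carries a $\mathbb{Z}_+$-grading by $L(0)$-weight which is bounded below and compatible with the module structure; that is, $\langle w\rangle$ is an admissible $V$-module. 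By rationality, $\langle w\rangle$ is a direct sum of irreducible admissible modules, each of which is ordinary; in particular $w$ lies in a sum of irreducible submodules of $M$.

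Finally I would assemble the global decomposition. Let $N\subseteq M$ be the sum of all irreducible $V$-submodules of $M$. If $N\ne M$, choose $w\in M\setminus N$; by the previous step $\langle w\rangle$ is a sum of irreducible submodules, forcing $w\in N$, a contradiction. Hence $M=N$ is a sum of irreducible submodules, and the standard semisimplicity argument (Zorn's lemma applied to families of irreducible submodules whose sum is direct, selecting a maximal such family) upgrades this to a direct sum of irreducible $V$-modules. Therefore every weak $V$-module is completely reducible, so $V$ is regular, completing the equivalence.
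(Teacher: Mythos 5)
The first thing to note is that the paper does not prove this theorem at all: it is stated as a known result, with the sentence ``The following results were obtained in \cite{ABD}, \cite{L}'' serving as the entire proof. So your proposal is really being measured against Li \cite{L} and Abe--Buhl--Dong \cite{ABD}, and at that level it identifies the right sources and the right overall shape (the easy implication regular $\Rightarrow$ rational; the hard implications regular $\Rightarrow$ $C_2$-cofinite and rational $+$ $C_2$-cofinite $\Rightarrow$ regular). However, it does not actually supply either hard implication, and one of its key steps is wrong as stated.

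Two concrete gaps. First, the implication regular $\Rightarrow$ $C_2$-cofinite is Li's theorem in \cite{L}; it is not an argument of \cite{ABD} built on ``finite generation plus failure of $C_2$-cofiniteness,'' and your paragraph on this direction contains no construction at all --- you explicitly defer the offending weak module and the growth estimates to the literature. So that direction is asserted, not proven. Second, in the converse direction the step that would fail is: ``this spanning set shows that $\langle w\rangle$ carries a $\mathbb{Z}_+$-grading by $L(0)$-weight \ldots\ that is, $\langle w\rangle$ is an admissible $V$-module.'' A weak module carries no a priori grading: $L(0)$ need not act semisimply, or even locally finitely, on $\langle w\rangle$, so one cannot simply grade by $L(0)$-eigenvalues. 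Moreover your spanning set is misstated: since $w$ need not be annihilated by positive modes, the correct statement (Buhl's theorem, as used in \cite{ABD}) spans $\langle w\rangle$ by vectors $x^1_{-n_1}\cdots x^k_{-n_k}w$ with $n_1>n_2>\cdots>n_k>-T$ for some $T$ depending on $w$ --- modes bounded below by $-T$ and strictly decreasing, hence without repetition --- not by monomials with all $n_i\ge 1$ and ``bounded multiplicities.'' The actual route in \cite{ABD} uses this boundedness to produce a nonzero vector killed by all modes of sufficiently positive degree, so that the submodule it generates is a quotient of a generalized Verma-type (hence admissible) module; only then can rationality be applied, and the final step (that the sum of all irreducible ordinary submodules is all of $M$) again needs the spanning set, because an extension of admissible modules inside the weak category is not obviously admissible. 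Your closing Zorn's-lemma argument is fine, but it sits on top of these unproven steps.
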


We have the
following results which were proved in \cite{FHL}, \cite{DMZ} and \cite{ABD}.
\begin{theorem}\label{kvoa3}
Let $V^1, \cdots, V^p$ be  vertex operator algebras. If $V^1, \cdots, V^p$ are rational, then
$V^1\otimes \cdots \otimes V^p$  is rational and any irreducible $V^1\otimes \cdots \otimes V^p$-module is a tensor
product $M^1\otimes \cdots \otimes M^p$ , where $M^1, \cdots, M^p$ are
some irreducible modules for the vertex operator algebras $V^1,
\cdots, V^p,$ respectively. If $V^1, \cdots, V^p$ are $C_2$-cofinite, then $V^1\otimes \cdots \otimes V^p$  is $C_2$-cofinite.
In particular, if $V^1, \cdots, V^p$ are  vertex operator algebras satisfying conditions $(1)$ and $(2)$, then $V^1\otimes \cdots \otimes V^p$ satisfies conditions $(1)$ and $(2)$.
\end{theorem}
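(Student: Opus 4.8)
The plan is to reduce to the case $p=2$ by induction and then treat the three assertions in turn. Since the tensor product of vertex operator algebras is associative up to isomorphism \cite{FHL}, we have $V^1\otimes\cdots\otimes V^p\cong(V^1\otimes\cdots\otimes V^{p-1})\otimes V^p$, so it suffices to prove each statement for $V:=V^1\otimes V^2$; the general case then follows by induction on $p$.

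For rationality together with the module classification, I would first show that if $M^1,M^2$ are irreducible modules over $V^1,V^2$ respectively, then $M^1\otimes M^2$ is an irreducible $V$-module. The tool is a density argument: since each $M^i$ is irreducible one has $\mathrm{Hom}_{V^i}(M^i,M^i)=\mathbb{C}$, and the operators $\{a_n:a\in V^i,\,n\in\mathbb{Z}\}$ act densely on $M^i$. Given a nonzero $V$-submodule $W\subseteq M^1\otimes M^2$ and an element $0\neq w=\sum_i u_i\otimes v_i\in W$ with $\{v_i\}$ linearly independent, one applies suitable operators from $V^1\otimes\mathbf{1}$ to produce $u\otimes v_{i_0}$ for arbitrary $u\in M^1$, and then operators from $\mathbf{1}\otimes V^2$ to produce $u\otimes v$ for arbitrary $v\in M^2$, forcing $W=M^1\otimes M^2$. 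Conversely, for an irreducible $V$-module $M$, I restrict to $V^1\otimes\mathbf{1}$: by rationality of $V^1$, $M$ decomposes into irreducible $V^1$-modules, and choosing an irreducible constituent $M^1$ the multiplicity space $\mathrm{Hom}_{V^1}(M^1,M)$ inherits a $V^2$-module structure, because the modes of $\mathbf{1}\otimes V^2$ commute with those of $V^1\otimes\mathbf{1}$ on any $V$-module; its irreducibility (forced by that of $M$) gives $M\cong M^1\otimes M^2$. For rationality itself I take an arbitrary admissible $V$-module $M$, decompose it as an admissible $V^1$-module into $\bigoplus_i M^1_i\otimes H_i$ with $M^1_i$ distinct irreducibles and $H_i=\mathrm{Hom}_{V^1}(M^1_i,M)$ the multiplicity spaces, note each $H_i$ is an admissible $V^2$-module, and apply rationality of $V^2$ to split each $H_i$ into irreducibles, yielding a decomposition of $M$ into irreducible tensor products.

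For $C_2$-cofiniteness the argument is direct. Using $Y(a\otimes\mathbf{1},z)=Y^1(a,z)\otimes\mathrm{id}$ and $Y(\mathbf{1}\otimes b,z)=\mathrm{id}\otimes Y^2(b,z)$, one checks $(a\otimes\mathbf{1})_{-2}(b\otimes c)=(a_{-2}b)\otimes c$ and $(\mathbf{1}\otimes b)_{-2}(c\otimes d)=c\otimes(b_{-2}d)$, so that $C_2(V^1)\otimes V^2+V^1\otimes C_2(V^2)\subseteq C_2(V)$. Hence the canonical surjection factors through $(V^1/C_2(V^1))\otimes(V^2/C_2(V^2))$, which is finite dimensional whenever both factors are $C_2$-cofinite; therefore $V$ is $C_2$-cofinite.

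Finally, for the last assertion I combine the above with the preservation of the remaining structural properties. Simplicity of $V$ is the case $M^1=V^1$, $M^2=V^2$ of the irreducibility statement; the CFT-type condition follows since conformal weights add, giving $V_0=V^1_0\otimes V^2_0=\mathbb{C}\mathbf{1}$ and no negative weights; and self-duality follows from $(V^1\otimes V^2)'\cong(V^1)'\otimes(V^2)'\cong V^1\otimes V^2$ \cite{FHL}. Together with the equivalence between regularity and (rationality plus $C_2$-cofiniteness) for CFT-type algebras proved just above, this shows that conditions $(1)$ and $(2)$ are inherited by $V$. I expect the genuinely delicate step to be the converse direction of the module classification—extracting the tensor-product structure from an abstract irreducible $V$-module via multiplicity spaces and the density/commuting-modes argument—whereas the $C_2$-cofiniteness computation and the structural properties in $(1)$ are comparatively formal.
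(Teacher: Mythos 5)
The paper gives no proof of this theorem --- it is quoted from \cite{FHL}, \cite{DMZ} and \cite{ABD} --- and your argument is a correct reconstruction of the standard proofs in those references: the Jacobson-density/multiplicity-space argument for the module classification and rationality, the inclusion $C_2(V^1)\otimes V^2+V^1\otimes C_2(V^2)\subseteq C_2(V^1\otimes V^2)$ for cofiniteness, and the regularity-equals-rational-plus-$C_2$-cofinite equivalence for the final assertion. The only point to tidy is in the density step: choose the expression $w=\sum_i u_i\otimes v_i$ so that both $\{u_i\}$ and $\{v_i\}$ are linearly independent (e.g.\ a minimal-length expression), since applying Jacobson density on $M^1$ to send $u_{i_0}\mapsto u$ while annihilating the other $u_i$ requires their linear independence.
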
    


We use the following theorem in \cite{Lin} to prove planty of essential results.

Let $V^1$, $V^2$ be  vertex operator algebras satisfying conditions (1) and (2). Denote  the module categories of $V^1$, $V^2$ by  $\mathcal{C}_{V^1}$, $\mathcal{C}_{V^2}$, respectively. 
If $V$ is an extension vertex operator algebra of $V^1\otimes V^2$, it is known that $V^1\otimes V^2$ is rational by \cite{FHL}, 
\cite{DMZ} and \cite{ABD},  then  $V$ has the following decomposition as $V^1\otimes V^2$-module:
$$V=\oplus_{i\in I, j\in J}Z_{i,j}M^i\otimes N^j,$$
where $Z_{i, j}$, $(i\in I, j\in J)$,  are nonnegative integers  and $\{M^i|i\in I\}$ (resp. $\{N^j|j\in J\}$) are inequivalent 
irreducible $V^1$-modules (resp. $V^2$-modules) such that $Z_{i, j}\neq 0$  for some $(i,j)\in I\times J$. 
In the following we  assume that $V$ is an  extension vertex operator algebra  of $V^1\otimes V^2$ such that $V$ is simple, self dual
and $\hom_{V^1\otimes V^2}(V^1\otimes N^j, V)=\mathbb{C}$ (resp. $\hom_{V^1\otimes V^2}(M^i\otimes V^2, V)=\mathbb{C}$) if and only if $N^j=V^2$ (resp. $M^i=V^1$).
Then we have

\begin{theorem}\label{c1}
(i) $Z_{i,j}=1$ if $Z_{i,j}\neq 0$. Moreover, this induces a bijection $\tau$ from $I$ to $J$ such that $\tau(i)=j$ if and only if $Z_{i, j}=1$. \\
(ii) The set $\{M^i| i\in I\}$ $($resp. $\{N^j| j\in J\})$ is closed under the tensor product of category $\mathcal{C}_{V^1}$ $($resp. $\mathcal{C}_{V^2})$. \\
(iii) For any $i_1, i_2,i_3\in I$,  $N_{M^{i_1}, M^{i_2}}^{M^{i_3}}=N_{(N^{\tau(i_1)})', (N^{\tau(i_2)})'}^{(N^{\tau(i_3)})'}$.
\end{theorem}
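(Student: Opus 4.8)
The plan is to treat $V$ as a commutative, self-dual algebra extension of the rational vertex operator algebra $V^1\ot V^2$, and to extract the combinatorial data $Z_{i,j}$ from three ingredients: the nondegenerate invariant bilinear form coming from self-duality (Theorem \ref{bilinear form}), the factorization of intertwining operators over a tensor product (Proposition \ref{fusion of tensor product}), and the simplicity of $V$. Throughout I normalize $M^0=V^1$, $N^0=V^2$ and write $U^{(i,j)}=M^i\ot N^j$; restricting the vertex operator $Y_V$ to the summands of the decomposition produces, by the Jacobi identity, intertwining operators of type $\ito{U^{(i_1,j_1)}}{U^{(i_2,j_2)}}{U^{(i_3,j_3)}}$ for $V^1\ot V^2$, and Proposition \ref{fusion of tensor product} splits each such space as a tensor product of a $V^1$-intertwining space and a $V^2$-intertwining space. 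This factorization is the engine that transports information between the two sides.

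For part (i), first note that the hypothesis forces $Z_{0,0}=\dim\hom_{V^1\ot V^2}(V^1\ot V^2,V)=1$, and that the commutant $C_V(V^1)$ is, as a $V^2$-module, exactly $\bigoplus_j Z_{0,j}N^j$ (a vector of $M^i\ot N^j$ killed by $V^1_{\ge 0}$ must come from a vacuum-like vector of $M^i$, which forces $M^i\cong V^1$); the $\hom$-assumption identifies this commutant with $V^2$, so $Z_{0,j}=\delta_{j,0}$ and symmetrically $Z_{i,0}=\delta_{i,0}$. The invariant bilinear form pairs the $U^{(i,j)}$-isotypic component of $V$ nondegenerately with the $(M^i)'\ot(N^j)'$-isotypic component, whence $Z_{i,j}=Z_{i^\ast,j^\ast}$ for the contragredient indices. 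To obtain multiplicity one and the bijection, I would fix $i$ and study the multiplicity space $W_i=\bigoplus_j Z_{i,j}N^j$, which is naturally a module for $C_V(V^1)=V^2$, and show that each $W_i$ is a single irreducible $V^2$-module. This I would deduce from simplicity of $V$: multiplying the $M^i$-isotypic part by the vacuum block $U^{(0,0)}$ keeps us inside it and by the dual block returns the vacuum, so the pairing $U^{(i,j)}\times(M^i)'\ot(N^j)'\to U^{(0,0)}$ is nonzero; combined with the factorization above and the fact that the fusion of an irreducible module with its contragredient contains the vacuum with multiplicity one, this pins the multiplicities to $Z_{i,j}\in\{0,1\}$ with a unique $j=\tau(i)$ for each $i$, and the contragredient symmetry together with $Z_{i,0}=\delta_{i,0}$ makes $\tau$ a bijection.

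For parts (ii) and (iii) I would use the algebra structure directly. Given $M^{i_1},M^{i_2}\in\{M^i\}$, choose nonzero vectors in $U^{(i_1,\tau(i_1))}$ and $U^{(i_2,\tau(i_2))}$; since $V$ is simple their product is nonzero, lies in $V$, and by the factorization of intertwining operators every summand $U^{(i_3,j_3)}$ it meets has $N_{M^{i_1},M^{i_2}}^{M^{i_3}}\neq0$ and $N_{N^{\tau(i_1)},N^{\tau(i_2)}}^{N^{j_3}}\neq0$ with $j_3=\tau(i_3)$, giving closure of $\{M^i\}$ under fusion, i.e. (ii). For the fusion-rule equality (iii) I would compare dimensions of intertwining spaces through the commutativity and associativity of the vertex operators of $V$ (Proposition \ref{extension property}): the restriction of $Y_V$ to $U^{(i_1,\tau i_1)}\times U^{(i_2,\tau i_2)}\to U^{(i_3,\tau i_3)}$ lies in $I_{V^1}\ito{M^{i_1}}{M^{i_2}}{M^{i_3}}\ot I_{V^2}\ito{N^{\tau i_1}}{N^{\tau i_2}}{N^{\tau i_3}}$, and self-duality converts the $V^2$-factor, via the bilinear form, into one of contragredient type $\ito{(N^{\tau i_3})'}{\ \cdot\ }{\ \cdot\ }$. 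Counting these operators and using that simplicity forces them to be nonzero in each allowed channel, one shows the natural map between the $V^1$-side and the dualized $V^2$-side is an isomorphism, yielding $N_{M^{i_1},M^{i_2}}^{M^{i_3}}=N_{(N^{\tau(i_1)})',(N^{\tau(i_2)})'}^{(N^{\tau(i_3)})'}$.

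The hard part will be the exact equality in (iii) rather than an inequality: the product in $V$ manifestly realizes only a subspace of $I_{V^1}(\cdots)\ot I_{V^2}(\cdots)$, and showing it is the full diagonal, so that the two fusion multiplicities coincide and the correspondence is an isomorphism rather than a bound, requires a genuine nondegeneracy argument. I expect to control this by a global-dimension bookkeeping via Theorem \ref{global dim of VOA and subVOA} together with the product property of quantum dimensions (Proposition \ref{quantum dimension and simple current}), checking that $q\dim_{V^1}M^i=q\dim_{V^2}N^{\tau(i)}$ so that no channel can be lost, and by invoking associativity and commutativity of the correlation functions to rule out degeneracies among the chosen component intertwining operators. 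Establishing this nonvanishing simultaneously for all triples is the crux on which the whole theorem rests.
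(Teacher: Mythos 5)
There is a genuine gap here, and it is worth stating up front that this paper never proves Theorem \ref{c1} at all: it is quoted from \cite{Lin}, where the proof runs through the Huang--Lepowsky vertex tensor category structure on the module categories of rational $C_2$-cofinite VOAs, rigidity and modularity, and the theory of algebras and induced modules in braided tensor categories (cf.\ \cite{HKL}, \cite{DJX2}). Your proposal attempts to replace that machinery with the elementary toolkit of Section 2 (invariant bilinear form, the factorization of Proposition \ref{fusion of tensor product}, simplicity, quantum dimensions), and it breaks down at exactly the points where the categorical machinery is indispensable.

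Concretely: (a) In part (i), your ingredients yield at most a nondegenerate pairing between the multiplicity spaces $W_{i,j}=\mathrm{Hom}_{V^1\otimes V^2}(M^i\otimes N^j,V)$ and $W_{i^*,j^*}$, hence $Z_{i,j}=Z_{i^*,j^*}$; nothing in ``simplicity $+$ vacuum appears once in $M\boxtimes M'$'' bounds $Z_{i,j}$ by $1$, since a nondegenerate pairing exists just as well between two $2$-dimensional spaces. Multiplicity-freeness genuinely needs the hom hypotheses fed through Frobenius reciprocity for the induction functor $X\mapsto V\boxtimes_{V^1\otimes V^2}X$: without those hypotheses it is simply false (take $V=W\otimes V^2$ with $V^1=W^G$ for a nonabelian $G$; by Theorem \ref{classical galois theory} some $Z_{i,0}=\dim M_\chi\geq 2$), and your argument uses the hypotheses only to normalize row and column $0$, so it cannot distinguish the two situations. (b) In part (ii), producing one nonzero product vector shows that $M^{i_1}\boxtimes_{V^1}M^{i_2}$ meets the set $\{M^i\}$; closure requires that \emph{every} irreducible summand of the abstract fusion product lies in $\{M^i\}$, and a fusion channel not realized inside $V$ is invisible to the multiplication map $Y_V$, so your argument cannot exclude it. (c) In part (iii), a nonzero vector of $I_{V^1}(\cdots)\otimes I_{V^2}(\cdots)$ carries no information relating the dimensions of the two tensor factors, and your fallback---quantum-dimension bookkeeping---only produces weighted-sum identities of the form $\sum_k N^{(1)}_k\,q\dim(\cdot)=\sum_k N^{(2)}_k\,q\dim(\cdot)$, which force termwise equality only if one already has a one-sided inequality $N^{(1)}_k\leq N^{(2)}_k$ for all channels simultaneously; no step of your proposal produces such an inequality. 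In \cite{Lin} that equality is obtained by constructing a braid-reversed tensor equivalence $M^i\mapsto (N^{\tau(i)})'$ from the algebra structure of $V$ in $\mathcal{C}_{V^1}\boxtimes\mathcal{C}_{V^2}$ (this is also why the contragredients appear in the statement); that construction, not dimension counting, is the missing idea.
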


\section{Construction of $U_k^i$ and $\{\mathcal{U}_k\}$}
Let $\mathcal{U}_{0}=\mathcal{V}$ be the $3A$-algebra and $\mathcal{U}_1=\mathcal{U}_{\operatorname{6A}}$.
By GKO-construction and Lemma (\ref{realization of VOA}) we can write
$$\left(\mathcal{L}\left(3,0\right)\otimes\mathcal{L}\left(1,0\right)\otimes\mathcal{L}\left(1,0\right)\otimes\mathcal{L}\oplus\mathcal{L}\left(3,3\right)\otimes\mathcal{L}\left(1,1\right)\otimes\mathcal{L}\left(1,0\right)\right)\otimes\mathcal{L}(1,0)\cong\bigoplus_{i=0,\rm{even}}^6\mathcal{U}_{1,i}\otimes \mathcal{L}(6,i),$$
where $\mathcal{U}_{1,i}$ is defined by the summand before $\mathcal{L}(6,i)$ 
and, especially, $\mathcal{U}_{1,0}=\mathcal{U}_{1}$ is the commutant of $\mathcal{L}(6,0)$.

\begin{example}
It is shown in Lemma (\ref{realization of VOA}) that 
$$\mathcal{U}_1=\mathcal{U}_{1,0}= U_0^1\oplus U_0^3\oplus U_0^5
=\mathcal{V}\otimes L(\frac{25}{28},0)\oplus\mathcal{V}(\frac{5}{7})\otimes L(\frac{25}{28},\frac{9}{7})\oplus \mathcal{V}(\frac{1}{7})\otimes L(\frac{25}{28},\frac{34}{7}),$$
and
$$
\begin{aligned}
\mathcal{U}_{1,2}&=\left[0,\frac{3}{4}\right]\oplus\left[\frac{5}{7},\frac{1}{28}\right]\oplus\left[\frac{1}{7},\frac{45}{28}\right],\\
\mathcal{U}_{1,4}&=\left[0,\frac{13}{4}\right]\oplus\left[\frac{1}{7},\frac{3}{28}\right]\oplus\left[\frac{5}{7},\frac{15}{28}\right],\\
\mathcal{U}_{1,6}&=\left[0,\frac{15}{2}\right]\oplus\left[\frac{1}{7},\frac{5}{14}\right]\oplus\left[\frac{5}{7},\frac{39}{14}\right].
\end{aligned}
$$
\end{example}

For $k\geq 2$, $\mathcal{U}_{k}=\mathcal{U}_{k,0}$ is defined by
the commutant VOA of $\mathcal{L}(k+5,0)$ in GKO-construction,
and its modules $\mathcal{U}_{k,i}$ are defined by the summand paired with $\mathcal{L}(k+5,i)$.
That is, we write GKO-constructions as
$$\begin{aligned}
&\left(\mathcal{L}\left(3,0\right)\otimes\mathcal{L}\left(1,0\right)\otimes\mathcal{L}\left(1,0\right)\otimes\mathcal{L}\oplus\mathcal{L}\left(3,3\right)\otimes\mathcal{L}\left(1,1\right)\otimes\mathcal{L}\left(1,0\right)\right)\otimes\mathcal{L}(1,0)^{\otimes k}\\
\cong&\bigoplus_{i}\mathcal{U}_{k,i}\otimes \mathcal{L}(k+5,i).
\end{aligned}
$$
For simplicity, we define 
$\left[i-1,h\right]_k:=
\mathcal{U}_{k,i-1}\otimes L\left(c_{k+7},h\right)$ for odd $i$.

Specifically, we have
$$\mathcal{U}_{k,0}=\bigoplus_{i} U_{k-1}^{i}$$
where
$$U_k^{i}=\left[i-1,h_{(1,i)}^{(k+7)}\right]_k=
\mathcal{U}_{k,i-1}\otimes L\left(c_{k+7},h_{(1,i)}^{(k+7)}\right).$$


\begin{example}\label{U2}
We give a precise construction of $\mathcal{U}_2$.
Taking the commutant of $\mathcal{L}(7,0)$,
we have
$L(\frac{11}{12},0)$, $L(\frac{11}{12},\frac{5}{4})$, $L(\frac{11}{12},\frac{19}{4})$, and $L(\frac{11}{12},\frac{21}{2})$ 
with $(m,n)=(6,0),(6,2),(6,4)$, and $(6,6)$.
Similarly as Lemma (\ref{realization of VOA}), we have
\begin{equation*}
\begin{aligned}
&\mathcal{L}\left(3,0\right)\otimes\mathcal{L}\left(1,0\right)\otimes\mathcal{L}\left(1,0\right)\otimes\mathcal{L}\left(1,0\right)\otimes\mathcal{L}\left(1,0\right)\\
&\oplus\mathcal{L}\left(3,3\right)\otimes\mathcal{L}\left(1,1\right)\otimes\mathcal{L}\left(1,0\right)\otimes\mathcal{L}\left(1,0\right)\otimes\mathcal{L}\left(1,0\right)\\
\cong&\left\{\left[0,0\right]_1\oplus\left[2,\frac{5}{4}\right]_1\oplus\left[4,\frac{19}{4}\right]_1\oplus\left[6,\frac{21}{2}\right]_1\right\}\otimes\mathcal{L}\left(7,0\right)\\
&\oplus\left\{\left[0,\frac{7}{9}\right]_1\oplus\left[2,\frac{1}{36}\right]_1\oplus\left[4,\frac{55}{36}\right]_1\oplus\left[6,\frac{95}{18}\right]_1\right\}\otimes\mathcal{L}\left(7,2\right)\\
&\oplus\left\{\left[0,\frac{10}{3}\right]_1\oplus\left[2,\frac{7}{12}\right]_1\oplus\left[4,\frac{1}{12}\right]_1\oplus\left[6,\frac{11}{6}\right]_1\right\}\otimes\mathcal{L}\left(7,4\right)\\
&\oplus\left\{\left[0,\frac{23}{3}\right]_1\oplus\left[2,\frac{35}{12}\right]_1\oplus\left[4,\frac{5}{12}\right]_1\oplus\left[6,\frac{1}{6}\right]_1\right\}\otimes\mathcal{L}\left(7,6\right).
\end{aligned}
\end{equation*}

So
$$U_1^{i}=\left[i-1,h_{(1,i)}^{(8)}\right]_1=\mathcal{U}_{1,i-1}\otimes L\left(\frac{11}{12},h_{(1,i)}^{(8)}\right)$$
and we have
$$
\begin{aligned}
\mathcal{U}_{2}=&\mathcal{U}_{2,0}=U_1^1\oplus U_1^3\oplus U_1^5\oplus U_1^7
\\= &\left[0,0\right]_1
\oplus\left[2,\frac{5}{4}\right]_1\oplus\left[4,\frac{19}{4}\right]_1\oplus\left[6,\frac{21}{2}\right]_1,\\
\mathcal{U}_{2,2}=&\left[0,\frac{7}{9}\right]_1\oplus\left[2,\frac{1}{36}\right]_1\oplus\left[4,\frac{55}{36}\right]_1\oplus\left[6,\frac{95}{18}\right]_1,\\
\mathcal{U}_{2,4}=&\left[0,\frac{10}{3}\right]_1\oplus\left[2,\frac{7}{12}\right]_1\oplus\left[4,\frac{1}{12}\right]_1\oplus\left[6,\frac{11}{6}\right]_1,\\
\mathcal{U}_{2,6}=&\left[0,\frac{23}{3}\right]_1\oplus\left[2,\frac{35}{12}\right]_1\oplus\left[4,\frac{5}{12}\right]_1\oplus\left[6,\frac{1}{6}\right]_1.
\end{aligned}
$$
\end{example}


It is known by Theorem (\ref{uniqueness of 6A-algebra}) that $\mathcal{U}_{1}$ has a unique VOA structure and it satisfies conditions (1) and (2).
For $k\geq 2$, assume $\mathcal{U}_{k-1}$ have a unique VOA structure
and it satisfies conditions (1) and (2).
We will prove that there is a unique VOA structure on $\mathcal{U}_k$ satisfying these conditions.

\begin{remark}\label{rmk}
We simplify $\mathcal{U}_k$, $\mathcal{U}_{k-1,i}$, $L\left(c_{k+7},h_{(1,i)}^{(k+7)}\right)$ and $U_{k-1}^i$ 
by $\mathcal{U}$, $P^i$, $Q^i$ and $U^i$ respectively from now on,
unless otherwise specified.
\end{remark}

If there are no possible ambigious,
we will write $u^a$, or simply $a$, for elements in a module $U^a$.
We use $N,B,\mathcal{Y}$ (and $\tilde{N},\tilde{B},\tilde{\mathcal{Y}}$)
for fusion rules, braiding matrix, and intertwining operator basis
of $P^i$ (and $Q^i$).

\begin{theorem}\label{same fusion rule}
$N_{b,c}^a=\tilde{N}_{b,c}^a$.
\end{theorem}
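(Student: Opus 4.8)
The plan is to view $\mathcal{U}=\mathcal{U}_k$ as a mirror extension of $V^1\otimes V^2$ with $V^1=\mathcal{U}_{k-1}$ and $V^2=L(c_{k+7},0)$, and then read off the equality of fusion rules from Theorem \ref{c1}(iii). The construction of Section 3 presents $\mathcal{U}$ as an extension of $V^1\otimes V^2$ with decomposition $\mathcal{U}=\bigoplus_i U^i=\bigoplus_i P^i\otimes Q^i$, where the $P^i$ are irreducible $V^1$-modules and the $Q^i$ irreducible $V^2$-modules. Since each isotypic piece $P^i\otimes Q^i$ occurs with multiplicity one and is indexed so that $U^a=P^a\otimes Q^a$, this is exactly the decomposition $V=\bigoplus_{i,j}Z_{i,j}M^i\otimes N^j$ of Theorem \ref{c1} in which $M^i=P^i$, $N^i=Q^i$, and the bijection $\tau$ is the identity on the shared index set.

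Before applying Theorem \ref{c1} I would verify its hypotheses. By the inductive assumption $V^1=\mathcal{U}_{k-1}$ satisfies conditions (1) and (2); the unitary Virasoro minimal model $V^2$ is simple, rational, $C_2$-cofinite, of CFT type and self-dual, hence satisfies (1) and (2) as well, so by Theorem \ref{kvoa3} the tensor product $V^1\otimes V^2$ does too. Self-duality of $\mathcal{U}$ follows as in Remark \ref{self-dual}: because $\mathcal{U}_0=\mathbb{C}\1$ and $\mathcal{U}_1=0$, Theorem \ref{bilinear form} produces a unique nondegenerate invariant bilinear form and hence $\mathcal{U}\cong\mathcal{U}'$. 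Finally one must check the multiplicity-one condition $\Hom_{V^1\otimes V^2}(V^1\otimes N^j,\mathcal{U})=\mathbb{C}$ iff $N^j=V^2$ (and its mirror): this is read off the explicit decomposition, since the adjoint module $V^1=\mathcal{U}_{k-1}$ appears as a first tensor factor only in the vacuum summand $U^1=V^1\otimes V^2$ (and $V^2$ only there as a second factor), the $P^i$ and the $Q^i$ being pairwise inequivalent.

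Granting the hypotheses, Theorem \ref{c1}(iii) gives, for all labels $a,b,c$,
$$N_{b,c}^a=N_{P^b,P^c}^{P^a}=\tilde{N}_{(Q^b)',(Q^c)'}^{(Q^a)'}.$$
Because every irreducible module of a unitary Virasoro minimal model is self-dual, $(Q^i)'\cong Q^i$ for each $i$, and substituting this on the right-hand side yields $N_{b,c}^a=\tilde{N}_{b,c}^a$. The main obstacle is therefore not the final one-line deduction but the verification of the hypotheses of Theorem \ref{c1}: establishing that $\mathcal{U}_k$ is simple and self-dual, and, most delicately, that the $P^i$ (resp.\ $Q^i$) are pairwise inequivalent so that the pairing $\tau$ is a genuine bijection with the vacuum summand as its unique multiplicity-one component. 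Care is also needed to track the index shift hidden in the simplifying notation of Remark \ref{rmk}, so that the vacuum summand is correctly identified as the unique one with $P=\mathcal{U}_{k-1}$ and $Q=L(c_{k+7},0)$.
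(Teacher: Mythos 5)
Your proposal is correct and follows essentially the same route as the paper: both realize $\mathcal{U}_k$ as an extension of $\mathcal{U}_{k-1}\otimes L(c_{k+7},0)$, invoke Theorem~\ref{c1} with $M^i$ the $P$'s and $N^j$ the $Q$'s so that $\tau=\operatorname{Id}$, and use self-duality of the Virasoro minimal-model modules to remove the duals in part (iii). Your version is merely more explicit about verifying the hypotheses (conditions (1), (2), self-duality of $\mathcal{U}$, and the multiplicity-one condition), which the paper leaves implicit.
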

\begin{proof}
Recall the notations in Theorem (\ref{c1}). Put $M^i=P^{i/2}$, $N^{i'}=Q^{2i-1}$, 
$V_1=\bigoplus_i M^i$, $V_2=\bigoplus_i N^i$,
$V=\mathcal{U}=\bigoplus_i P^i\otimes Q^i$.
Since $Q^{i'}=Q^i$,
we have $\tau=\operatorname{Id}$ and hence $N_{b,c}^a=\tilde{N}_{b,c}^a$.
\end{proof}

From this theorem, $P^i$ and $Q^i$ share same fusion rules.
Fix basis $\mathcal{Y}_{b,c}^a$ and $\tilde{\mathcal{Y}}_{b,c}^a$, 
then $\mathcal{I}_{b,c}^a=\mathcal{Y}_{b,c}^a\otimes \tilde{\mathcal{Y}}_{b,c}^a$ is a basis of
$I_{U^1}\ito{U^b}{U^c}{U^a}$.
So there exist constants $\lambda_{b,c}^a$ such that
$$Y(b\otimes\tilde{b},z)=\sum_{c,a}\lambda_{b,c}^a\mathcal{I}_{b,c}^a(b\otimes\tilde{b},z)=\sum_{c,a}\lambda_{b,c}^a\mathcal{Y}_{b,c}^a(b,z)\otimes\tilde{\mathcal{Y}}_{b,c}^a(\tilde{b},z).$$
We prove $\lambda_{b,c}^a\neq 0$ for all $a,b,c$ compatible with fusion rules in the next section.

\section{$\lambda_{i,j}^k\neq 0$}
When two constant $\lambda$ and $\mu$ are either both non-zero or both zero, 
we will write the relation as $\lambda\sim \mu$.
The goal of this section is proving $\lambda_{i,j}^k\sim 1$ for all $i,j,k$ compatible with fusion rules, that is, $\lambda_{i,j}^k\neq 0$ if $N_{i,j}^k=1$.

We denote tuple $(i,j,k,l)$ for 
$$\left\langle {u^i}',Y(u^j,z_1)Y(u^k,z_2)u^l\right\rangle,$$
and we write Proposition (\ref{extension property}) as
$(i,j,k,l)\sim(i,k,j,l)$.

Note that the two $\sim$ have different meanings, 
and we can infer the correct one by context.

\subsection{Fundamental Results}
We denote $e_{i,j}$ for $\mathcal{Y}_{c,i}^a(\cdot,z_2)\mathcal{Y}_{b,d}^{i}(\cdot,z_1)\otimes \tilde{\mathcal{Y}}_{c,j}^a(\cdot,z_2)\tilde{\mathcal{Y}}_{b,d}^{j}(\cdot,z_1)$.
Decompose $Y$ to the basis $\{e_{i,j}:i,j=1,\ldots,N\}$,
by direct computation we have
$$
\begin{aligned}
&E\left\langle {(a\otimes \tilde{a})}',
Y(b\otimes \tilde{b},z_1)
Y(c\otimes \tilde{c},z_2)
d\otimes \tilde{d}\right\rangle\\
=&E\left\langle {(a\otimes \tilde{a})}',
\sum_{\beta}\lambda_{b,\beta}^a\lambda_{c,d}^{\beta}
\mathcal{Y}_{b,\beta}^a
\otimes\tilde{\mathcal{Y}}_{b,\beta}^a(b\otimes \tilde{b},z_1)
\mathcal{Y}_{c,d}^{\beta}
\otimes\tilde{\mathcal{Y}}_{c,d}^{\beta}(c\otimes \tilde{c},z_2)
d\otimes \tilde{d}\right\rangle\\
=&E\left\langle {(a\otimes \tilde{a})}',
\sum_{\beta}\lambda_{b,\beta}^a\lambda_{c,d}^{\beta}
\mathcal{Y}_{b,\beta}^a (b,z_1)
\mathcal{Y}_{c,d}^{\beta}(c,z_2)d
\otimes
\tilde{\mathcal{Y}}_{b,\beta}^{a}(\tilde{b},z_1)
\tilde{\mathcal{Y}}_{c,d}^{\beta}(\tilde{c},z_2)
\tilde{d}\right\rangle
\\
=&E\left\langle {(a\otimes \tilde{a})}',
\sum_{\beta}\lambda_{b,\beta}^a\lambda_{c,d}^{\beta}
\sum_{\gamma_1}B_{\beta,\gamma_1}
\mathcal{Y}_{c,\gamma_1}^a(c,z_2)\mathcal{Y}_{b,d}^{\gamma_1}(b,z_1)d
\otimes\sum_{\gamma_2}\tilde{B}_{\beta,\gamma_2}
\tilde{\mathcal{Y}}_{c,\gamma_2}^a(\tilde{c},z_2)\tilde{\mathcal{Y}}_{b,d}^{\gamma_2}(b,z_1)\tilde{d}
\right\rangle
\\
=&E\left\langle {(a\otimes \tilde{a})}',
\sum_{\beta,\gamma_1,\gamma_2}\lambda_{b,\beta}^a\lambda_{c,d}^{\beta}
B_{\beta,\gamma_1}\tilde{B}_{\beta,\gamma_2}
e_{\gamma_1,\gamma_2}d\otimes \tilde{d}\right\rangle
\end{aligned}
$$
and
$$
\begin{aligned}
&E\left\langle {(a\otimes \tilde{a})}',
Y(c\otimes \tilde{c},z_2)Y(b\otimes \tilde{b},z_1)d\otimes \tilde{d}\right\rangle\\
=&E\left\langle {(a\otimes \tilde{a})}',
\sum_{\mu}\lambda_{c,\mu}^a\lambda_{b,d}^{\mu} 
\mathcal{Y}_{c,\mu}^a(c,z_2)\mathcal{Y}_{b,d}^{\mu}(b,z_1)d
\otimes 
\tilde{\mathcal{Y}}_{c,\mu}^a(\tilde{c},z_2)\tilde{\mathcal{Y}}_{b,d}^{\mu}(\tilde{b},z_1)\tilde{d}\right\rangle\\
=&E\left\langle {(a\otimes \tilde{a})}',
\sum_{\mu}\lambda_{c,\mu}^a\lambda_{b,d}^{\mu} 
e_{\mu,\mu}d\otimes\tilde{d}\right\rangle,
\end{aligned}
$$
all possible $\beta,\gamma_1,\gamma_2,\mu$ compatible with fusion rules.

\begin{remark}
Note that elements in braiding matrix is denoted by $\left(B_{a_{4,}a_{1}}^{a_{3},a_{2}}\right)_{\mu,\gamma}^{i,j;k,l}$ in (\ref{braiding matrix property}).
In this paper, $i,j,k,l=1$ by fusion rules, and $a_1,a_2,a_3,a_4$ can always be inferred from the context.
So we can simplify elements in braiding matrix as $B_{\mu,\gamma}$.

Rather than focusing on elements, 
we are going to write the entire braiding matrices.
We need to determine elements by its position in the matrix, 
and we also need to determine the elements by corresponding modules.

To clarify notations, 
we use $B_{i,j}$ to determine the $(i,j)$-element in the matrix,
and $B_{U^i,U^j}$ or $B_{\mu,\gamma}$ the element whose position in the braiding matrix
is determined by (\ref{braiding matrix property}).
\end{remark}


\begin{lemma}
The relation $(a,b,c,d)\sim(a,c,b,d)$
can be written as
\begin{equation}\label{braid matrix and lambda}
B^{\operatorname{T}}\operatorname{diag}\left(\lambda_{b,\beta_1}^a\lambda_{c,d}^{\beta_1},\ldots,\lambda_{b,\beta_N}^a\lambda_{c,d}^{\beta_N}\right)\tilde{B}
=\operatorname{diag}\left(\lambda_{c,\mu_1}^a\lambda_{b,d}^{\mu_1},\ldots,\lambda_{c,\mu_N}^a\lambda_{b,d}^{\mu_N}\right). 
\end{equation}
where $B=\left(B_{i,j}\right)$,
$\tilde{B}=\left(\tilde{B}_{i,j}\right)$,
and $\mu_i, \beta_j \left(i,j=1,\ldots,N\right)$ are all modules compatible with fusion rules.
\end{lemma}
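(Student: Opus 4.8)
The plan is to translate the relation $(a,b,c,d)\sim(a,c,b,d)$, which by Proposition~\ref{extension property} is the statement that the two four-point correlation functions agree as rational functions after analytic continuation, into a matrix identity by comparing the two explicit expansions already computed just above the statement. Recall that the first expansion writes
\[
E\left\langle (a\otimes\tilde a)',Y(b\otimes\tilde b,z_1)Y(c\otimes\tilde c,z_2)d\otimes\tilde d\right\rangle
=E\left\langle (a\otimes\tilde a)',\sum_{\beta,\gamma_1,\gamma_2}\lambda_{b,\beta}^a\lambda_{c,d}^{\beta}B_{\beta,\gamma_1}\tilde B_{\beta,\gamma_2}\,e_{\gamma_1,\gamma_2}\,d\otimes\tilde d\right\rangle,
\]
while the second, obtained from the opposite ordering $Y(c,z_2)Y(b,z_1)$, reads
\[
E\left\langle (a\otimes\tilde a)',Y(c\otimes\tilde c,z_2)Y(b\otimes\tilde b,z_1)d\otimes\tilde d\right\rangle
=E\left\langle (a\otimes\tilde a)',\sum_{\mu}\lambda_{c,\mu}^a\lambda_{b,d}^{\mu}\,e_{\mu,\mu}\,d\otimes\tilde d\right\rangle.
\]
The key input is that, by the result of Huang quoted in Section~\ref{subsec:Braiding-matrix} (following \cite{H1,H2,KZ,TK}), the family of continued correlation functions $E\langle (a\otimes\tilde a)',e_{\gamma_1,\gamma_2}\,d\otimes\tilde d\rangle$ indexed by pairs $(\gamma_1,\gamma_2)$ is linearly independent, since it is the tensor product of two linearly independent families coming from the $P$-side and the $\tilde{\;}$-side separately.

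First I would invoke this linear independence to conclude that the relation $(a,b,c,d)\sim(a,c,b,d)$ forces equality of the coefficients of $e_{\gamma_1,\gamma_2}$ on both sides. On the left-hand side the coefficient of $e_{\gamma_1,\gamma_2}$ is $\sum_{\beta}\lambda_{b,\beta}^a\lambda_{c,d}^{\beta}B_{\beta,\gamma_1}\tilde B_{\beta,\gamma_2}$; on the right-hand side, since $e_{\mu,\mu}$ only ever appears with equal indices, the coefficient of $e_{\gamma_1,\gamma_2}$ is $\lambda_{c,\gamma_1}^a\lambda_{b,d}^{\gamma_1}$ when $\gamma_1=\gamma_2$ and is $0$ when $\gamma_1\neq\gamma_2$. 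Equating these for every pair $(\gamma_1,\gamma_2)$ gives exactly the entrywise statement of the asserted matrix identity
\[
\sum_{\beta}B_{\beta,\gamma_1}\,\lambda_{b,\beta}^a\lambda_{c,d}^{\beta}\,\tilde B_{\beta,\gamma_2}=\delta_{\gamma_1,\gamma_2}\,\lambda_{c,\gamma_1}^a\lambda_{b,d}^{\gamma_1}.
\]
The next step is purely bookkeeping: recognizing the left-hand side as the $(\gamma_1,\gamma_2)$-entry of the matrix product $B^{\mathrm T}\operatorname{diag}(\lambda_{b,\beta}^a\lambda_{c,d}^{\beta})\tilde B$, where the diagonal is indexed by the middle label $\beta$, and recognizing the right-hand side as the $(\gamma_1,\gamma_2)$-entry of $\operatorname{diag}(\lambda_{c,\mu}^a\lambda_{b,d}^{\mu})$. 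This is exactly equation~(\ref{braid matrix and lambda}).

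The main obstacle, and the point requiring the most care rather than the most computation, is the indexing and transpose bookkeeping: one must verify that the braiding matrix $B$ enters as $B^{\mathrm T}$ with the diagonal matrix sandwiched using the correct index $\beta$, and that the ranges of $\beta,\gamma_1,\gamma_2,\mu$ over modules compatible with fusion rules are all the same set of size $N$ (which is legitimate here precisely because $N_{b,c}^a=\tilde N_{b,c}^a$ by Theorem~\ref{same fusion rule}, so the $P$-side and $\tilde{\;}$-side braiding matrices share index sets and $e_{i,j}$ ranges over $i,j=1,\dots,N$). I would also double-check the placement convention by consulting the simplified notation $B_{\mu,\gamma}$ fixed in the Remark preceding the statement, so that the subscripts in the diagonal matrices match the summation label used in the braiding relation~(\ref{braiding matrix property}). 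Once the conventions are pinned down, the identity is immediate from the coefficient comparison, so I expect no analytic difficulty beyond the already-cited linear independence of the conformal blocks.
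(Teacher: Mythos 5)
Your proposal is correct and takes essentially the same approach as the paper's own proof: expand both orderings of the four-point function in the basis $e_{\gamma_1,\gamma_2}$, use linear independence of the analytically continued correlation functions to equate coefficients, obtaining
\[
\sum_{\beta}\lambda_{b,\beta}^a\lambda_{c,d}^{\beta}B_{\beta,\gamma_1}\tilde B_{\beta,\gamma_2}=\delta_{\gamma_1,\gamma_2}\,\lambda_{c,\gamma_1}^a\lambda_{b,d}^{\gamma_1},
\]
and then identify this entrywise statement with the matrix identity $B^{\operatorname{T}}\operatorname{diag}(\lambda_{b,\beta}^a\lambda_{c,d}^{\beta})\tilde B=\operatorname{diag}(\lambda_{c,\mu}^a\lambda_{b,d}^{\mu})$. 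Your additional remarks on why the index sets agree (via Theorem \ref{same fusion rule}) merely make explicit what the paper leaves implicit, so there is no substantive difference.
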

\begin{proof}
Let $C=\operatorname{diag}\left(\lambda_{b,\beta_1}^a\lambda_{c,d}^{\beta_1},\ldots,\lambda_{b,\beta_N}^a\lambda_{c,d}^{\beta_N}\right)$,
$D=\operatorname{diag}\left(\lambda_{c,\mu_1}^a\lambda_{b,d}^{\mu_1},\ldots,\lambda_{c,\mu_N}^a\lambda_{b,d}^{\mu_N}\right)$,
and $A=B^{\operatorname{T}}C\tilde{B}$.
Now we check the $A_{i,j}=D_{i,j}$.
By direct computation we know
$$A_{i,j}=\sum_{n,m=1}^N B_{n,i}C_{n,m}\tilde{B}_{m,j}=\sum_{k=1}^N B_{k,i}C_{k,k}\tilde{B}_{k,j}.$$ 

Consider $(a,b,c,d)\sim(a,c,b,d)$.
Use the linear independence of four point functions,
we get 
$$
\sum_{\beta,\gamma_1,\gamma_2}\lambda_{b,\beta}^a\lambda_{c,d}^{\beta}
B_{\beta,\gamma_1}\tilde{B}_{\beta,\gamma_2}
e_{\gamma_1,\gamma_2}
=\sum_{\mu}\lambda_{c,\mu}^a\lambda_{b,d}^{\mu} 
e_{\mu,\mu}.
$$
LHS of it are non-zero only if $\gamma_1=\gamma_2=\mu$.
Since $e_{i,j}$ are basis,
we have 
\begin{equation}\label{DJY equations}
	\sum_{\beta}\lambda_{b,\beta}^a\lambda_{c,d}^{\beta}B_{\beta,\mu_i}\tilde{B}_{\beta,\mu_i}
=\lambda_{c,\mu_i}^a\lambda_{b,d}^{\mu_i}
\end{equation}
for all possible $\mu_i$ compatible with fusion rules.

So we get $A_{i,j}=0=D_{i,j}$ if $i\neq j$.
And we have
$$D_{i,i}=\lambda_{c,\mu_i}^a\lambda_{b,d}^{\mu_i}
=\sum_{\beta}\lambda_{b,\beta}^a\lambda_{c,d}^{\beta}B_{\beta,\mu_i}\tilde{B}_{\beta,\mu_i}
=\sum_{k=1}^N C_{k,k}B_{k,i}\tilde{B}_{k,i}=A_{i,i},$$
thus completing the proof.
\end{proof}


	

\begin{remark}
	Instead of (\ref{DJY equations}),
	\cite{DJY} use system of equations in Theorem 3.9.
\end{remark}

%
From the previous lemma
we get
\begin{equation}\label{braid matrix and lambda sum}
\left(\lambda_{c,\mu_1}^a\lambda_{b,d}^{\mu_1},\ldots,\lambda_{c,\mu_N}^a\lambda_{b,d}^{\mu_N}\right)
=\left(\sum_{j=1}^N\lambda_{b,\beta_j}^a\lambda_{c,d}^{\beta_j}B_{\beta_j,\mu_1}\tilde{B}_{\beta_j,\mu_1},\ldots,\sum_{j=1}^N\lambda_{b,\beta_j}^a\lambda_{c,d}^{\beta_j}B_{\beta_j,\mu_N}\tilde{B}_{\beta_j,\mu_N}\right)
\end{equation}


Since we don't need the order of $\mu_i$ and $\beta_j$,
and the values of $B_{i,j}$ and $\tilde{B}_{i,j}$ are irrelevant in this section,
we write the equation (\ref{braid matrix and lambda sum}) which comes fom $(a,b,c,d)\sim(a,c,b,d)$ 
as a relation between two sets of numbers:
\begin{equation}\label{lambda equation}
\left\{\lambda_{c,\mu_1}^a\lambda_{b,d}^{\mu_1},\ldots,\lambda_{c,\mu_N}^a\lambda_{b,d}^{\mu_N}\right\}
\sim\left\{\lambda_{b,\beta_1}^a\lambda_{c,d}^{\beta_1},\ldots,\lambda_{b,\beta_N}^a\lambda_{c,d}^{\beta_N}\right\}.
\end{equation}

\begin{theorem}\label{samerank}
The none-zero elements appear same times in each side of (\ref{lambda equation}).
\end{theorem}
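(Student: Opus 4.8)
The plan is to read the claim directly off the matrix identity (\ref{braid matrix and lambda}) established in the preceding lemma, by translating ``number of nonzero elements'' into ``rank of a diagonal matrix''. Set $C=\operatorname{diag}\left(\lambda_{b,\beta_1}^a\lambda_{c,d}^{\beta_1},\ldots,\lambda_{b,\beta_N}^a\lambda_{c,d}^{\beta_N}\right)$ and $D=\operatorname{diag}\left(\lambda_{c,\mu_1}^a\lambda_{b,d}^{\mu_1},\ldots,\lambda_{c,\mu_N}^a\lambda_{b,d}^{\mu_N}\right)$, exactly as in the proof of that lemma. The diagonal entries of $C$ and $D$ are by definition the two sides of (\ref{lambda equation}), and for a diagonal matrix the rank equals the number of nonzero diagonal entries. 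Hence the assertion of the theorem is equivalent to the single equality $\operatorname{rank}(C)=\operatorname{rank}(D)$.

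By (\ref{braid matrix and lambda}) we have $D=B^{\operatorname{T}}C\tilde{B}$. Therefore the whole argument reduces to the claim that the braiding matrices $B$ and $\tilde{B}$ are invertible: once this is known, $B^{\operatorname{T}}$ and $\tilde{B}$ are invertible as well, and since multiplication by an invertible matrix on either side preserves rank, we obtain $\operatorname{rank}(D)=\operatorname{rank}\left(B^{\operatorname{T}}C\tilde{B}\right)=\operatorname{rank}(C)$, which is precisely what we want. I expect the invertibility of $B$ and $\tilde{B}$ to be the only nontrivial input, and thus the main obstacle to make rigorous.

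To settle that point I would appeal to the construction of the braiding matrix recalled in Section \ref{subsec:Braiding-matrix}. The relation (\ref{braiding matrix property}) expresses each four-point function built from the channel $\mathcal{Y}_{a_3,\mu}^{a_4}(\cdot,z_1)\mathcal{Y}_{a_2,a_1}^{\mu}(\cdot,z_2)$ as a linear combination of the functions built from the other channel $\mathcal{Y}_{a_2,\gamma}^{a_4}(\cdot,z_2)\mathcal{Y}_{a_3,a_1}^{\gamma}(\cdot,z_1)$. By the linear independence statement of \cite{H2} quoted there, each of these two families is a basis of one and the same space of (analytically continued) four-point functions on $\tilde{R}$; in particular both families have the common cardinality $N$, which is why $B$ and $\tilde{B}$ are genuine square matrices. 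Being the transition matrix between two bases of a single vector space, $B$ is invertible, and the identical argument applies to $\tilde{B}$. Combining this with the previous paragraph yields $\operatorname{rank}(C)=\operatorname{rank}(D)$, so the nonzero products occur the same number of times on the two sides of (\ref{lambda equation}), completing the proof.
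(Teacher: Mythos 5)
Your proposal is correct and takes essentially the same approach as the paper: the paper's own proof is the one-line observation that $B$ and $\tilde{B}$ are invertible, so the two sides of (\ref{braid matrix and lambda}), being diagonal matrices whose ranks count their nonzero entries, must have equal rank. The only difference is that you spell out why $B$ and $\tilde{B}$ are invertible (they are transition matrices between two bases of the same space of four-point functions), a fact the paper takes as known from Section \ref{subsec:Braiding-matrix}.
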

\begin{proof}
Noting that $B,\tilde{B}$ are invertible and two sides of (\ref{braid matrix and lambda}) share the same rank, the proof is immediate.
\end{proof}

\begin{remark}
Note that if there is only one element in (\ref{lambda equation}), this relation will be compatible with the $\sim$ between numbers.
In this case we will omit the braces.
\end{remark}

In order to get the uniqueness of $\mathcal{U}_k$,
we need to know some elements in $B$ and $\tilde{B}$.
We will show in the next section that $B$ can by computed by $\tilde{B}$,
and non-zero elements $\tilde{B}_{i,j}$ can be checked cace by case for any fixed $\mathcal{U}_k$.

Now we are ready to start proving $\lambda_{i,j}^k\sim 1$ for all $i,j,k$ compatible with fusion rules.

\begin{lemma}
$\lambda_{1,k}^{k}\neq 0$,
$\lambda_{k,1}^{k}\neq 0$ 
,and $\lambda_{k,k}^{1}\neq 0$.
\end{lemma}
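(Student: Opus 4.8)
The plan is to treat the three constants separately: $\lambda_{1,k}^{k}$ and $\lambda_{k,1}^{k}$ will follow immediately from the vacuum and creation axioms for the vertex operator $Y$ on $\mathcal{U}$, while $\lambda_{k,k}^{1}$ will require the nondegeneracy of the invariant bilinear form together with the self-duality of the module $U^{k}$.

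For $\lambda_{1,k}^{k}$, I would use that the vacuum $\mathbf{1}=\mathbf{1}_{P^{1}}\otimes\mathbf{1}_{Q^{1}}$ lies in $U^{1}$, and that, $U^{1}$ being the vacuum component of the subalgebra $P^{1}\otimes Q^{1}$, the fusion rules give $N_{1,k}^{a}=\delta_{a,k}$. Hence for $u^{k}\in U^{k}$ the decomposition of $Y$ collapses to $Y(\mathbf{1},z)u^{k}=\lambda_{1,k}^{k}\,\mathcal{Y}_{1,k}^{k}(\mathbf{1}_{P^{1}},z)\otimes\tilde{\mathcal{Y}}_{1,k}^{k}(\mathbf{1}_{Q^{1}},z)\,u^{k}$, a fixed nonzero operator times $\lambda_{1,k}^{k}$; since $Y(\mathbf{1},z)=\mathrm{id}$ the left side equals $u^{k}\neq 0$, forcing $\lambda_{1,k}^{k}\neq 0$. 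For $\lambda_{k,1}^{k}$, I would instead apply $Y(u^{k},z)$ to $\mathbf{1}\in U^{1}$; the fusion rules $N_{k,1}^{a}=\delta_{a,k}$ again leave only $\lambda_{k,1}^{k}\,\mathcal{Y}_{k,1}^{k}(u^{k},z)\otimes\tilde{\mathcal{Y}}_{k,1}^{k}(\tilde{u}^{k},z)\mathbf{1}$, whereas the creation axiom $Y(u^{k},z)\mathbf{1}=e^{zL(-1)}u^{k}$ shows the left side is nonzero (its constant term in $z$ is $u^{k}$). Hence $\lambda_{k,1}^{k}\neq 0$.

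The real content is $\lambda_{k,k}^{1}\neq 0$. First I would establish that $U^{k}$ is self-dual, i.e. $(U^{k})'\cong U^{k}$: since $Q^{k}$ is an irreducible Virasoro module it is self-dual, so $\tilde{N}_{k,k}^{1}=1$, and Theorem \ref{same fusion rule} gives $N_{k,k}^{1}=\tilde{N}_{k,k}^{1}=1$; as $U^{1}$ is the vacuum module of $P^{1}\otimes Q^{1}$, the relation $N_{k,k}^{1}=1$ is precisely the statement $(U^{k})'\cong U^{k}$. Next I would invoke the invariant bilinear form on $\mathcal{U}$, which is nondegenerate because $\mathcal{U}$ is self-dual, as in Remark \ref{self-dual} via Theorem \ref{bilinear form}. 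This form pairs $U^{i}$ with $(U^{i})'$ nondegenerately, so by the previous step it restricts to a nondegenerate pairing on $U^{k}\times U^{k}$. For homogeneous $u,v\in U^{k}$ the number $\langle u,v\rangle$ is read off from the vacuum-sector component of $Y(u,z)v$, and that component is governed exactly by the term carrying $\lambda_{k,k}^{1}$. Thus if $\lambda_{k,k}^{1}=0$ then $Y(u,z)v$ has no component in $U^{1}$ for all $u,v\in U^{k}$, so $\langle u,v\rangle=0$ on all of $U^{k}\times U^{k}$, contradicting nondegeneracy; therefore $\lambda_{k,k}^{1}\neq 0$.

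I expect this last step to be the main obstacle. The first two constants are forced directly by the defining axioms, but pinning down $\lambda_{k,k}^{1}$ requires both identifying the dual module $(U^{k})'$, where Theorem \ref{same fusion rule} and self-duality of the Virasoro factor $Q^{k}$ are essential, and carefully matching the vacuum-sector component of $Y(u,z)v$ with the invariant form so that nondegeneracy of the form translates into $\lambda_{k,k}^{1}\neq 0$.
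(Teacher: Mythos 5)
Your proposal is correct and follows essentially the same route as the paper: the first constant is forced by the $U^1$-module structure on $U^k$ (your vacuum-axiom argument), the second by skew-symmetry specialized at the vacuum (your creation-axiom argument is exactly the paper's $Y(u^k,z)u^1=e^{zL(-1)}Y(u^1,-z)u^k$ with $u^1=\mathbf{1}$), and the third by the invariant bilinear form, where your ``read $\langle u,v\rangle$ off the vacuum-sector component of $Y(u,z)v$'' is precisely the paper's adjoint identity $\langle Y(u^k,z)v^k,u^1\rangle=\langle v^k,Y(e^{zL(1)}(-z^{-2})^{L(0)}u^k,z^{-1})u^1\rangle$ combined with $\lambda_{k,1}^k\neq 0$. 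The only difference is presentational: you make explicit the self-duality $(U^k)'\cong U^k$ and the nondegeneracy of the restricted pairing, which the paper uses implicitly (via Remark \ref{self-dual}) rather than stating.
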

\begin{proof}
The first claim is a direct consequence from the definition that $U^k$ is an irreducible $U^1$-module.

For any $u^{k}\in U^k$, using skew symmetry of $Y\left(\cdot,z\right)$,
we have
\[
Y(u^{k},z)u^{1}=e^{zL\left(-1\right)}Y\left(u^{1},-z\right)u^{k}=\lambda_{1,k}^{k}\cdot e^{zL\left(-1\right)}\mathcal{I}_{1,k}^{k}\left(u^{1},-z\right)u^{k}=\lambda_{k,1}^{k}\cdot\mathcal{I}_{k,1}^{k}\left(u^{k},z\right)u^{1}.
\]
So $\lambda_{k,1}^{k}\not=0$.

For $u^{k},v^{k}\in U^k$, we have
\[
\left\langle Y\left(u^{k},z)v^{k}\right),u^{1}\right\rangle =\left\langle v^{k},Y\left(e^{zL\left(-1\right)}\left(-z^{-2}\right)^{L\left(0\right)}u^{k},z^{-1}\right)u^{1}\right\rangle .
\]
That is,
\[
\left\langle \lambda_{k,k}^{1}\cdot\mathcal{I}_{k,k}^{1}\left(u^{k},z\right)v^{k},u^{1}\right\rangle =\left\langle v^{k},\lambda_{k,1}^{k}\cdot\mathcal{I}_{k,1}^{k}\left(e^{zL\left(-1\right)}\left(-z^{-2}\right)^{L\left(0\right)}u^{k},z^{-1}\right)u^{1}\right\rangle .
\]
Applying previous claim, $\lambda_{k,1}^{k}\not=0$, and hence $\lambda_{k,k}^{1}\not=0$.
\end{proof}

\begin{theorem}\label{alternation}
$\lambda_{i,j}^k\sim\lambda_{j,i}^k\sim\lambda_{j,k}^i$ for all $i,j,k$ compatible with fusion rules.
\end{theorem}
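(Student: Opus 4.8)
The plan is to obtain both links in the chain from the two structural symmetries of a genuine vertex operator $Y$ — precisely the two identities already used in the preceding lemma to pin down $\lambda_{k,1}^{k}$ and $\lambda_{k,k}^{1}$: \emph{skew-symmetry} of $Y$ will give $\lambda_{i,j}^k\sim\lambda_{j,i}^k$, and \emph{invariance of the bilinear form} will give $\lambda_{j,i}^k\sim\lambda_{j,k}^i$. Since $\sim$ is an equivalence relation, transitivity then yields the full statement.

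First I would treat $\lambda_{i,j}^k\sim\lambda_{j,i}^k$. Take $u\in U^i$, $v\in U^j$ and apply skew-symmetry $Y(u,z)v=e^{zL(-1)}Y(v,-z)u$. Projecting both sides onto the summand $U^k$ and inserting $Y(u,z)v=\sum_a\lambda_{i,j}^a\mathcal{I}_{i,j}^a(u,z)v$, the left side contributes $\lambda_{i,j}^k\mathcal{I}_{i,j}^k(u,z)v$ and the right side contributes $\lambda_{j,i}^k\,e^{zL(-1)}\mathcal{I}_{j,i}^k(v,-z)u$. The skew-symmetry assignment $\mathcal{Y}\mapsto\mathcal{Y}^{\mathrm{op}}$, with $\mathcal{Y}^{\mathrm{op}}(u,z)v:=e^{zL(-1)}\mathcal{Y}(v,-z)u$, is an isomorphism $I_{j,i}^k\to I_{i,j}^k$; because $N_{i,j}^k=N_{j,i}^k\in\{0,1\}$ these spaces are at most one-dimensional, so $(\mathcal{I}_{j,i}^k)^{\mathrm{op}}$ is a \emph{nonzero} multiple of the basis operator $\mathcal{I}_{i,j}^k$. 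Hence $\lambda_{i,j}^k$ and $\lambda_{j,i}^k$ differ by a nonzero scalar, i.e. $\lambda_{i,j}^k\sim\lambda_{j,i}^k$.

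Next I would treat $\lambda_{j,i}^k\sim\lambda_{j,k}^i$ via the invariant bilinear form $\langle\cdot,\cdot\rangle$ on $\mathcal{U}$ from Remark~\ref{self-dual}. Choosing $a\in U^j$, $b\in U^k$, $c\in U^i$ and substituting the expansion of $Y$ into
\[
\langle Y(a,z)b,c\rangle=\langle b,Y(e^{zL(1)}(-z^{-2})^{L(0)}a,z^{-1})c\rangle,
\]
the left side isolates $\lambda_{j,k}^i$ (the output component landing in $U^i$), while the right side isolates $\lambda_{j,i}^k$ (the output component landing in $(U^k)'=U^k$). As before, the adjoint assignment is an isomorphism $I_{j,k}^i\to I_{j,i}^k$ between one-dimensional spaces, so it carries the basis operator $\mathcal{I}_{j,k}^i$ to a nonzero multiple of $\mathcal{I}_{j,i}^k$; together with nondegeneracy of the form restricted to $U^i\times U^i$ and to $U^k\times U^k$, this forces $\lambda_{j,k}^i$ and $\lambda_{j,i}^k$ to vanish or not vanish together.

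The routine part (projecting onto the correct summand and tracking the $L(0)$- and $L(1)$-shifts, which keep $a$ inside $U^j$) is mechanical. The one point that genuinely needs care, and which I regard as the main obstacle, is the index matching in the bilinear-form step: the adjoint symmetry naturally yields $N_{j,i}^k=N_{j,k'}^{i'}$, so the clean form $\lambda_{j,i}^k\sim\lambda_{j,k}^i$ requires $i'=i$ and $k'=k$. I would therefore first confirm self-duality of every $U^i$ — plausible since each $Q^i$ is a self-dual Virasoro minimal-model module and each $P^i$ should be self-dual, consistent with $\mathcal{U}\cong\mathcal{U}'$ — and only then read off the fusion-rule symmetry $N_{i,j}^k=N_{j,i}^k=N_{j,k}^i$ that makes the ``compatible with fusion rules'' hypothesis uniform across all three coefficients.
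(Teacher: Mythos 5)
Your proposal is correct, but it takes a genuinely different route from the paper. The paper's proof stays inside the four-point-function machinery of Section 4: it considers the relation $(1,i,j,k)\sim(1,j,i,k)$, notes that only one monomial survives on each side because $N_{a,b}^1\neq 0$ iff $a=b$, obtains $\lambda_{j,k}^i\lambda_{i,i}^1\sim\lambda_{i,k}^j\lambda_{j,j}^1$, and concludes $\lambda_{j,k}^i\sim\lambda_{i,k}^j$ from the preceding lemma's nonvanishing of $\lambda_{i,i}^1,\lambda_{j,j}^1$. You instead generalize the mechanism of that preceding lemma itself: skew-symmetry of $Y$ yields the swap of the two lower indices, and the FHL adjoint coming from the invariant bilinear form yields the swap of the upper index with a lower one. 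Comparing the two: the paper's argument reuses machinery already in place and needs no new structural facts beyond the earlier lemma, but as written it only produces the single transposition $\lambda_{b,c}^a\sim\lambda_{a,c}^b$ (upper index with first lower index), whose orbit does not contain the link $\lambda_{i,j}^k\sim\lambda_{j,i}^k$ of the stated chain --- that link implicitly requires exactly the skew-symmetry step (or the analogous relation $(i,j,k,1)\sim(i,k,j,1)$) which you make explicit. Your version establishes both links of the chain directly, does not need the braiding formalism, and does not even use the nonvanishing of $\lambda_{a,a}^1$. The price is the self-duality $(U^i)'\cong U^i$ of every summand, which you correctly flag as the delicate point; it does hold, and for essentially the reason you sketch: each $Q^i$ is a self-dual minimal-model module and the $Q^i$ are pairwise inequivalent, so $\mathcal{U}\cong\mathcal{U}'$ together with uniqueness of the decomposition into irreducible $P^i\otimes Q^i$ forces $(P^i)'\cong P^i$, hence $(U^i)'\cong U^i$. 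Note that the paper relies on the same fact implicitly, since its claim ``$N_{a,b}^1\neq 0$ if and only if $a=b$'' is precisely self-duality of the summands.
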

\begin{proof}
Consider $(1,i,j,k)\sim (1,j,i,k)$.
Note that there will be only one monomial in each side of the equation
since $N_{a,b}^1\neq 0$ if and only if $a=b$.
We have
$\lambda_{j,k}^i\lambda_{i,i}^1=\lambda_{i,k}^j\lambda_{j,j}^1.$
So $\lambda_{j,k}^i\sim \lambda_{i,k}^j$ by previous claim.
\end{proof}

Transpositions of indices of $\lambda_{a,b}^c$ will be omitted from now on.



\begin{lemma}\label{3,i,i-2}
$\lambda_{i,i-2}^3\sim 1$ for all $i$ compatible with fusion rules.
\end{lemma}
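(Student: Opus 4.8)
The plan is to argue by induction on $i$, using only the multiset relation (\ref{lambda equation}) coming from commutativity, the rank/counting principle of Theorem \ref{samerank}, the coefficients $\lambda_{1,k}^k,\lambda_{k,1}^k,\lambda_{k,k}^1$ already shown to be nonzero, and the index symmetry of Theorem \ref{alternation}. By that symmetry the value $\lambda_{i,i-2}^3$ depends (up to $\sim$) only on the unordered triple $\{3,i,i-2\}$, so I am free to read it in whichever of the equivalent forms $\lambda_{3,i-2}^i$ or $\lambda_{3,i}^{i-2}$ is convenient. The base case is $i=3$, where $i-2=1$ and $\lambda_{3,1}^3=\lambda_{k,1}^k$ with $k=3$ is nonzero by the preceding lemma.

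For the inductive step I would feed the quadruple $(a,b,c,d)=(3,3,i-2,i-2)$ into (\ref{lambda equation}); here $U^3$ plays the role of a ladder module shifting the second index by $\pm 2$. Expanding, the left-hand multiset is indexed by $\mu\in\{i-4,i-2,i\}$ and equals $\{(\lambda_{i-2,i-4}^3)^2,(\lambda_{i-2,i-2}^3)^2,(\lambda_{i,i-2}^3)^2\}$, while the right-hand multiset is indexed by $\beta\in\{1,3,5\}$ and contains the product $\lambda_{3,1}^3\lambda_{i-2,i-2}^1$ of two base coefficients (the channel $\beta=1$), which is nonzero, together with $\lambda_{3,3}^3\lambda_{i-2,i-2}^3$ and $\lambda_{3,5}^3\lambda_{i-2,i-2}^5$. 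Since squaring does not affect vanishing, Theorem \ref{samerank} says the two sides carry the same number of nonzero entries. The inductive hypothesis gives $\lambda_{i-2,i-4}^3\neq 0$, so the left side already has one nonzero entry; the counting then pins down the target once the remaining entries are understood, and one checks that the only way the two counts can balance is $\lambda_{i,i-2}^3\neq 0$.

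The hard part is that these relations are genuinely multi-term: any choice of $(a,b,c,d)$ avoiding the vacuum module $U^1$ (which merely reproduces the symmetry of Theorem \ref{alternation}) introduces auxiliary coefficients — here the near-diagonal $\lambda_{i-2,i-2}^3$, the anchor $\lambda_{3,3}^3$, and the next-ladder $\lambda_{i-2,i-2}^5$ — whose non-vanishing is not yet available. I therefore expect the clean statement to come out of a simultaneous induction that carries the diagonal family $\lambda_{k,k}^3$ (together with $\lambda_{3,3}^3$ and $\lambda_{k,k}^5$) along with $\lambda_{i,i-2}^3$, so that every entry in the multiset equation above is either a base coefficient or already known at stage $i-2$; the counting then forces the single remaining unknown to be nonzero. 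One must also treat separately the small values of $i$ and the values near the top index, where the ranges of $\mu$ and $\beta$ truncate, the multisets shrink, and the counting is correspondingly more rigid.
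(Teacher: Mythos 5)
Your setup is computed correctly (the multiset coming from $(3,3,i-2,i-2)$ and the base case $\lambda_{3,1}^3\neq 0$ are both right), but the inductive step has a genuine gap, one that your own final paragraph concedes. The assertion that ``the only way the two counts can balance is $\lambda_{i,i-2}^3\neq 0$'' is false: Theorem \ref{samerank} only equates the \emph{number} of nonzero entries on the two sides, and since the status of $\lambda_{3,3}^3\lambda_{i-2,i-2}^3$ and $\lambda_{3,5}^3\lambda_{i-2,i-2}^5$ is unknown at this stage, the count is perfectly compatible with $\lambda_{i,i-2}^3=0$ (for instance: target zero, $\lambda_{i-2,i-2}^3\neq 0$, $\lambda_{3,3}^3\neq 0$, $\lambda_{3,5}^3\lambda_{i-2,i-2}^5=0$ gives two nonzero entries on each side). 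Moreover, the repair you propose --- a simultaneous induction carrying the diagonal family $\lambda_{k,k}^3$, $\lambda_{3,3}^3$ and $\lambda_{k,k}^5$ --- is circular relative to the paper's logical order: the nonvanishing of $\lambda_{i,i}^3$ and $\lambda_{i,i}^5$ is precisely the content of Sections 4.2 and 4.3, those arguments take the present lemma as input, and in the even-$m$ case even $\lambda_{3,3}^3\sim\lambda_{5,5}^3\sim 1$ requires a separate nontrivial argument; none of this is obtainable by local counting here.

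The idea you are missing is that the paper does not force nonvanishing step by step at all; it argues by contradiction \emph{globally}. Assume $\lambda_{3,i-2}^i\sim 1$ for $i=3,5,\dots,2t+1$ but $\lambda_{3,2t+1}^{2t+3}=0$. Then, using relations of the form $(3,p,q-2,r)\sim(3,q-2,p,r)$ and $(3,p,q,r)\sim(3,q,p,r)$ in which most channels are killed by fusion rules --- so the multiterm relations collapse to single-term ones such as $\lambda_{r,q-2}^{p+2}\sim\lambda_{r,p}^{q}$ --- the one assumed zero propagates to $\lambda_{p,q}^{r}=0$ for all odd $p,q\le 2t+1$ and all $r\ge 2t+3$. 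Consequently $V=U^1\oplus U^3\oplus\cdots\oplus U^{2t+1}$ is closed under $Y$, hence a subVOA of $\mathcal{U}$, and this contradicts Theorem \ref{c1}(ii): the set of modules appearing in the simple extension $V$ must be closed under fusion, yet $U^{2n+1}\boxtimes U^{2n+1}$ contains $U^{2t+3}$ for suitable $n$. This subVOA/fusion-closure obstruction from mirror extension theory, not any counting of nonzero entries, is what carries the proof, and it is entirely absent from your proposal.
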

\begin{proof}
We already know $\lambda_{3,1}^3\sim 1$. 
Assume $\lambda_{3,i-2}^i\sim 1$ for $i=3, 5,7,\dots,2t+1$,
but $\lambda_{3,2t+1}^{2t+3}=0$.

If we can prove that $\lambda_{p,q}^{r}=0$ for all 
$1\leq \{p,q\}\leq 2t+1,r\geq 2t+3$ when $N_{p,q}^r=1$, 
then $V=\sum_{n=1}^{t}U^{2n+1}$ will be a subVOA of $\mathcal{U}$.
It has a contradiction between Theorem (\ref{c1})(ii) and the fusion rule of $U^{2n+1}\boxtimes U^{2n+1}$ for some $n$ with $U^{2t+3}$ being a summand, 
which makes our assumption impossible.

To get the contradiction we need, 
we will firstly prove that $\lambda_{p,q}^{r}=0$ for all $r=2t+3$ and $p,q$ such that $p+q=2t+4$.
Then we will prove that $\lambda_{p,q}^r=0$ for all $r\geq 2t+3,p\leq 2t-1,q\leq 2t+1$.
Thus we can get the contradiction.

For $r=2t+3$ and $p+q=2t+4$,
we consider $(3,p,q-2,r)\sim (3,q-2,p,r)$.
And we get
$$
\left\{
\lambda_{3,p}^{p-2}\lambda_{r,q-2}^{p-2},\quad
\lambda_{3,p}^{p}\lambda_{r,q-2}^{p},\quad
\lambda_{3,p}^{p+2}\lambda_{r,q-2}^{p+2}
\right\}
\sim
\left\{
\lambda_{3,q-2}^{q-4}\lambda_{r,p}^{q-4},\quad
\lambda_{3,q-2}^{q-2}\lambda_{r,p}^{q-2},\quad
\lambda_{3,q-2}^{q}\lambda_{r,p}^{q}\right\}.
$$
Note that by fusion rules we have
$N_{r,q-2}^{p-2}=N_{r,q-2}^{p}=N_{r,p}^{q-4}=N_{r,p}^{q-2}=0$,
and that our assumption asserts $\lambda_{3,i-2}^{i}\sim 1$ for odd $i$ such that $3\leq i \leq 2t+1$.
As a result, the equation becomes
$\lambda_{r,q-2}^{p+2}
\sim\lambda_{r,p}^{q}$ for all odd $p\leq 2t+1$.
So 
$$\lambda_{r,p}^{q}
\sim\lambda_{r,p-2}^{q+2}\sim\cdots\sim \lambda_{r,3}^{2t+1}=0,$$
and the first step is finished.




Now we consider the case when $p\leq 2t-1,q\leq 2t+1,r\geq 2t+3$.
Pick $p,q$ such that $N_{p',q'}^r=0$ for every possible $p'\leq p, q'\leq q$ and the sum $p+q$ is the biggest.

Consider $(3,p,q,r)\sim(3,q,p,r)$,
we have
$$\left\{
	\lambda_{3,p}^{p-2}\lambda_{q,r}^{p-2},\quad
\lambda_{3,p}^{p}\lambda_{q,r}^{p},\quad
\lambda_{3,p}^{p+2}\lambda_{q,r}^{p+2}\right\}
\sim
\left\{
\lambda_{3,q}^{q-2}\lambda_{p,r}^{q-2},\quad
\lambda_{3,q}^{q}\lambda_{p,r}^{q},\quad
\lambda_{3,q}^{q+2}\lambda_{p,r}^{q+2}\right\}.$$
It also holds that $\lambda_{p-2,q}^{r}=\lambda_{p,q}^{r}=\lambda_{p,q-2}^{r}=0$ by assumption.
Since $p\leq 2t-1$, $\lambda_{3,p}^{p+2}\sim 1$,
we get $\lambda_{q,r}^{p+2}\sim\lambda_{3,q}^{q+2}\lambda_{p,r}^{q+2}$.

If $q=2t+1$, then by assumption $\lambda_{q,r}^{p+2}\sim\lambda_{q,3}^{q+2}=0$.
So we proved $\lambda_{p,2t+1}^r=0$ for any $1\leq p\leq 2t+1$.
If $q=2t-1$, then $\lambda_{q,r}^{p+2}\sim\lambda_{p,r}^{q+2}=\lambda_{p,r}^{2t+1}=0$.
We get $\lambda_{p,q}^r=0$ for any $p,q\leq 2t+1$ by induction on $q$.
Thus the second step is finished and we get the contradiction we need.
\end{proof}

\begin{lemma}{\label{recursion}}
Assume for any odd $i$, $\lambda_{i,i}^3\sim 1$.
For any odd $p$, if $\lambda_{i,j}^k\sim 1$ holds as long as
$\min\left\{i,j,k\right\}\leq p$
with $i,j,k$ compatible with fusion rules,
then $\lambda_{i,j}^{p+2}\sim 1$.
\end{lemma}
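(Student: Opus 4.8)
The plan is to prove $\lambda_{c,d}^{p+2}\sim 1$ for every pair $c,d$ compatible with the fusion rules; by Theorem (\ref{alternation}) this is the same as establishing $\lambda\sim 1$ on every admissible triple containing the label $p+2$, and a triple whose other two labels are not both $\geq p+2$ has minimum label $\leq p$ and is already covered by the hypothesis. So the real content is the triples $\{p+2,c,d\}$ with $c,d\geq p+2$. The one input I extract from the hypotheses first is that $\lambda_{3,m}^n\sim 1$ for \emph{all} $m,n$ compatible with the fusion rules: the three possibilities $n\in\{m-2,m,m+2\}$ are, after applying Theorem (\ref{alternation}), exactly $\lambda_{m,m-2}^3$, $\lambda_{m,m}^3$ and $\lambda_{m+2,m}^3$, which are $\sim 1$ by Lemma (\ref{3,i,i-2}) and the standing assumption.

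The engine is (\ref{lambda equation}) specialised to $b=3$. For any $a,c,d$ the factor carrying the label $3$ is of the form $\lambda_{3,\ast}^{\ast}\sim 1$, while $N_{3,d}^\mu\neq 0$ (resp.\ $N_{3,\beta}^a\neq 0$) forces $\mu\in\{d-2,d,d+2\}$ (resp.\ $\beta\in\{a-2,a,a+2\}$). Hence the two multisets in (\ref{lambda equation}) reduce, up to $\sim$, to $\{\lambda_{c,d-2}^a,\lambda_{c,d}^a,\lambda_{c,d+2}^a\}$ and $\{\lambda_{c,d}^{a-2},\lambda_{c,d}^a,\lambda_{c,d}^{a+2}\}$, a term being present only when its triple is admissible. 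The common entry $\lambda_{c,d}^a$ appears on both sides precisely when $N_{c,d}^a\neq 0$, so Theorem (\ref{samerank}) gives
\[
\mathbb 1[\lambda_{c,d-2}^a\neq 0]+\mathbb 1[\lambda_{c,d+2}^a\neq 0]=\mathbb 1[\lambda_{c,d}^{a-2}\neq 0]+\mathbb 1[\lambda_{c,d}^{a+2}\neq 0].
\]

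Now take $a=p$. Every term other than $\lambda_{c,d}^{p+2}$ has a label $\leq p$ and is therefore $\sim 1$ whenever admissible, so writing $\mathbb 1[\{x,y,z\}]$ for the admissibility indicator the identity rearranges to
\[
\mathbb 1[\lambda_{c,d}^{p+2}\neq 0]=\mathbb 1[\{c,d-2,p\}]+\mathbb 1[\{c,d+2,p\}]-\mathbb 1[\{c,d,p-2\}].
\]
Assume $\{c,d,p+2\}$ admissible and (by Theorem (\ref{alternation})) $c\leq d$; admissibility forces $p+2<c+d$ and $d-c\leq p+1$. Provided $d$ is not the top label, the sum $c+(d+2)+p$ coincides with that of $\{c,d,p+2\}$ so the level and index bounds are automatic, and a short check of triangle inequalities gives $\mathbb 1[\{c,d-2,p\}]=1$ and $\mathbb 1[\{c,d+2,p\}]=\mathbb 1[\{c,d,p-2\}]=\mathbb 1[d-c\leq p-3]$. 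The right-hand side collapses to $1$, so $\lambda_{c,d}^{p+2}\neq 0$. This disposes of every new triple whose two non-$(p+2)$ labels both lie below the top label $\ell$.

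The surviving case, $\max\{c,d\}=\ell$, is the main obstacle: there the level truncation of the Virasoro fusion rules deletes $\{c,d+2,p\}$ while keeping $\{c,d,p-2\}$, the cancellation fails, and the relation above would wrongly read off $\lambda=0$. To treat it I would instead run the engine (\ref{lambda equation}) with $b=\ell$. The key point is that $U^\ell$ is a \emph{self-dual simple current} of $\mathcal U$: its quantum dimension is the product $q\dim P^\ell\cdot q\dim Q^\ell$ of the quantum dimension of the corner (simple current) Virasoro module $Q^\ell$ and of $P^\ell$, a simple current of $\mathcal U_{k-1}$ by the inductive hypothesis, whence $q\dim U^\ell=1$ by Proposition (\ref{quantum dimension and simple current}) and Remark (\ref{product property of global dimension}). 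Because fusion with a self-dual simple current is a bijection $\sigma$ with $\sigma(\ell)=1$, both multisets in (\ref{lambda equation}) collapse to single terms; taking $a=\ell$ and then $d=c$ yields
\[
\lambda_{c,\sigma(c)}^{\ell}\,\lambda_{\ell,c}^{\sigma(c)}\sim\lambda_{\ell,1}^{\ell}\,\lambda_{c,c}^{1},
\]
whose right-hand side is nonzero by the already-proved relations $\lambda_{\ell,1}^\ell\neq 0$ and $\lambda_{c,c}^1\neq 0$. Hence $\lambda_{\ell,c}^{\sigma(c)}\neq 0$ for every $c$, and since $\{\ell,c,\sigma(c)\}$ runs over exactly the admissible triples containing $\ell$, this settles the boundary (in particular the target $\lambda_{\ell,c}^{p+2}$ with $\sigma(c)=p+2$). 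The genuinely hard part of the step is the verification that $U^\ell$ is a self-dual simple current; everything else is the bookkeeping above.
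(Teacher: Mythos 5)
Your ``engine'' is exactly the paper's proof, and it already finishes the lemma with no boundary case. The paper applies the braiding relation to $(3,p,i,j)\sim(3,i,p,j)$: every entry of the right-hand multiset is a product $\lambda_{3,i}^{i-2}\lambda_{p,j}^{i-2}$, $\lambda_{3,i}^{i}\lambda_{p,j}^{i}$ or $\lambda_{3,i}^{i+2}\lambda_{p,j}^{i+2}$, hence nonzero whenever admissible (Lemma \ref{3,i,i-2}, the standing assumption $\lambda_{i,i}^{3}\sim 1$, and the induction hypothesis, since each second factor carries the index $p$); Theorem \ref{samerank} then forces every admissible left-hand entry, in particular $\lambda_{i,j}^{p+2}$, to be nonzero. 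The point you missed is that this counting argument is insensitive to the level truncation: the two multisets in (\ref{lambda equation}) always contain the \emph{same} number of admissible entries, because they index the rows and columns of the square invertible braiding matrix in (\ref{braid matrix and lambda}) --- equivalently, by associativity of the fusion ring, $\sum_{\beta}N_{3,\beta}^{a}N_{c,d}^{\beta}=\sum_{\mu}N_{c,\mu}^{a}N_{3,d}^{\mu}$. So ``all admissible right entries nonzero'' implies ``all admissible left entries nonzero'' no matter which individual triples get pruned, and your indicator identity automatically evaluates to the admissibility indicator of $\{c,d,p+2\}$. Concretely, the configuration you fear at $d=\ell$ (the top label) --- $\{c,d+2,p\}$ deleted while $\{c,d,p-2\}$ kept --- never occurs: by Proposition \ref{fusion rules of virasoro modules}, the admissible triples $\{c,\ell,x\}$ confine $x$ to at most two consecutive odd values ($x=\ell-c+1$ when the top module is a simple current, $x\in\{\ell-c+1,\ell-c+3\}$ otherwise), so $x=p+2$ and $x=p-2$, which differ by four, cannot both be admissible.

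This matters because the substitute argument you give for that (nonexistent) boundary case is genuinely wrong, not merely superfluous, so your proof as written has a gap in the very case you flag as ``the genuinely hard part''. When $m$ is odd, the top module $U^{\ell}$ ($\ell=m$) is \emph{not} a simple current: the paper itself records $U^{m}\boxtimes U^{m}=U^{1}+U^{3}$ at the start of the odd-$m$ subsection (e.g.\ $U^{5}\boxtimes U^{5}=U^{1}+U^{3}$ for $\mathcal{U}_{1}$). Consequently the multisets with $b=\ell$ do not collapse to single terms, and the derivation of your displayed relation $\lambda_{c,\sigma(c)}^{\ell}\,\lambda_{\ell,c}^{\sigma(c)}\sim\lambda_{\ell,1}^{\ell}\,\lambda_{c,c}^{1}$ breaks down precisely where you need it. The supporting argument is also not available: the hypotheses of this lemma concern only the nonvanishing of structure constants and contain no ``inductive hypothesis'' that $P^{\ell}$ is a simple current of $\mathcal{U}_{k-1}$; and the simple-current property relevant to collapsing (\ref{lambda equation}) is a statement about the fusion numbers $N_{i,j}^{k}=\tilde{N}_{i,j}^{k}$ (Theorem \ref{same fusion rule}), which must be read off from Proposition \ref{fusion rules of virasoro modules} directly --- where it visibly fails for odd $m$ --- rather than inferred from quantum dimensions taken over $\mathcal{U}_{k-1}\otimes L\left(c_{k+7},0\right)$. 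Deleting your boundary discussion entirely leaves precisely the paper's proof.
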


\begin{proof}
$(3,p,i,j)\sim(3,i,p,j)$ implies
$$
\left\{\lambda_{3,p}^{p-2}\lambda_{i,j}^{p-2},\quad
\lambda_{3,p}^{p}\lambda_{i,j}^{p},\quad
\lambda_{3,p}^{p+2}\lambda_{i,j}^{p+2}\right\}
\sim
\left\{
\lambda_{3,i}^{i-2}\lambda_{p,j}^{i-2},\quad
\lambda_{3,i}^{i}\lambda_{p,j}^{i},\quad
\lambda_{3,i}^{i+2}\lambda_{p,j}^{i+2}\right\}.
$$
Using assumptions and the previous lemma, we get $\lambda_{i,j}^{p+2}\sim 1$.
\end{proof}

Now if we can prove $\lambda_{i,i}^3\sim 1$ for all $i$ compatible with fusion rules, 
we will get $\lambda_{i,j}^k\neq 0$ for all $i,j,k$ compatible with fusion rules by induction using the previous lemma.
In order to prove this fact, 
we need to consider two cases separately:
the parity of the biggest index, 
denoted by $m$, 
appears in fusion rules.

\subsection{When $m$ is odd}
The case when $m$ is assumed to be odd is under our first consideration.


\begin{lemma}
$\lambda_{m,m}^3\sim 1$.
\end{lemma}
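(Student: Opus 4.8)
The plan is to argue by contradiction, in exactly the spirit of Lemma \ref{3,i,i-2}: assuming $\lambda_{m,m}^3=0$, I would manufacture a subVOA of $\mathcal U$ whose index set violates the fusion-closure forced by Theorem \ref{c1}(ii). The reason a global argument is forced on us, rather than a local four-point computation, is that by the symmetry $\lambda_{m,m}^3\sim\lambda_{3,m}^m\sim\lambda_{m,3}^m$ of Theorem \ref{alternation}, every relation $(a,b,c,d)\sim(a,c,b,d)$ that would isolate $\lambda_{m,m}^3$ pairs it with one of its own equivalents and so degenerates to a tautology.

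First I would record the structural input coming from $m$ being the largest index that occurs. By Theorem \ref{same fusion rule} the fusion among the $U^i$ agrees with that of the Virasoro modules $Q^i$, and by Proposition \ref{fusion rules of virasoro modules} the self-fusion of the top module truncates, $U^m\boxtimes U^m=U^1\oplus U^3$; in particular $N_{m,m}^1=N_{m,m}^3=1$ are the only nonzero fusion coefficients issuing from $U^m\boxtimes U^m$, and since $m\ge 5$ we have $U^3\notin\{U^1,U^m\}$.

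Next, under the assumption $\lambda_{m,m}^3=0$, I would verify that $W:=U^1\oplus U^m$ is closed under $Y$. Indeed $Y(u^1,z)u^1\in U^1$, while $Y(u^1,z)u^m$ and $Y(u^m,z)u^1$ lie in $U^m$ (with the nonzero coefficients $\lambda_{1,m}^m,\lambda_{m,1}^m$ established earlier); and $Y(u^m,z)u^m$, which a priori lands in $U^1\oplus U^3$, has vanishing $U^3$-component because that component is governed precisely by $\lambda_{m,m}^3=0$, so it lands in $U^1$. Since $W\ni\mathbf{1},\omega$ and is stable under all modes, it is a subVOA of $\mathcal U$; moreover it is an extension of $\mathcal U_{k-1}\otimes L(c_{k+7},0)$ of the form $P^1\otimes Q^1\oplus P^m\otimes Q^m$ with all multiplicities equal to $1$, and it is simple (the gluing $U^m\times U^m\to U^1$ is nonzero, as $\lambda_{m,m}^1\neq 0$) and self-dual.

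Finally I would apply Theorem \ref{c1}(ii) to the extension $W\supset \mathcal U_{k-1}\otimes L(c_{k+7},0)$: the set of Virasoro modules $\{Q^1,Q^m\}$ must be closed under the tensor product of $\mathcal C_{L(c_{k+7},0)}$. But $Q^m\boxtimes Q^m=Q^1\oplus Q^3$ with $Q^3\notin\{Q^1,Q^m\}$, so the set is not fusion-closed, a contradiction. Hence $\lambda_{m,m}^3\neq 0$, i.e. $\lambda_{m,m}^3\sim 1$. The step I expect to be the main obstacle is checking that $W$ genuinely meets the hypotheses of Theorem \ref{c1} (simplicity, self-duality, and the multiplicity-one $\Hom$ conditions) so that part (ii) may legitimately be invoked; this rests on $\lambda_{m,m}^1\neq 0$ together with the self-duality of $U^m$ inherited from the self-duality of the Virasoro and $\mathcal U_{k-1}$ factors.
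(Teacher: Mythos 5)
Your proposal is correct and follows essentially the same route as the paper: assume $\lambda_{m,m}^3=0$, observe that $U^m\boxtimes U^m=U^1\oplus U^3$, conclude that $U^1\oplus U^m$ would then be a subVOA of $\mathcal{U}$, and contradict the fusion-closure of the component modules guaranteed by Theorem \ref{c1}(ii). Your write-up merely makes explicit the closure verification and the hypotheses of Theorem \ref{c1} that the paper's three-line proof leaves implicit.
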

\begin{proof}
It can be checked directly that $U^m\boxtimes U^m=U^1+U^3$.
If $\lambda_{m,m}^3=0$, then $V=U^1+U^m$ is a subVOA of $\mathcal{U}$.
The contradiction rises since $U^m\boxtimes U^m$ was supposed to be closed under tensor products in $\mathcal{C}_V$ by Theorem (\ref{c1})(ii). 
\end{proof}

\begin{lemma}
$\lambda_{a,b}^m\sim 1$ for all $a,b$ compatible with fusion rules.
\end{lemma}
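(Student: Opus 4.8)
The plan is to prove that $\lambda_{a,b}^m\neq 0$ whenever $N_{a,b}^m=1$ by an induction that pushes the non-vanishing already recorded for small indices up to all admissible triples, using the maximality of $m$ to keep the relevant instances of (\ref{lambda equation}) short. By Theorem \ref{alternation} the coefficient $\lambda_{a,b}^m$ may be freely cyclically permuted, so studying all $\lambda_{a,b}^m$ with $N_{a,b}^m=1$ is the same as studying all $\lambda_{a,m}^k$ with $N_{a,m}^k=1$ (since $\lambda_{a,b}^m\sim\lambda_{b,m}^a$); I will work in the latter normalization. The base cases are already available: $\lambda_{1,m}^m\sim 1$ because $U^m$ is an irreducible $U^1$-module, and the preceding lemma together with Lemma \ref{3,i,i-2} and Theorem \ref{alternation} give $\lambda_{m,m}^3\sim 1$, $\lambda_{3,m}^m\sim\lambda_{m,m}^3\sim 1$ and $\lambda_{3,m}^{m-2}\sim\lambda_{m,m-2}^3\sim 1$.

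The inductive engine would feed two copies of $U^m$ into the input slots of the commutativity relation (\ref{lambda equation}), say through relations of the shape $(3,a,m,m)\sim(3,m,a,m)$ and their alternates with varying output module. Two structural facts do the work. First, $U^m\boxtimes U^m=U^1\oplus U^3$ (as in the preceding lemma), so whenever two copies of $U^m$ collide the only admissible intermediate modules are $U^1$ and $U^3$, and the attached coefficients reduce to $\lambda_{m,m}^1\sim 1$ and $\lambda_{m,m}^3\sim 1$. Second, maximality of $m$ forces $N_{3,m}^{m+2}=0$, so fusing $U^3$ with $U^m$ yields only $U^{m-2}$ or $U^m$, with coefficients $\lambda_{3,m}^{m-2}$ and $\lambda_{3,m}^m$ that are exactly the base cases. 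Consequently one side of each such relation collapses to a short set built solely from already-known coefficients and a single unknown $\lambda_{a,m}^{\,\cdot}$, while the other side is governed by $U^m\boxtimes U^m$ and by $\lambda_{\cdot,\cdot}^3$ factors controlled by Lemma \ref{3,i,i-2}. Matching the two sides through Theorem \ref{samerank}, which equates the number of nonzero entries on each side, then forces the targeted $\lambda_{a,m}^k$ to be nonzero and advances the induction on $a$; throughout, Proposition \ref{fusion rules of virasoro modules} (applied on each tensor factor $P^i$ and $Q^i$) decides which triples are admissible, so that a coefficient which looks absent is in fact excluded by the fusion rules rather than genuinely vanishing. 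Where the equational route stalls I would fall back on the closure argument already used for $\lambda_{m,m}^3$: assuming a given compatible coefficient vanished would make a proper subset of the $U^i$ closed under $Y$, contradicting closure under fusion in Theorem \ref{c1}(ii).

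The main obstacle is that the relations unavoidably drag in the diagonal coefficients $\lambda_{i,i}^3$ for $i\neq m$, which are precisely the quantities not yet established at this point (indeed they are the target of the remaining lemmas of this subsection). The argument must therefore arrange the auxiliary indices so that every such diagonal term is either annihilated by a fusion rule — most often via $N_{3,m}^{m+2}=0$ or via $N_{m,m}^{\nu}=0$ for $\nu\notin\{1,3\}$ — or reduced to the single diagonal $\lambda_{m,m}^3$ that the preceding lemma already supplies. Because Theorem \ref{samerank} only compares the cardinalities of the nonzero sets and not their individual entries, the tally of admissible triples and the order of the induction on $a$ must be chosen so that at every step exactly one genuinely new coefficient remains undetermined; organizing this truncation, powered repeatedly by the maximality of $m$ and by $U^m\boxtimes U^m=U^1\oplus U^3$, is the delicate part of the proof.
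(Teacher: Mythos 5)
You have assembled the right toolbox (commutativity relations with copies of $U^m$ inserted, the truncation of fusion channels coming from maximality of $m$, Theorem \ref{samerank}, the subVOA fallback via Theorem \ref{c1}(ii)), but the engine you build from it does not run. Your driving relations $(3,a,m,m)\sim(3,m,a,m)$ are vacuous as soon as $a\geq 7$: on the left any intermediate $\beta$ must satisfy $N_{m,m}^{\beta}\neq 0$, hence $\beta\in\{1,3\}$, and then $N_{a,\beta}^{3}=0$; on the right any intermediate must lie in $\{m-a+1,m-a+3\}\cap\{m-2,m\}=\emptyset$. Both sides are empty, the relation carries no information, and your induction on $a$ stalls immediately after the base cases. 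If you instead vary the output module (for example $(a,a,m,m)\sim(a,m,a,m)$), the ``known'' side becomes $\left\{\lambda_{a,1}^{a}\lambda_{m,m}^{1},\ \lambda_{a,3}^{a}\lambda_{m,m}^{3}\right\}$, which reintroduces the diagonal coefficient $\lambda_{a,3}^{a}\sim\lambda_{a,a}^{3}$ that is precisely not yet known at this stage; Theorem \ref{samerank} then only yields that at least one of $\lambda_{a,m}^{m-a+1}$, $\lambda_{a,m}^{m-a+3}$ is nonzero, not both. You candidly flag this entanglement as ``the delicate part'' and leave it unresolved, but resolving it is the entire content of the lemma, so what you have is a plan rather than a proof. (The fallback is also not free of charge: a single vanishing coefficient does not produce a subVOA; in the paper's uses of that argument one must first propagate the vanishing to a whole family of coefficients.)

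The paper cuts through all of this with one well-chosen four-point function: $(m,i,m,i-2)\sim(m,m,i,i-2)$, i.e.\ output $U^{m}$ and inputs $(u^{i},u^{m},u^{i-2})$, rather than output $U^{3}$ and two copies of $u^{m}$. Maximality of $m$ then forces a \emph{single} channel on each side: on the left, $N_{m,i-2}^{\beta}\neq 0$ gives $\beta\in\{m-i+3,m-i+5\}$ and $N_{i,m-i+5}^{m}=0$, so $\beta=m-i+3$; on the right, $N_{i,i-2}^{\mu}\neq 0$ forces $\mu\geq 3$ while $N_{m,\mu}^{m}\neq 0$ forces $\mu\leq 3$, so $\mu=3$. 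The relation is therefore an equality of single products,
$$\lambda_{i,m-i+3}^{m}\,\lambda_{m,i-2}^{m-i+3}\ \sim\ \lambda_{m,m}^{3}\,\lambda_{i,i-2}^{3}\ \sim\ 1,$$
where the right-hand side is nonzero by the preceding lemma and Lemma \ref{3,i,i-2}; hence both unknown factors on the left are nonzero. Letting $i$ run over odd $3\le i\le m$ and applying Theorem \ref{alternation} covers every pair $(a,b)$ with $N_{a,b}^{m}=1$, with no induction, no cardinality counting, and no diagonal coefficient $\lambda_{i,i}^{3}$ ever appearing. The moral is that the unknown coefficients must be isolated by putting $U^{m}$ in the output slot, not $U^{3}$.
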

\begin{proof}
By $(m,i,m,i-2)\sim(m,m,i,i-2)$ we have
$$\lambda_{i,m-i+3}^m\lambda_{m,i-2}^{m-i+3}\sim\lambda_{m,m}^3\lambda_{i,i-2}^3\sim 1,$$
then
$\lambda_{m,i}^{m-i+3}\sim \lambda_{m,i-2}^{m-i+3}\sim 1$
and these are all cases of $a,b$ that is compatible with fusion rules.
\end{proof}
\begin{lemma}
$\lambda_{a,b}^3\sim 1$ for all $a,b$ compatible with fusion rules.
\end{lemma}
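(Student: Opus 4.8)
The plan is to reduce the statement to its diagonal case and then extract that case from a single braiding relation routed through the maximal module $U^m$. First I would note that a triple $\{a,b,3\}$ is compatible with the fusion rules only when the triangle inequality $|a-b|<3$ holds; since all indices here are odd, this forces either $|a-b|=2$ or $a=b$. The off-diagonal possibility is precisely $\lambda_{i,i-2}^3\sim 1$, already settled in Lemma \ref{3,i,i-2}, so the genuinely new content is the diagonal case $\lambda_{i,i}^3\sim 1$ for each compatible $i$ (necessarily $i\ge 3$). This is exactly the hypothesis that Lemma \ref{recursion} needs in order to run its induction, so establishing it closes the section.

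For the diagonal case I would apply the relation $(i,m,i,m)\sim(i,i,m,m)$ and read off equation (\ref{lambda equation}). On the side coming from $(i,m,i,m)$ the intermediate index $\mu$ must satisfy $N_{i,\mu}^i\neq 0$ and $N_{m,m}^\mu\neq 0$; since $U^m\boxtimes U^m=U^1\oplus U^3$ in the case under consideration, only $\mu\in\{1,3\}$ survive (with $\mu=3$ allowed because $\{i,i,3\}$ is compatible by hypothesis). This produces the two entries $\lambda_{i,1}^i\lambda_{m,m}^1$ and $\lambda_{i,3}^i\lambda_{m,m}^3$. The first is nonzero by irreducibility together with $\lambda_{m,m}^1\sim 1$, while in the second $\lambda_{m,m}^3\sim 1$ was just proved, so the entry is $\sim\lambda_{i,3}^i\sim\lambda_{i,i}^3$ by Theorem \ref{alternation}. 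Hence this side carries $1+[\lambda_{i,i}^3\neq 0]$ nonzero entries.

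On the side coming from $(i,i,m,m)$ each entry is $\lambda_{m,\beta}^i\lambda_{i,m}^\beta$, a product of two coefficients whose index triples both contain $m$, so each is $\sim 1$ by the lemma $\lambda_{a,b}^m\sim 1$. The decisive point is that there are exactly two admissible $\beta$: the triangle and boundary constraints confine $\beta$ to the length-two window $m-i<\beta\le m+3-i$, which contains precisely the two odd values $\beta=m+1-i$ and $\beta=m+3-i$ once $i\ge 3$. Thus this side carries exactly two nonzero entries, and comparing the two counts through Theorem \ref{samerank} forces $1+[\lambda_{i,i}^3\neq 0]=2$, i.e. $\lambda_{i,i}^3\neq 0$, as desired.

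The main obstacle, and the only place where the parity hypothesis on $m$ enters, is the bookkeeping of these two fusion windows: one must verify that $N_{m,m}^\mu\neq 0$ holds for exactly $\mu\in\{1,3\}$ and that $\{i,m,\beta\}$ is admissible for exactly the two stated values, so that the counts are genuinely $1+[\lambda_{i,i}^3\neq 0]$ against $2$. All remaining ingredients are the already-established facts that every $\lambda$ having an index equal to $m$ is $\sim 1$ and that $\lambda_{m,m}^3\sim 1$. I expect this window count to be the crux, since if the $\beta$-side instead collapsed to a single entry (as it does for the truly extremal module with $U^m\boxtimes U^m=U^1$), the same comparison would yield the opposite conclusion; confirming that two values fit is precisely what the ``$m$ odd'' case supplies.
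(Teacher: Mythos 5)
Your proof is correct and follows essentially the same route as the paper: the paper applies the braiding relation to the tuple $(m,m,i,j)$ (which handles the diagonal and off-diagonal cases at once), obtaining
$\left\{\lambda_{m,m}^1\lambda_{i,j}^1,\ \lambda_{m,m}^3\lambda_{i,j}^3\right\}\sim\left\{\lambda_{m,i}^{m+1-i}\lambda_{m,j}^{m+1-i},\ \lambda_{m,i}^{m+3-i}\lambda_{m,j}^{m+3-i}\right\}$,
and concludes exactly as you do, by counting nonzero entries via Theorem \ref{samerank} together with the previously proved $\lambda_{a,b}^m\sim 1$ and $\lambda_{m,m}^3\sim 1$. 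Your tuple $(i,m,i,m)$ is merely a permutation of the paper's four-point function built from two copies of $U^m$ and two of $U^i$; the only slip is a harmless label swap --- the channels $\{1,3\}$ arise on the $(i,i,m,m)$ side (from $U^m\boxtimes U^m$), while the channels $\{m+1-i,m+3-i\}$ arise on the $(i,m,i,m)$ side --- which does not affect the argument since the relation is symmetric.
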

\begin{proof}
Consider $(m,m,i,j)$ and we have
$$
\left\{\lambda_{m,m}^1\lambda_{i,j}^1,\quad
\lambda_{m,m}^3\lambda_{i,j}^3\right\}
\sim
\left\{\lambda_{m,i}^{m+1-i}\lambda_{m,j}^{m+1-i},\quad
\lambda_{m,i}^{m+3-i}\lambda_{m,j}^{m+3-i}\right\}.$$
Applying previous lemmas, the proof is direct.
\end{proof}



\begin{theorem}
If $m$ is odd, then $\lambda_{i,j}^k\neq 0$ for every $N_{i,j}^k =1$.
\end{theorem}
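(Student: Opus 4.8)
The plan is to obtain the theorem as a clean induction on the minimum index, feeding the three lemmas just proved into the bootstrapping machinery of Section 4.1. The crucial observation is that the immediately preceding lemma establishes $\lambda_{a,b}^3\sim 1$ for \emph{all} compatible $a,b$; in particular, taking $a=b=i$ yields $\lambda_{i,i}^3\sim 1$ for every odd $i$, which is exactly the standing hypothesis demanded by Lemma \ref{recursion}. Thus the only remaining task is to assemble the induction promised in the remark preceding Section 4.2.

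First I would record the full symmetry. Theorem \ref{alternation} gives $\lambda_{i,j}^k\sim\lambda_{j,i}^k\sim\lambda_{j,k}^i$; the transposition $i\leftrightarrow j$ together with the cyclic move $(i,j,k)\mapsto(j,k,i)$ generate all of $S_3$, so $\lambda_{i,j}^k\sim\lambda_{\sigma(i),\sigma(j)}^{\sigma(k)}$ for every permutation $\sigma$ of the three indices. Consequently it suffices to control $\lambda_{i,j}^k$ according to which of $i,j,k$ is smallest, since any index may be moved into any slot without changing whether $\lambda\sim 1$.

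Next I would set up the induction. Let $S(p)$ be the assertion that $\lambda_{i,j}^k\sim 1$ for every compatible triple with $\min\{i,j,k\}\le p$. For the base case $S(1)$: after using the $S_3$-symmetry to place the index $1$ in the first slot, the fusion rule forces $j=k$, and $\lambda_{1,k}^k\neq 0$ by the lemma recording that $U^k$ is an irreducible $U^1$-module; hence $S(1)$ holds. For the inductive step, suppose $S(p)$ holds for an odd $p$. Then Lemma \ref{recursion} applies verbatim and yields $\lambda_{i,j}^{p+2}\sim 1$ for every $(i,j)$ compatible with $p+2$; invoking the $S_3$-symmetry once more, this forces $\lambda\sim 1$ on every compatible triple whose smallest index equals $p+2$. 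Combined with $S(p)$ this gives $S(p+2)$.

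Since all indices are odd and bounded above by $m$, the induction reaches every compatible triple after finitely many steps, so $\lambda_{i,j}^k\neq 0$ whenever $N_{i,j}^k=1$. I expect the genuine content to lie not in this assembly but in having already secured $\lambda_{a,b}^3\sim 1$ and $\lambda_{a,b}^m\sim 1$ through the closure-under-fusion contradictions built on Theorem \ref{c1}(ii); the main delicacy in the present step is purely bookkeeping, namely checking that Lemma \ref{recursion}'s conclusion about the \emph{target} index $p+2$, after symmetrization, genuinely covers every triple whose \emph{minimum} index is $p+2$, which is precisely what keeps the induction from leaving gaps.
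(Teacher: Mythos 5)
Your proposal is correct and takes essentially the same route as the paper: the paper's one-sentence proof likewise combines the immediately preceding lemma (which gives $\lambda_{a,b}^3\sim 1$ and hence the standing hypothesis $\lambda_{i,i}^3\sim 1$ of Lemma \ref{recursion}) with Lemma \ref{recursion} and alternation of indices. Your write-up simply makes explicit the induction on the minimal index, the base case $S(1)$, and the $S_3$-symmetrization that the paper compresses into the phrase ``with proper alternations of indices.''
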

\begin{proof}
This is a direct consequence of previous lemma and Lemma (\ref{recursion}) with proper
alternations of indices.
\end{proof}

\subsection{When $m$ is even}
Thanks to Lemma (\ref{recursion}), 
we only need
$\lambda_{i,i}^3\sim 1$
whenever $N_{i,i}^3=1$.
Several lemmas are needed to establish the proof
since $N_{m+1,m+1}^3=0$.

\begin{lemma}\label{5,i,i}
$\lambda_{i,i}^5\sim 1$, $\forall i$ when $N_{i,i}^5=1$.
\end{lemma}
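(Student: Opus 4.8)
The plan is to isolate $\lambda_{i,i}^5$ from a single commutativity relation of four-point functions, feeding in the off-diagonal values $\lambda_{i,i\pm 2}^3\sim 1$ already secured in Lemma \ref{3,i,i-2} and then counting nonzero channels with Theorem \ref{samerank}. Fix $i$ with $N_{i,i}^5=1$. The hypothesis forces $i$ to sit at distance at least two below the top label $m+1$, so that $U^{i-2},U^{i},U^{i+2}$ are all genuine summands and moreover $N_{i,i}^1=N_{i,i}^3=1$, $N_{i,i+2}^3=1$, and $N_{3,3}^3=N_{3,5}^3=1$. I would treat $i=3$ and $i\ge 5$ separately, since the relation below degenerates (becomes trivial) exactly when the two inner indices coincide.

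For $i\ge 5$ I would expand $(3,3,i,i)\sim(3,i,3,i)$ by means of (\ref{lambda equation}). The left-hand intermediate channels are $\beta\in U^i\boxtimes U^i$ with $N_{3,\beta}^3=1$, i.e.\ $\beta\in\{1,3,5\}$, while the right-hand ones are $\mu\in U^3\boxtimes U^i$ with $N_{i,\mu}^3=1$, i.e.\ $\mu\in\{i-2,i,i+2\}$. This yields
\[
\left\{\lambda_{3,1}^3\lambda_{i,i}^1,\ \lambda_{3,3}^3\lambda_{i,i}^3,\ \lambda_{3,5}^3\lambda_{i,i}^5\right\}\sim\left\{\lambda_{i,i-2}^3\lambda_{3,i}^{i-2},\ \lambda_{i,i}^3\lambda_{3,i}^{i},\ \lambda_{i,i+2}^3\lambda_{3,i}^{i+2}\right\}.
\]
Using $\lambda_{3,1}^3\sim\lambda_{i,i}^1\sim 1$ and $\lambda_{3,5}^3\sim\lambda_{5,3}^3\sim 1$, using Lemma \ref{3,i,i-2} for $\lambda_{i,i\pm 2}^3\sim 1$, and using Theorem \ref{alternation} to rewrite $\lambda_{3,i}^{i\pm 2}\sim\lambda_{i,i\pm 2}^3\sim 1$ and $\lambda_{3,i}^{i}\sim\lambda_{i,i}^3$, the right multiset has $2+[\lambda_{i,i}^3\neq 0]$ nonzero entries, while the left has $1+[\lambda_{3,3}^3\lambda_{i,i}^3\neq 0]+[\lambda_{i,i}^5\neq 0]$.

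Finally I would invoke Theorem \ref{samerank}, which equates these two counts. Since $[\lambda_{3,3}^3\lambda_{i,i}^3\neq 0]\le[\lambda_{i,i}^3\neq 0]$, the only way the counts can agree is $[\lambda_{i,i}^5\neq 0]=1$; hence $\lambda_{i,i}^5\neq 0$ independently of whether $\lambda_{i,i}^3$ vanishes. This is the crux of the argument: $\lambda_{i,i}^3$ is still undetermined at this point (indeed in the even-$m$ regime $N_{m+1,m+1}^3=0$ blocks the top-index route available in the odd case), so a term-by-term matching is impossible and it is precisely the rank identity that rescues the conclusion.

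For the base value $i=3$, where the inner indices coincide and the relation is vacuous, I would instead argue by closure. Since $U^3\boxtimes U^3=U^1+U^3+U^5$, a vanishing $\lambda_{3,3}^5$ would leave $U^1\oplus U^3$ invariant under $Y$, hence a subalgebra, forcing $\{Q^1,Q^3\}$ to be closed under fusion by Theorem \ref{c1}(ii); this contradicts $N_{3,3}^5=1$, so $\lambda_{3,3}^5\neq 0$. The only place I expect real care, rather than difficulty, is the bookkeeping of the channel index sets at the upper boundary $i=m-1$, where $i+2=m+1$: there one must confirm that $N_{i,i+2}^3=1$ genuinely holds so that no right-hand channel is silently dropped and the count $2+[\lambda_{i,i}^3\neq0]$ remains valid.
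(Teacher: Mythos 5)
Your proof is correct and is essentially the paper's own: for $i\ge 5$ the paper likewise expands $(3,3,i,i)$ into the multiset relation $\left\{1,\ \lambda_{3,3}^3\lambda_{i,i}^3,\ \lambda_{3,3}^5\lambda_{i,i}^5\right\}\sim\left\{1,\ \left(\lambda_{i,i}^3\right)^2,\ 1\right\}$ and concludes by the nonzero-count comparison of Theorem \ref{samerank}, exactly as you do. Your separate handling of the degenerate case $i=3$ (which the paper's uniform ``$i\ge 3$'' silently skips, the relation being vacuous there) is a worthwhile refinement, though that case also follows at once from Lemma \ref{3,i,i-2} and Theorem \ref{alternation} via $\lambda_{3,3}^5\sim\lambda_{3,5}^3\sim\lambda_{5,3}^3\sim 1$, without needing the subalgebra closure argument.
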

\begin{proof}
Consider $(3,3,i,i)$, $i\geq 3$.
We get 
$$
\left\{1,\quad
\lambda_{3,3}^3\lambda_{i,i}^3,\quad
\lambda_{3,3}^5\lambda_{i,i}^5\right\}
\sim 
\left\{1,\quad
\left(\lambda_{i,i}^3\right)^2,\quad
1\right\}.$$
Thus $\lambda_{i,i}^5\sim 1$.
\end{proof}

\begin{lemma}\label{5,i,i+4}
$\lambda_{i,i+4}^5\sim 1$, $\forall i$ when $N_{i,i+4}^5=1$.
\end{lemma}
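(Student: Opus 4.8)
The plan is to run a single instance of the commutativity relation (\ref{lambda equation}) in which both index sets collapse to one element, so that the resulting $\sim$ is a relation between two monomials and $\lambda_{i,i+4}^5$ can be read off directly. The tuple I would use is $(3,3,i,i+4)\sim(3,i,3,i+4)$.

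First I would determine the two channels allowed by the fusion rules. On the side built from $(3,3,i,i+4)$ the intermediate index $\beta$ must satisfy $N_{i,i+4}^\beta\neq0$ and $N_{3,\beta}^3\neq0$; the first condition forces $\beta\geq5$ (this is where the hypothesis $N_{i,i+4}^5=1$ enters) while the second forces $\beta\in\{1,3,5\}$, so $\beta=5$ is the unique channel and the monomial is $\lambda_{3,5}^3\,\lambda_{i,i+4}^5$. On the side built from $(3,i,3,i+4)$ the intermediate index $\mu$ must satisfy $N_{3,i+4}^\mu\neq0$ and $N_{i,\mu}^3\neq0$, i.e. $\mu\in\{i+2,i+4,i+6\}$ and $\mu\in\{i-2,i,i+2\}$, which forces $\mu=i+2$ and the monomial $\lambda_{i,i+2}^3\,\lambda_{3,i+4}^{i+2}$. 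Since both sides are singletons, (\ref{lambda equation}) becomes the relation between single numbers $\lambda_{3,5}^3\,\lambda_{i,i+4}^5\sim\lambda_{i,i+2}^3\,\lambda_{3,i+4}^{i+2}$.

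Next I would clear every factor other than $\lambda_{i,i+4}^5$ using only results already in hand. By Theorem (\ref{alternation}) each auxiliary factor is equivalent to a diagonal-shift coefficient of the form $\lambda_{j,j-2}^3$: namely $\lambda_{3,5}^3\sim\lambda_{5,3}^3$, $\lambda_{i,i+2}^3\sim\lambda_{i+2,i}^3$, and $\lambda_{3,i+4}^{i+2}\sim\lambda_{i+4,i+2}^3$. All three are $\sim1$ by Lemma (\ref{3,i,i-2}), so the relation reduces to $\lambda_{i,i+4}^5\sim1$, proving the lemma.

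The real content is the choice of tuple, which is the step I would be most careful about. Simpler-looking candidates such as $(5,i,3,i+4)$ drag in terms like $\lambda_{3,3}^5\sim\lambda_{5,5}^3$ and $\lambda_{3,i+4}^{i+4}\sim\lambda_{i+4,i+4}^3$, i.e. exactly the diagonal coefficients $\lambda_{j,j}^3$ that are not available when $m$ is even (their unavailability is the very reason for the detour through the $\lambda^5$ coefficients), and they also introduce genuinely unknown quantities such as $\lambda_{i,i+2}^5$ or $\lambda_{i,i+4}^7$. The tuple $(3,3,i,i+4)$ is engineered precisely so that both surviving channels, $5$ and $i+2$, feed only the known $\lambda_{j,j-2}^3$ family. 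I would then double-check that the fusion-rule truncation never opens a third channel and that the argument survives the boundary cases ($i=1$, or $i+4$ equal to the largest index), where one of the sets $\{i+2,i+4,i+6\}$ shrinks but still meets $\{i-2,i,i+2\}$ only in $i+2$.
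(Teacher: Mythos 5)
Your proposal is correct and follows essentially the same route as the paper: the paper's proof uses exactly the tuple $(3,3,i,i+4)$, observes that the fusion rules $N_{i,i+4}^1=N_{i,i+4}^3=N_{3,i+4}^{i-2}=N_{3,i+4}^{i}=0$ leave a single surviving channel on each side, and then clears the remaining coefficients using Lemma \ref{3,i,i-2} together with the transposition equivalences of Theorem \ref{alternation}, just as you do. Your extra remarks on why alternative tuples fail and on boundary cases are consistent with, but not needed beyond, the paper's argument.
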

\begin{proof}
This comes from $(3,3,i,i+4)$:
$$
\left\{\lambda_{3,3}^1\lambda_{i,i+4}^1,\quad
\lambda_{3,3}^3\lambda_{i,i+4}^3,\quad
\lambda_{3,3}^5\lambda_{i,i+4}^5\right\}
\sim
\left\{
\lambda_{3,i}^{i-2}\lambda_{3,i+4}^{i-2},\quad
\lambda_{3,i}^{i}\lambda_{3,i+4}^{i},\quad
\lambda_{3,i}^{i+2}\lambda_{3,i+4}^{i+2}\right\}.
$$
Since $N_{i,i+4}^1=N_{i,i+4}^3=N_{3,i+4}^{i-2}=N_{3,i+4}^{i}=0$ by fusion rules,
and we proved Lemma (\ref{3,i,i-2}),
the equation becomes $\lambda_{i,i+4}^5\sim 1$.
\end{proof}


\begin{lemma}
If either $\lambda_{3,3}^3=0$ or $\lambda_{5,5}^3=0$,
then $\lambda_{4i+1,4i+1}^3=0$ and $\lambda_{5,i+2}^i=0$, $\forall i$.
\end{lemma}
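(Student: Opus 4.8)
The plan is to run everything through the multiset relation (\ref{lambda equation}) and the rank count of Theorem (\ref{samerank}), applied to a handful of well chosen four point functions, and then to propagate the vanishing by induction on the odd index. First I would observe that under either hypothesis one in fact has $\lambda_{5,5}^3=0$: if this is what is assumed there is nothing to do, while if $\lambda_{3,3}^3=0$, then the $(3,3,5,5)$ instance of the relation used in the proof of Lemma (\ref{5,i,i}), namely $\{1,\lambda_{3,3}^3\lambda_{5,5}^3,\lambda_{3,3}^5\lambda_{5,5}^5\}\sim\{1,(\lambda_{5,5}^3)^2,1\}$, has left side of rank $2$ (because $\lambda_{3,3}^5\lambda_{5,5}^5\sim1$ by Lemma (\ref{5,i,i})), so Theorem (\ref{samerank}) forces $\lambda_{5,5}^3=0$. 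By Theorem (\ref{alternation}) this is the same as $\lambda_{3,5}^5=0$, which I record for later use.

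Next I would extract two relations. Feeding $(3,3,i+2,i)\sim(3,i+2,3,i)$ into (\ref{lambda equation}), the fusion rules leave only the intermediate channels $\mu\in\{i,i+2\}$ and $\beta\in\{3,5\}$; collapsing every factor that is $\sim1$ by Lemmas (\ref{3,i,i-2}) and (\ref{5,i,i}) together with Theorem (\ref{alternation}) reduces it to
\[\left\{\lambda_{i,i}^3,\ \lambda_{i+2,i+2}^3\right\}\sim\left\{\lambda_{3,3}^3,\ \lambda_{5,i+2}^i\right\},\]
which I call (A). Feeding $(5,3,i,i+2)\sim(5,i,3,i+2)$ into (\ref{lambda equation}) and reducing the surviving channels by Lemmas (\ref{5,i,i}), (\ref{5,i,i+4}) and (\ref{3,i,i-2}), the one term carrying a factor $\lambda_{3,5}^5$ drops out because $\lambda_{3,5}^5=0$, and the rank count of Theorem (\ref{samerank}) then forces
\[\lambda_{5,i+2}^i\cdot\lambda_{i+2,i+2}^3=0,\]
which I call (B).

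With (A) and (B) in hand I would induct on odd $i$. Abbreviating $a_j:=\lambda_{2j+1,2j+1}^3$ and $b_i:=\lambda_{5,i+2}^i$, relation (A) reads $\{a_{(i-1)/2},a_{(i+1)/2}\}\sim\{a_1,b_i\}$, relation (B) reads $a_{(i+1)/2}b_i=0$, and the base case is $a_2=b_3=0$. At an index with $a_{(i-1)/2}=0$ and $a_1\neq0$, equality of the number of nonzero entries in (A) forces $a_{(i+1)/2}\neq0$ and $b_i=0$; at an index with $a_{(i-1)/2}\neq0$, relation (A) only yields $a_{(i+1)/2}\sim b_i$, and (B) then forces both to vanish. (If instead $a_1=0$ the same bookkeeping makes every $a_j$ vanish, which is stronger than needed.) The outcome is the alternating pattern in which $a_j=0$ whenever $j$ is even together with $b_i=0$ for every $i$; since $4i+1=2(2i)+1$ this is exactly $\lambda_{4i+1,4i+1}^3=0$ and $\lambda_{5,i+2}^i=0$ for all $i$.

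The main obstacle is the fusion-rule bookkeeping that collapses each four point relation to the two-entry form (A) and the single product identity (B): one has to determine precisely which intermediate channels $\mu,\beta$ survive, confirm that the leftover factors are exactly those already normalized to $\sim1$ in Lemmas (\ref{3,i,i-2}), (\ref{5,i,i}) and (\ref{5,i,i+4}), and check the truncation of the fusion rules near the maximal index $m$ so that (A) and (B) stay valid over the whole relevant range of $i$. The second delicate point is the indispensability of (B): at the indices $i\equiv3\pmod{4}$ relation (A) leaves only the ambiguity $a_{(i+1)/2}\sim b_i$, and it is precisely the product identity (B) --- the one place where the hypothesis $\lambda_{5,5}^3=0$ is genuinely consumed --- that breaks the tie and closes the induction.
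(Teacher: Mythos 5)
Your proof is correct, and its core is the same as the paper's: your relation (A), extracted from $(3,3,i+2,i)$, is exactly the paper's first relation in (\ref{(3,3,i,i+2)}) up to Theorem (\ref{alternation}), and your induction producing the alternating pattern $\lambda_{4t-1,4t-1}^3\sim 1$, $\lambda_{4t+1,4t+1}^3=0$ together with $\lambda_{5,i+2}^i=0$ is the same bookkeeping the paper performs. There are two local deviations worth recording. First, your constraint (B) comes from $(5,3,i,i+2)$ and reads $\lambda_{i+2,i+2}^3\lambda_{5,i+2}^i=0$, whereas the paper derives $\lambda_{3,i}^{i}\lambda_{5,i+2}^{i}=0$ from $(3,5,i,i+2)$; either product identity drives the same induction, so this difference is immaterial. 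Second, and more substantively, your reduction of the hypothesis $\lambda_{3,3}^3=0$ to $\lambda_{5,5}^3=0$ is genuinely different from the paper's and is tighter: you invoke the $(3,3,5,5)$ instance of the relation in Lemma (\ref{5,i,i}), where the left side has exactly two nonzero entries because $\lambda_{3,3}^5\lambda_{5,5}^5\sim 1$, so Theorem (\ref{samerank}) forces $\left(\lambda_{5,5}^3\right)^2=0$. The paper instead puts $i=3$ in its second relation, and under $\lambda_{3,3}^3=0$ the rank count there only says that exactly one of $\left(\lambda_{3,5}^5\right)^2$ and $\lambda_{3,5}^7$ is nonzero, which by itself does not exclude the case $\lambda_{3,5}^5\neq 0$, $\lambda_{3,5}^7=0$; your argument closes that gap. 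The one caveat you flag yourself --- validity of the channel lists for (A) and (B) near the maximal index $m$ --- is left equally implicit in the paper, so it does not put your proof behind the paper's.
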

\begin{proof}
Consider $(3,3,i,i+2)$ and $(3,5,i,i+2)$,
and we get 
\begin{equation}\label{(3,3,i,i+2)}
\begin{aligned}
	\left\{
\lambda_{3,3}^3,\quad\lambda_{i,i+2}^{5}\right\}&\sim
\left\{\lambda_{3,i}^{i},\quad
\lambda_{3,i+2}^{i+2}\right\},\\
\left\{\lambda_{3,5}^5\lambda_{i,i+2}^{5},\quad
\lambda_{i,i+2}^{7}\right\}
&\sim
\left\{\lambda_{3,i}^{i}\lambda_{5,i+2}^{i},\quad
1\right\}.
\end{aligned}
\end{equation}


The case $\lambda_{5,5}^3=0$:
This make
$\lambda_{3,i}^{i}\lambda_{5,i+2}^{i}=0$.
We can multiply $\lambda_{3,i}^i$ on both side of the first equation.
If $\lambda_{3,3}^3=0$, 
we get $\lambda_{3,i}^{i}=0$ for all $i$,
thus $\lambda_{5,i+2}^i=0$, $\forall i$.
If $\lambda_{3,3}^3\sim 1$, 
we have $\lambda_{4t-1,4t-1}^3\sim 1$ and $\lambda_{4t+1,4t+1}^3=0$
for any $t\geq 1$.
Furthermore, $\lambda_{5,i+2}^i=0$, $\forall i$.

The case $\lambda_{3,3}^3=0$:
Put $i=3$ in the second equation and we get $\lambda_{5,5}^3=0$.
It has been discussed already.
\end{proof}


\begin{lemma}
$\lambda_{3,3}^3\sim\lambda_{5,5}^3\sim 1$.
\end{lemma}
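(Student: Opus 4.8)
The plan is to argue by contradiction and to manufacture a proper sub-VOA whose constituents fail to be closed under fusion, contradicting Theorem (\ref{c1})(ii) exactly as in Lemma (\ref{3,i,i-2}). Suppose that $\lambda_{3,3}^3=0$ or $\lambda_{5,5}^3=0$. By the previous lemma either possibility forces $\lambda_{4i+1,4i+1}^3=0$ for all $i$ and, crucially, $\lambda_{5,i+2}^i=0$ for all $i$ compatible with fusion rules. The whole argument consists in showing that these vanishings are incompatible with the module structure of $\mathcal{U}$.

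The device I would use is the $\mathbb{Z}_2$-grading assigning to $U^i$ the grade $(i-1)/2 \bmod 2$; thus grade $0$ corresponds to $i\equiv 1\pmod 4$ and grade $1$ to $i\equiv 3\pmod 4$. I claim that, under the standing assumption, the grade-zero part $V'=\bigoplus_{i\equiv 1\,(4)}U^i$ is a sub-VOA of $\mathcal{U}$. Concretely this amounts to proving $\lambda_{a,b}^c=0$ whenever $a\equiv b\equiv 1$, $c\equiv 3\pmod 4$ and $N_{a,b}^c=1$, i.e. that the VOA product of two grade-zero vectors stays in grade zero. The base instances come straight from the previous lemma: the diagonal couplings $\lambda_{4i+1,4i+1}^3=0$ are grade-crossing and already vanish, and for $b\equiv 1\pmod 4$ the only grade-crossing channels of $U^5\boxtimes U^b$ land in $U^{b\pm 2}$, where Theorem (\ref{alternation}) turns $\lambda_{5,i+2}^i=0$ into $\lambda_{5,b}^{b\pm 2}=0$. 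The remaining grade-crossing coefficients I would eliminate by induction on $a+b$: feeding a quadruple of the form $(3,a',b',c)$ into the commutativity relation (\ref{lambda equation}) and invoking the rank-matching of Theorem (\ref{samerank}) together with $\lambda_{i,i}^5\sim 1$ and $\lambda_{i,i+4}^5\sim 1$ from Lemmas (\ref{5,i,i}) and (\ref{5,i,i+4}), each grade-crossing $\lambda_{a,b}^c$ is expressed through strictly smaller ones already known to vanish. This inductive closure check, which must be carried out uniformly for both branches $\lambda_{3,3}^3=0$ and $\lambda_{3,3}^3\sim 1$ left open by the previous lemma, is the step I expect to be the main obstacle, since it requires careful bookkeeping of the admissible fusion channels at each stage.

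Once $V'$ is known to be a sub-VOA, the contradiction is immediate. The algebra $V'=\bigoplus_{i\equiv 1\,(4)}P^i\otimes Q^i$ is a proper extension of $U^1=P^1\otimes Q^1$ (it omits $U^3$), so Theorem (\ref{c1})(ii) forces the index set $\{i:i\equiv 1\pmod 4\}$ to be closed under the fusion product $\boxtimes$. However $(1,3)$ occurs in $(1,5)\times(1,5)$, so $N_{5,5}^3=1$ while $3\not\equiv 1\pmod 4$; that is, $U^5\boxtimes U^5\ni U^3$ escapes $V'$. This contradiction rules out both $\lambda_{3,3}^3=0$ and $\lambda_{5,5}^3=0$, whence $\lambda_{3,3}^3\sim\lambda_{5,5}^3\sim 1$.
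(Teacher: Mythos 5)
Your overall architecture coincides with the paper's: argue by contradiction, invoke the previous lemma to get $\lambda_{4i+1,4i+1}^3=0$ and $\lambda_{5,i+2}^i=0$, show that $V'=\bigoplus_{i\equiv 1\,(4)}U^i$ would then be a sub-VOA of $\mathcal{U}$, and contradict Theorem (\ref{c1})(ii) via $N_{5,5}^3=1$. The endpoints are fine; the gap is exactly the step you leave open, namely proving $\lambda_{4p+1,4q+1}^{4r+3}=0$ for \emph{all} $p,q,r$, and this is where essentially all of the paper's proof lives.

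The concrete problem is that your proposed mechanism --- commutativity quadruples of the form $(3,a',b',c)$ with induction on $a+b$ --- cannot close. For $a,b,c\equiv 1\pmod 4$, the relation $(3,a,b,c)\sim(3,b,a,c)$ reduces, using $\lambda_{3,i}^{i\pm2}\sim 1$ and $\lambda_{a,a}^3=\lambda_{b,b}^3=0$, to the statement that $\left\{\lambda_{a,c}^{b-2},\lambda_{a,c}^{b+2}\right\}$ and $\left\{\lambda_{b,c}^{a-2},\lambda_{b,c}^{a+2}\right\}$ contain the same number of nonzero entries. All four couplings have the same index sum $a+b+c$, so these relations never express a grade-crossing coupling ``through strictly smaller ones'': no induction on $a+b$ (or on the total sum) gains traction. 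For instance, to kill $\lambda_{9,9}^{11}$ the only nontrivial relations of your form pair $\left\{\lambda_{9,9}^{11},\lambda_{9,9}^{15}\right\}$ against $\left\{\lambda_{13,9}^{7},\lambda_{13,9}^{11}\right\}$; even knowing $\lambda_{13,9}^{7}=0$ this only bounds the number of nonzero entries, it does not force $\lambda_{9,9}^{11}=0$. Allowing mixed-grade probe-$3$ quadruples reintroduces the branch-dependent diagonals $\lambda_{i,i}^3$, $i\equiv 3\pmod 4$, which is precisely the bookkeeping problem you flagged. The paper sidesteps all of this by probing with $U^5$ instead of $U^3$: in the quadruple $(5,4r+3,4p+1,4q+1)$ every coefficient $\lambda_{5,i}^{j}$ is already known in both branches ($\sim 1$ for $j=i,i\pm4$ by Lemmas (\ref{5,i,i}) and (\ref{5,i,i+4}), and $=0$ for $j=i\pm2$ by the previous lemma), so the relation collapses to the two-term recursion $\lambda_{4(p+1)+1,4q+1}^{4r+3}\sim\lambda_{4p+1,4q+1}^{4(r+1)+3}$, which chains down to $\lambda_{5,4q+1}^{4(p+r)+3}=0$. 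Replacing your probe-$3$ induction by this probe-$5$ recursion (after reducing, as the paper does, to the single case $\lambda_{5,5}^3=0$) is what is needed to complete the argument.
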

\begin{proof}
Otherwise if we can proof $\lambda_{4p+1,4q+1}^{4r+3}=0$ for all $p,q\geq 1$ and $r\geq 0$,
we will get a subVOA $V$ of $\mathcal{U}$ where $V=\sum_{n=1}^t U^{4n+1}$.
Again there is an $n$ such that the fusion rule of $U^n\boxtimes U^n$ is contradicted to the fact that $\mathcal{C}_V$ is closed under tensor products.


It's enough to assume $\lambda_{5,5}^3\sim 0$ and use previous lemma to assert the contradiction.







Our origion assumption asserts the case $r'=0,p'=1,q'=1$.
We need to show that if $\lambda_{4p+1,4q+1}^{4r+3}=0$ for every $p\leq p',q\leq q',r\leq r'$,
then $\lambda_{4p'+1,4q'+1}^{4(r'+1)+3}=0$ and  $\lambda_{4(p'+1)+1,4q'+1}^{4r'+3}=0$.

Consider $(5,4r+3,4p+1,4q+1)$,
we get
$$\left\{\begin{aligned}
&\lambda_{5,4r+3}^{4r-1}\lambda_{4p+1,4q+1}^{4(r-1)+3},\quad
\lambda_{5,4r+3}^{4r+1}\lambda_{4p+1,4q+1}^{4r+1},\quad
\lambda_{5,4r+3}^{4r+3}\lambda_{4p+1,4q+1}^{4r+3}\\
&\lambda_{5,4r+3}^{4r+5}\lambda_{4p+1,4q+1}^{4(r+1)+1},\quad
\lambda_{5,4r+3}^{4r+7}\lambda_{4p+1,4q+1}^{4(r+1)+3}
\end{aligned}\right\}
$$
$$
\sim\left\{\begin{aligned}
 & \lambda_{5,4p+1}^{4p-3}\lambda_{4r+3,4q+1}^{4(p-1)+1},\quad
\lambda_{5,4p+1}^{4p-1}\lambda_{4r+3,4q+1}^{4(p-1)+3},\quad
\lambda_{5,4p+1}^{4p+1}\lambda_{4r+3,4q+1}^{4p+1}\\
&\lambda_{5,4p+1}^{4p+3}\lambda_{4r+3,4q+1}^{4p+3},\quad
\lambda_{5,4p+1}^{4p+5}\lambda_{4r+3,4q+1}^{4(p+1)+1}
\end{aligned}
\right\}.
$$

Since we have Lemma (\ref{5,i,i}), Lemma (\ref{5,i,i+4}) and assumption against previous Lemma,
we already know $\lambda_{5,i}^j$ for all $i,j$.
So we can simplify the equation as
$$\left\{\lambda_{4p+1,4q+1}^{4(r-1)+3},\quad
\lambda_{4p+1,4q+1}^{4(r+1)+3}
\right\}
\sim 
\left\{\lambda_{4r+3,4q+1}^{4(p-1)+1},\quad
\lambda_{4r+3,4q+1}^{4(p+1)+1}\right\}.$$

By assumption we get $\lambda_{4(p+1)+1,4q+1}^{4r+3}\sim\lambda_{4p+1,4q+1}^{4(r+1)+3}$.
So we have
$$\lambda_{4(p+1)+1,4q+1}^{4r+3}\sim\lambda_{4p+1,4q+1}^{4(r+1)+3}\sim\cdots\sim\lambda_{5,4q+1}^{4p+4r+3}\sim 0,$$
the last relation coming from the previous lemma that
$\lambda_{5,i+2}^i=0$
and from fusion rules that $N_{i,j}^5=0$ if $|i-j|>4$.

Put $r=r'$ and we get what we need.
\end{proof}

\begin{lemma}
$\lambda_{i,i}^3\neq 0$ for any odd $i$ compatible with fusion rules.
\end{lemma}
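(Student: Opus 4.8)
The plan is to argue by strong induction on the odd index $i$. The base cases $i=3$ and $i=5$ are precisely the previous lemma, $\lambda_{3,3}^3\sim\lambda_{5,5}^3\sim 1$; combined with Lemma (\ref{recursion}) a proof of the present statement completes the case $m$ even. So I fix an odd $i\ge 7$, assume $\lambda_{j,j}^3\neq 0$ for every odd $j<i$, and keep in mind the already established facts $\lambda_{j,j}^5\sim 1$ (Lemma \ref{5,i,i}), $\lambda_{j,j+4}^5\sim 1$ (Lemma \ref{5,i,i+4}) and $\lambda_{j,j-2}^3\sim 1$ (Lemma \ref{3,i,i-2}).

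First I would extract the two relations in which $\lambda_{i,i}^3$ occurs most economically. Feeding the known values into $(3,3,i-2,i)\sim(3,i-2,3,i)$ yields
\[
\left\{1,\ \lambda_{i-2,i}^5\right\}\sim\left\{\lambda_{i-2,i-2}^3,\ \lambda_{i,i}^3\right\},
\]
while feeding them into $(5,i-2,i,3)\sim(5,i,i-2,3)$ yields
\[
\left\{1,\ \lambda_{i,i}^3\,\lambda_{i-2,i}^5,\ 1\right\}\sim\left\{1,\ \lambda_{i-2,i-2}^3\,\lambda_{i-2,i}^5,\ 1\right\}.
\]
Since $\lambda_{i-2,i-2}^3\neq 0$ by the induction hypothesis, Theorem (\ref{samerank}) reduces the first line to $\lambda_{i,i}^3\sim\lambda_{i-2,i}^5$ and the second to the implication $\lambda_{i-2,i}^5\neq 0\Rightarrow\lambda_{i,i}^3\neq 0$. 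Thus the whole inductive step collapses to one point: proving that the ``$\pm 2$'' channel $\lambda_{i-2,i}^5$ of the $U^5$-fusion is nonzero.

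This $\pm 2$ channel is exactly the quantity left undetermined by the earlier lemmas (which pinned down only the diagonal $\lambda_{j,j}^5$ and the $\pm 4$ channel $\lambda_{j,j+4}^5$), and it is where the real work lies. I would handle it by a global, sub-VOA argument of the same type used in the two preceding vanishing lemmas, rather than by a purely local count, because Theorem (\ref{samerank}) by itself cannot detect an isolated vanishing. Concretely, I would assume for contradiction that $\lambda_{i_0-2,i_0}^5=0$ (equivalently $\lambda_{i_0,i_0}^3=0$) for some odd $i_0$, and propagate this vanishing through the $U^5$-braiding relations together with the neighbour relation in the first display, aiming to force an entire arithmetic-progression family of these $\pm 2$ channels to vanish. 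That would make $V=\bigoplus_{i\in S}U^i$, over the resulting index set $S$, a proper sub-VOA of $\mathcal{U}$, contradicting Theorem (\ref{c1})(ii), since the fusion $U^s\boxtimes U^s$ of a suitable summand $U^s$ of $V$ must produce a module outside $S$. The nonzero anchor $\lambda_{3,5}^5\sim\lambda_{5,5}^3\neq 0$ pins the progression at $i=5$ and forbids its total collapse.

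The hard part will be ruling out the ``isolated zero'' scenario: a single vanishing does not by itself produce a closed index set, so one must verify that the propagation genuinely upgrades it to a periodic one. This is exactly where the assumption that $m$ is even should enter, through the asymmetric truncation of the fusion rules at the top index $m$ (recall $N_{m+1,m+1}^3=0$). Confirming that the truncated four-point functions at the boundary still present equal numbers of intermediate channels on both sides—so that Theorem (\ref{samerank}) remains applicable there—is the delicate bookkeeping on which the contradiction ultimately rests.
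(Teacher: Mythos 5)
Your reduction is sound and matches the paper's own starting point: the relation you derive from $(3,3,i-2,i)\sim(3,i-2,3,i)$ is exactly the first equation of (\ref{(3,3,i,i+2)}) (shifted by two), and combined with the induction hypothesis it gives $\lambda_{i,i}^3\sim\lambda_{i-2,i}^5$; your second relation, as you yourself observe, adds nothing beyond this. But from that point on the proposal is a plan, not a proof. The entire content of the lemma is now the nonvanishing of $\lambda_{i-2,i}^5$, and for this you offer only the outline of a sub-VOA contradiction (propagate the zero to a closed index set $S$, then invoke Theorem \ref{c1}(ii)), while conceding that ``the hard part will be ruling out the isolated zero scenario.'' That hard part \emph{is} the lemma: an isolated zero does not by itself produce a collection of summands closed under fusion, so no sub-VOA arises and Theorem \ref{c1}(ii) yields no contradiction. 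Nothing in your text indicates how the propagation would actually close up, so the inductive step is left open.

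Moreover, the premise on which you rejected a local argument --- that ``Theorem \ref{samerank} by itself cannot detect an isolated vanishing'' --- is false, and this is precisely where the paper's proof succeeds. Let $t+2$ be the first odd index with $\lambda_{t+2,t+2}^3=0$ (so $t\ge5$ and $\lambda_{k,k}^3\sim1$ for odd $k\le t$). The key point you missed is that the relation in (\ref{(3,3,i,i+2)}), namely $\left\{\lambda_{3,3}^3,\ \lambda_{i,i+2}^5\right\}\sim\left\{\lambda_{i,i}^3,\ \lambda_{i+2,i+2}^3\right\}$, contains the diagonal coefficient at \emph{two consecutive} positions, so a single zero kills \emph{both} adjacent $U^5$-channels: putting $i=t$ forces $\lambda_{t,t+2}^5=0$, and putting $i=t+2$ forces $\lambda_{t+2,t+4}^5=0$, in each case by counting nonzero entries with Theorem \ref{samerank}. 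Now compare the two orderings of the four-point function $(3,5,t+2,t+2)$: every channel of $(3,t+2,5,t+2)$ carries a factor $\lambda_{5,t+2}^{t}\sim\lambda_{t,t+2}^5$, or $\lambda_{t+2,t+2}^3$, or $\lambda_{5,t+2}^{t+4}\sim\lambda_{t+2,t+4}^5$, hence vanishes; whereas the channel $\beta=5$ of $(3,5,t+2,t+2)$ equals $\lambda_{5,5}^3\lambda_{t+2,t+2}^5\sim1$ by Lemma \ref{5,i,i} and the lemma immediately preceding this one. The two sides then have different numbers of nonzero entries, contradicting Theorem \ref{samerank}. This closes the induction in three lines, purely locally and with no sub-VOA argument; your proposal would need to be completed by exactly this (or an equivalent) missing step before it constitutes a proof.
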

\begin{proof}
We already have $\lambda_{3,3}^3\sim\lambda_{5,5}^3\sim 1$.
Assume $\lambda_{k,k}^3\sim 1$ for $k\leq t$, where $t\geq 5$, but $\lambda_{t+2,t+2}^3=0$.

Recall the first equation in (\ref{(3,3,i,i+2)}).
Put $i=t$, and we get $\lambda_{t,t+2}^5=0$.
Put $i=t+2$, and we get $\lambda_{t+2,t+4}^5=0$.

Consider $(3,5,i,i)$, $\forall i\geq 3$,
$$\left\{1,\quad
\lambda_{i,i}^7\right\}
\sim
\left\{\lambda_{5,i}^{i-2},\quad
\lambda_{5,i}^{i+2}\right\}.
$$   

Put $i=t+2$ and we have
$\{1,\lambda_{t+2,t+2}^7\}\sim\{0,0\}.$
We get the contradiction by Theorem (\ref{samerank}).
\end{proof}

\begin{theorem}
If $m$ is even, then $\lambda_{i,j}^k\neq 0$ for every $N_{i,j}^k =1$.
\end{theorem}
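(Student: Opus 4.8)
The plan is to reduce the even case to exactly the endgame already run in the odd case, now that the preceding lemma has supplied the one missing ingredient, $\lambda_{i,i}^3\sim 1$ for every odd $i$ compatible with the fusion rules. First I would promote this diagonal statement to the full statement $\lambda_{a,b}^3\sim 1$ for all admissible $(a,b)$. The admissible targets are $b\in\{a-2,a,a+2\}$, and these three are covered respectively by Lemma \ref{3,i,i-2} (which gives $\lambda_{i,i-2}^3\sim 1$), by the preceding diagonal lemma, and by Lemma \ref{3,i,i-2} again after the transposition $\lambda_{a,a+2}^3\sim\lambda_{a+2,a}^3$, all of this legitimated by the permutation symmetry $\lambda_{i,j}^k\sim\lambda_{j,i}^k\sim\lambda_{j,k}^i$ of Theorem \ref{alternation}. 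This is precisely the input from which the odd case was concluded.

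Next I would verify that the standing hypothesis of Lemma \ref{recursion}, namely $\lambda_{i,i}^3\sim 1$ for every odd $i$, is now in force, so that lemma becomes a genuine inductive step: from $\lambda_{i,j}^k\sim 1$ for every compatible triple with $\min\{i,j,k\}\le p$ it produces $\lambda_{i,j}^{p+2}\sim 1$ for every compatible pair $(i,j)$. The base of the induction, $\min\{i,j,k\}\le 3$, I would settle directly: when $\min=1$ one index is a vacuum label, $N_{1,b}^c=1$ forces $b=c$, and $\lambda_{1,k}^k\neq 0$, $\lambda_{k,1}^k\neq 0$, $\lambda_{k,k}^1\neq 0$ give all such coefficients; when $\min=3$ the symmetry of Theorem \ref{alternation} lets me place the index $3$ in the top slot, reducing to the already-established $\lambda_{a,b}^3\sim 1$.

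For the inductive step I would pass from ``$\min\le p$'' to ``$\min\le p+2$''. Any compatible triple with $\min=p+2$ has all three indices $\ge p+2$; using the full $S_3$-symmetry I move the minimal index $p+2$ into the exponent position, rewriting the coefficient as $\lambda_{a,b}^{p+2}$ with $a,b\ge p+2$, and then Lemma \ref{recursion} yields $\lambda_{a,b}^{p+2}\sim 1$ — note that in its defining relation $(3,p,i,j)\sim(3,i,p,j)$ every factor other than the sought $\lambda_{i,j}^{p+2}$ carries an index $\le p$ and is therefore nonzero, so Theorem \ref{samerank} forces the last factor to be nonzero as well. Combining with the inductive hypothesis closes the induction, giving $\lambda_{i,j}^k\neq 0$ for every $N_{i,j}^k=1$.

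I would emphasize that for the theorem itself there is no remaining obstacle: the whole difficulty of the even case was absorbed into proving $\lambda_{i,i}^3\sim 1$. That was the delicate point because, unlike the odd case where the top module satisfies $U^m\boxtimes U^m=U^1+U^3$ and lets one read off $\lambda_{m,m}^3\neq 0$ immediately from the closure statement of Theorem \ref{c1}(ii), here $N_{m+1,m+1}^3=0$, so the diagonal had to be reached indirectly through $\lambda_{i,i}^5$, $\lambda_{i,i+4}^5$ and the simultaneous non-vanishing $\lambda_{3,3}^3\sim\lambda_{5,5}^3\sim 1$. Once past that, the present theorem is a formal induction identical in shape to its odd-case counterpart, carried out ``with proper alternations of indices'' via Theorem \ref{alternation} and Lemma \ref{recursion}.
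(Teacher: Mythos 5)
Your proof is correct and follows essentially the same route as the paper, which disposes of the even case in two sentences: the preceding lemma ($\lambda_{i,i}^3\sim 1$ for all odd $i$) activates the hypothesis of Lemma \ref{recursion}, and recursion with index alternations via Theorem \ref{alternation} finishes the argument. Your write-up merely makes explicit the details the paper leaves implicit — the base case $\min\{i,j,k\}\le 3$, the use of $S_3$-symmetry to move the minimal index into the superscript, and the counting via Theorem \ref{samerank} inside the inductive step — all of which match the paper's intended reasoning.
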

\begin{proof}
The previous lemma asserts Lemma (\ref{recursion}). 
We can get what we want by recursion.
\end{proof}

\section{The uniqueness of $(\mathcal{U},Y)$}
In this section, we fix basis $\{e_{i,j}\}$ and 
$Y(\cdot,z)$ a VOA structure on $\mathcal{U}$ such that $\lambda_{i,j}^k=1$
for all $i,j,k$ compatible with fusion rules.
Assume there is another VOA structure $\overline{Y}(\cdot,z)$ on $\mathcal{U}$,
and we will prove that the square of $\lambda_{b,c}^a$ for $\overline{Y}(\cdot,z)$
equals $1$ whenever $N_{b,c}^a=1$
if braiding matrix satisfies some non-vanishing conditions.
These conditions can be checked case by case for any fixed $\mathcal{U}_k$,
and we get $(\mathcal{U},\overline{Y}(\cdot,z))\cong (\mathcal{U},Y(\cdot,z))$,
thus proving the uniqueness of $\mathcal{U}$.

\begin{lemma}
$\lambda_{1,k}^k=\lambda_{k,1}^k=\lambda_{k,k}^1=1$ for all $k$.
\end{lemma}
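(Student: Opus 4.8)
The plan is to establish the three identities one at a time, in the order $\lambda_{1,k}^k=1$, then $\lambda_{k,1}^k=1$, then $\lambda_{k,k}^1=1$, exploiting at each stage the reference structure $Y$ (for which every structure constant equals $1$) to pin down a relation purely between the fixed basis intertwining operators $\mathcal{I}_{b,c}^a$, and then transporting that relation to $\overline{Y}$.

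First I would treat $\lambda_{1,k}^k$. Since $N_{1,k}^a\neq 0$ only for $a=k$, we have $\overline{Y}(\mathbf{1},z)|_{U^k}=\lambda_{1,k}^k\,\mathcal{I}_{1,k}^k(\mathbf{1},z)$. The reference structure $Y$ already satisfies $Y(\mathbf{1},z)=\operatorname{id}$ together with $\lambda_{1,k}^k=1$, which forces the normalization $\mathcal{I}_{1,k}^k(\mathbf{1},z)=\operatorname{id}_{U^k}$ of the fixed basis. Feeding this into the vacuum axiom $\overline{Y}(\mathbf{1},z)=\operatorname{id}$ gives $\lambda_{1,k}^k\operatorname{id}=\operatorname{id}$, hence $\lambda_{1,k}^k=1$. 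This first step is the crucial enabler: because $\lambda_{1,c}^c=1$ for every $c$, the action of the whole subalgebra $U^1$, and in particular of $\omega$, on each summand $U^c$ coincides for $Y$ and $\overline{Y}$. Consequently the operators $L(n)$, the grading, and the invariant bilinear form $\langle\cdot,\cdot\rangle$ (unique by Theorem \ref{bilinear form}, since $\mathcal{U}_1=0$ and $\dim\mathcal{U}_0=1$, cf.\ Remark \ref{self-dual}) are shared by the two structures, so the skew-symmetry and adjoint identities written for $\overline{Y}$ involve exactly the same $L(n)$ and $\langle\cdot,\cdot\rangle$ as those for $Y$.

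For $\lambda_{k,1}^k$ I would invoke skew-symmetry. Any VOA structure satisfies $\overline{Y}(u^k,z)u^1=e^{zL(-1)}\overline{Y}(u^1,-z)u^k$, which in the basis reads $\lambda_{k,1}^k\,\mathcal{I}_{k,1}^k(u^k,z)u^1=\lambda_{1,k}^k\,e^{zL(-1)}\mathcal{I}_{1,k}^k(u^1,-z)u^k$. Running this for the reference $Y$ (all constants equal to $1$) yields the purely basis-level identity $\mathcal{I}_{k,1}^k(u^k,z)u^1=e^{zL(-1)}\mathcal{I}_{1,k}^k(u^1,-z)u^k$; substituting it back into the $\overline{Y}$ relation and using $\lambda_{1,k}^k=1$ gives $\lambda_{k,1}^k=\lambda_{1,k}^k=1$. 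The last constant $\lambda_{k,k}^1$ I would obtain analogously from the adjoint formula $\langle \overline{Y}(u^k,z)v^k,u^1\rangle=\langle v^k,\overline{Y}(e^{zL(1)}(-z^{-2})^{L(0)}u^k,z^{-1})u^1\rangle$. As $N_{k,k}^1=N_{k,1}^k=1$, the two sides single out $\lambda_{k,k}^1$ and $\lambda_{k,1}^k$ respectively; the identical computation for $Y$ equates the two bracketed quantities at the basis level, and comparing with the $\overline{Y}$ version while using $\lambda_{k,1}^k=1$ and the nondegeneracy of $\langle\cdot,\cdot\rangle$ forces $\lambda_{k,k}^1=1$.

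I expect the main obstacle to be precisely the point flagged above: one must be certain that $Y$ and $\overline{Y}$ carry the \emph{same} conformal vector, hence the same operators $L(n)$, and the same invariant bilinear form, before the skew-symmetry and adjoint identities for the two structures can be compared term by term. This is exactly why $\lambda_{1,k}^k=1$ has to be disposed of first, since it is what guarantees that $\omega$ acts identically under both structures and thereby synchronizes the conformal data and the bilinear form used in the remaining two steps.
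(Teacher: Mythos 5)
Your proposal is correct and, for two of the three constants, is exactly the paper's own argument: you obtain $\lambda_{k,1}^k=1$ from skew symmetry combined with $\lambda_{1,k}^k=1$, and $\lambda_{k,k}^1=1$ from the adjoint identity for the invariant bilinear form (unique by Remark \ref{self-dual} and Theorem \ref{bilinear form}) combined with $\lambda_{k,1}^k=1$; this matches the paper step for step. The one place you genuinely deviate is the first constant. The paper does not use the vacuum axiom there: it invokes the inductive hypothesis that $\mathcal{U}_{k-1}$ has a unique VOA structure, which gives $\overline{Y}(u,z)=Y(u,z)$ as operators on all of $\mathcal{U}$ for every $u\in U^1$, and reads off $\lambda_{1,k}^k=1$ from that. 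Your vacuum-axiom argument is more economical, but it does strictly less: it pins the scalar $\lambda_{1,k}^k$ to $1$ only after one already knows that $\overline{Y}(u^1,z)|_{U^k}$ lies in the one-dimensional space spanned by the fixed intertwiner $\mathcal{I}_{1,k}^k$, i.e.\ that $\overline{Y}$ restricted to $U^1\otimes U^k$ is an intertwining operator for the \emph{reference} $U^1$-module structure. That well-definedness is precisely what the induction hypothesis buys in the paper: it identifies $\overline{Y}|_{U^1}$ with $Y|_{U^1}$, so that the constants $\lambda_{b,c}^a$ for $\overline{Y}$ make sense at all. Under the framing of Section 5, where these constants are assumed to exist for $\overline{Y}$, your argument is a valid alternative proof of $\lambda_{1,k}^k=1$; just note that it presupposes the framework rather than grounding it, which is the actual role the induction hypothesis plays. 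Your closing observation—that $\lambda_{1,c}^c=1$ for all $c$ synchronizes the operators $L(n)$ and the bilinear form before skew symmetry and the adjoint identity can be compared across the two structures—is a correct point that the paper uses implicitly without comment.
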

\begin{proof}    
Since we assume $\mathcal{U}_{n-1}$ has a unique VOA structure,
we have $Y(u,z)=\overline{Y}(u,z)$ for all $u\in U^1$, that is $\lambda_{1,k}^k=1$.    

For any $u^{1}\in U^{1}$, $u^{k}\in U^{k}$, skew symmetry of $Y\left(\cdot,z\right)$
and $\overline{Y}\left(\cdot,z\right)$ ( \cite{FHL} ) imply

$$
\overline{Y}(u^{k},z)u^{1}=e^{zL\left(-1\right)}\overline{Y}\left(u^{1},-z\right)u^{k}=e^{zL\left(-1\right)}Y(u^{1},-z)u^{k}=Y\left(u^{k},z\right)u^{1}=\mathcal{I}_{k,1}^{k}\left(u^{k},z\right)u^{1}.
$$
In the mean time, $\overline{Y}\left(u^{k},z\right)u^{1}=\lambda_{k,1}^{k}\cdot\mathcal{I}_{k,1}^{k}(u^{k},z)u^{1}$.
Thus we get $\lambda_{k,1}^{k}=1$.

Note that by Remark \ref{self-dual}, $\mathcal{U}$ has a unique
invariant bilinear form $\left\langle \cdot,\cdot\right\rangle $
with $\left\langle 1,1\right\rangle =1$. For $u^{1}\in U^{1}$ and
$u^{k},v^{k}\in U^{k}$, we have
$$
\left\langle Y\left(u^{k},z)v^{k}\right),u^{1}\right\rangle =\left\langle v^{k},Y\left(e^{zL\left(-1\right)}\left(-z^{-2}\right)^{L\left(0\right)}u^{k},z^{-1}\right)u^{1}\right\rangle .
$$
That is,
$$
\left\langle \mathcal{I}_{k,k}^{1}\left(u^{k},z\right)v^{k},u^{1}\right\rangle 
=\left\langle v^{k},\mathcal{I}_{k,1}^{k}\left(e^{zL\left(-1\right)}\left(-z^{-2}\right)^{L\left(0\right)}u^{k},z^{-1}\right)u^{1}\right\rangle .
$$
The invariant bilinear form on $\left(\mathcal{U},\overline{Y}\right)$
gives
$$
\left\langle \lambda_{k,k}^{1}\cdot\mathcal{I}_{k,k}^{1}\left(u^{k},z\right)v^{k},u^{1}\right\rangle 
=\left\langle v^{k},\lambda_{k,1}^{k}\cdot\mathcal{I}_{k,1}^{k}\left(e^{zL\left(-1\right)}\left(-z^{-2}\right)^{L\left(0\right)}u^{k},z^{-1}\right)u^{1}\right\rangle .
$$
Since $\lambda_{k,1}^k=1$, we get $\lambda_{k,k}^{1}=1$.
\end{proof}

\begin{theorem}
$\lambda_{i,j}^k=\lambda_{j,i}^k=\lambda_{j,k}^i$ for all $i,j,k$ compatible with fusion rules.
\end{theorem}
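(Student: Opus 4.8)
The plan is to rerun the argument of Theorem~\ref{alternation}, but to upgrade the weak relation $\sim$ to honest equality by playing the new structure $\overline{Y}$ off against the reference structure $Y$, for which every coefficient equals $1$. The crucial observation is that both $Y$ and $\overline{Y}$ are expanded in the \emph{same} fixed intertwining-operator bases $\{\mathcal{I}_{b,c}^a\}$, so the skew-symmetry isomorphisms and the braiding data attached to these bases are intrinsic and independent of which vertex operator we use. Since $Y$ is a genuine vertex operator algebra with all $\lambda_{b,c}^a=1$, each structural identity (skew-symmetry, commutativity) holds for the bases $\mathcal{I}_{b,c}^a$ with coefficient $1$; feeding the same identity to $\overline{Y}$ then compares the two sets of coefficients directly, and the common four-point functions---nonzero by the linear independence recalled in Section~\ref{subsec:Braiding-matrix}---may be cancelled to give equalities of scalars rather than mere proportionalities. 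Note that, unlike the squaring statement that follows, this symmetry needs no non-vanishing hypothesis on the braiding matrices.

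First I would exploit skew-symmetry to handle the two lower indices. For the reference $Y$, expanding $Y(u^i,z)u^j=e^{zL(-1)}Y(u^j,-z)u^i$ and projecting onto the summand $U^k$ yields the basis identity $\mathcal{I}_{i,j}^k(u^i,z)u^j=e^{zL(-1)}\mathcal{I}_{j,i}^k(u^j,-z)u^i$. Applying skew-symmetry of $\overline{Y}$ and the same projection gives $\lambda_{i,j}^k\,\mathcal{I}_{i,j}^k(u^i,z)u^j=\lambda_{j,i}^k\,e^{zL(-1)}\mathcal{I}_{j,i}^k(u^j,-z)u^i$; substituting the reference identity and cancelling the nonzero intertwining-operator expression yields $\lambda_{i,j}^k=\lambda_{j,i}^k$. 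Next I would use commutativity $(1,i,j,k)\sim(1,j,i,k)$ from Proposition~\ref{extension property}, exactly as in Theorem~\ref{alternation}: since $N_{a,b}^1\neq0$ iff $a=b$, each side has a single channel, so the relation reads $\lambda_{i,i}^1\lambda_{j,k}^i\,F=\lambda_{j,j}^1\lambda_{i,k}^j\,F'$, where $F,F'$ are the four-point functions of the fixed bases. The reference $Y$ forces $F=F'\neq0$, and by the preceding lemma $\lambda_{i,i}^1=\lambda_{j,j}^1=1$, whence $\lambda_{j,k}^i=\lambda_{i,k}^j$. The two transpositions furnished by these steps generate the full symmetric group on the three slots, so $\lambda_{i,j}^k$ is totally symmetric; the claimed chain is realized as $\lambda_{j,i}^k\overset{(1)}{=}\lambda_{i,j}^k\overset{(2)}{=}\lambda_{k,j}^i\overset{(1)}{=}\lambda_{j,k}^i$, where $(1)$ and $(2)$ refer to the two steps above.

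I expect the only delicate point to be the bookkeeping that makes the equalities exact rather than proportional: one must check that the skew-symmetry map and the braiding matrices really are common to $Y$ and $\overline{Y}$, so that the reference identities with coefficient $1$ may be substituted verbatim, and that the relevant four-point functions are genuinely nonzero for a suitable choice of vectors so that the cancellation is legitimate. One should also record that every $\lambda$ appearing is defined, i.e. that the fusion rules $N_{i,j}^k$ are themselves invariant under the permutations used, which holds since the modules $U^i$ are self-dual and the fusion rules of Theorem~\ref{same fusion rule} are symmetric.
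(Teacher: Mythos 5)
Your proposal is correct and is essentially the paper's own argument made explicit: the paper's one-line proof (``the proof of Theorem~\ref{alternation} and previous lemma'') is precisely your step (2) --- commutativity against the vacuum channel $(1,i,j,k)\sim(1,j,i,k)$, where $N_{a,b}^1\neq 0$ iff $a=b$ forces a single channel, combined with $\lambda_{i,i}^1=\lambda_{j,j}^1=1$ from the preceding lemma and with the reference structure $Y$ forcing the four-point (braiding) factor to equal $1$. If anything, your write-up is more complete than the paper's: the commutativity relation alone only yields the transposition $\lambda_{j,k}^i=\lambda_{i,k}^j$ of the upper index with the first lower index, which by itself cannot generate the swap of the two lower indices, so your explicit skew-symmetry step giving $\lambda_{i,j}^k=\lambda_{j,i}^k$ (with the reference structure pinning the proportionality constant to $1$) is genuinely needed to obtain the full $S_3$-symmetry --- a step the paper leaves implicit in its appeal to the previous lemma, whose proof applies the skew-symmetry technique only in the special case where one index is the vacuum.
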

\begin{proof}
This comes from the proof of Theorem (\ref{alternation}) and previous lemma.
\end{proof}

\begin{lemma}\label{partunitary}
$B^{\operatorname{T}}\tilde{B}=I$.
\end{lemma}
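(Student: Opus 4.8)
The plan is to specialize the matrix identity (\ref{braid matrix and lambda}) to the fixed vertex operator algebra structure $Y$, for which every structure constant satisfies $\lambda_{i,j}^k=1$ whenever $N_{i,j}^k=1$. The identity (\ref{braid matrix and lambda}) was derived solely from the commutativity relation $(a,b,c,d)\sim(a,c,b,d)$ supplied by Proposition (\ref{extension property}); since $(\mathcal{U},Y)$ is an honest vertex operator algebra, that relation holds for the constants of $Y$, and hence so does (\ref{braid matrix and lambda}).

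First I would substitute $\lambda_{i,j}^k=1$ into the two diagonal matrices flanking the braiding matrices. The left factor $\operatorname{diag}(\lambda_{b,\beta_1}^a\lambda_{c,d}^{\beta_1},\ldots,\lambda_{b,\beta_N}^a\lambda_{c,d}^{\beta_N})$ has entries that are products of two structure constants of $Y$ indexed by channels $\beta_j$ compatible with the fusion rules (so that both $N_{b,\beta_j}^a=1$ and $N_{c,d}^{\beta_j}=1$); each such product equals $1$, so this matrix is the $N\times N$ identity $I$. The same holds for the right factor $\operatorname{diag}(\lambda_{c,\mu_1}^a\lambda_{b,d}^{\mu_1},\ldots)$. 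With both diagonal factors equal to $I$, equation (\ref{braid matrix and lambda}) collapses to $B^{\operatorname{T}}\tilde{B}=I$, which is precisely the assertion.

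As an internal check one can instead read the conclusion off (\ref{DJY equations}): setting all $\lambda=1$, its diagonal instances give $\sum_{\beta}B_{\beta,\mu_i}\tilde{B}_{\beta,\mu_i}=1$, so the diagonal entries of $B^{\operatorname{T}}\tilde{B}$ are $1$; the off-diagonal entries vanish because, as in the derivation of (\ref{braid matrix and lambda}), the right-hand side of the commutativity relation contains only the diagonal terms $e_{\mu,\mu}$, forcing $(B^{\operatorname{T}}\tilde{B})_{i,j}=0$ for $i\neq j$.

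I do not anticipate a genuine obstacle: the entire content lies in recognizing that the existence of the honest vertex operator algebra $(\mathcal{U},Y)$ with trivial structure constants forces the braiding matrices of the $P^i$ and the $Q^i$ to be mutually inverse-transpose. The only point requiring care is the bookkeeping of indices — verifying that $\beta$, $\mu$ and $\gamma$ all range over the same set of channels compatible with the fusion rules, so that $B$, $\tilde{B}$ and the two diagonal matrices are square of a common size $N$, and that each diagonal entry that actually occurs corresponds to a compatible channel on which $\lambda=1$.
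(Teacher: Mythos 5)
Your proposal is correct and matches the paper's proof exactly: the paper's own argument is simply to recall equation (\ref{braid matrix and lambda}) and observe that for the chosen $Y$ all $\lambda_{i,j}^k=1$, so both diagonal matrices become the identity and the equation collapses to $B^{\operatorname{T}}\tilde{B}=I$. Your write-up just makes explicit the index bookkeeping that the paper leaves implicit.
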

\begin{proof}
Recall (\ref{braid matrix and lambda}).
This is the direct consequence from the $Y(\cdot,z)$ we chose.
\end{proof}



Fusion rules show that the longest direct summand appears at the $(2K+1)$st position when $m=4K+1$ and at the $(2K+3)$rd position when $m=4K+3$.
When $m=4K$, the longest direct summand appears at the $(2K+1)$st position. 
When $m=4K+2$, the longest direct summand appear at both the $(2K+1)$st and $(2K+3)$rd position,
and in this paper we use the $(2K+1)$st one.
We denote the module with the longest fusion rule by $U^t$.


\begin{lemma}
$\left(\lambda_{t,t}^a\right)^2=1$ if $\left(\tilde{B}_{a,t}^{a,t}\right)_{U^1,U^t}\neq 0$.
\end{lemma}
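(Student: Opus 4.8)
The plan is to feed the four-point function $(a,a,t,t)$ into the commutativity relation $(a,a,t,t)\sim(a,t,a,t)$ for $\overline{Y}$ and then read off a single entry of the resulting matrix identity. Writing out (\ref{braid matrix and lambda}) for this configuration, with $B=B_{a,t}^{a,t}$ and $\tilde{B}=\tilde{B}_{a,t}^{a,t}$, gives
\[
B^{\operatorname{T}}C\tilde{B}=D,\qquad C=\operatorname{diag}\left(\lambda_{a,\beta}^a\lambda_{t,t}^\beta\right)_\beta,\quad D=\operatorname{diag}\left(\lambda_{t,\mu}^a\lambda_{a,t}^\mu\right)_\mu,
\]
where $\beta$ runs over the channels of $U^t\boxtimes U^t$ and $\mu$ over those of $U^a\boxtimes U^t$. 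The point of choosing this particular four-point function is that the vacuum row $\beta=U^1$ and the longest-fusion column $\mu=U^t$ both label admissible channels: $U^1\in U^t\boxtimes U^t$ always, and since $N_{a,t}^t=N_{t,t}^a=1$ we also have $U^t\in U^a\boxtimes U^t$.

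The key step I would take is to convert this two-sided braiding identity into a conjugation. Invoking Lemma (\ref{partunitary}), $B^{\operatorname{T}}\tilde{B}=I$, and using that $B,\tilde{B}$ are invertible, we have $B^{\operatorname{T}}=\tilde{B}^{-1}$, so the displayed identity becomes $\tilde{B}^{-1}C\tilde{B}=D$, that is $C\tilde{B}=\tilde{B}D$. Because $C$ and $D$ are diagonal, comparing the $(U^1,U^t)$ entries of both sides isolates the scalar relation
\[
C_{U^1,U^1}\left(\tilde{B}_{a,t}^{a,t}\right)_{U^1,U^t}=\left(\tilde{B}_{a,t}^{a,t}\right)_{U^1,U^t}D_{U^t,U^t}.
\]
Now I would evaluate the two diagonal entries: the previous lemma $\lambda_{a,1}^a=\lambda_{t,t}^1=1$ gives $C_{U^1,U^1}=\lambda_{a,1}^a\lambda_{t,t}^1=1$, while the index-symmetry theorem in the form $\lambda_{a,t}^t=\lambda_{t,t}^a$ gives $D_{U^t,U^t}=\lambda_{t,t}^a\lambda_{a,t}^t=\left(\lambda_{t,t}^a\right)^2$. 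Hence the relation reads $\left(\tilde{B}_{a,t}^{a,t}\right)_{U^1,U^t}=\left(\lambda_{t,t}^a\right)^2\left(\tilde{B}_{a,t}^{a,t}\right)_{U^1,U^t}$, and cancelling the common factor $\left(\tilde{B}_{a,t}^{a,t}\right)_{U^1,U^t}$, which is nonzero exactly by hypothesis, yields $\left(\lambda_{t,t}^a\right)^2=1$.

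The hard part is not the computation but the bookkeeping that makes everything collapse to one entry: one must select the four-point function $(a,a,t,t)$ so that the braiding coefficient appearing in the hypothesis sits precisely in the vacuum row and the longest-fusion column, confirm that these indices are admissible channels, and check that the two diagonal entries evaluate cleanly to $1$ and $\left(\lambda_{t,t}^a\right)^2$ via the unit lemma and index symmetry. The recognition that Lemma (\ref{partunitary}) turns (\ref{braid matrix and lambda}) into a similarity $C\tilde{B}=\tilde{B}D$ of diagonal matrices is what lets a single off-diagonal entry comparison do all the work; the nonvanishing assumption on $\left(\tilde{B}_{a,t}^{a,t}\right)_{U^1,U^t}$ is exactly what is needed to divide it out, since without it the identity degenerates to $0=0$.
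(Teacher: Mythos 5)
Your proposal is correct and follows essentially the same route as the paper: specialize the relation (\ref{braid matrix and lambda}) to the four-point function $(a,a,t,t)$, use Lemma (\ref{partunitary}) to rewrite it as $C\tilde{B}=\tilde{B}D$, and compare the $(U^1,U^t)$ entry, evaluating the diagonal entries via $\lambda_{a,1}^a=\lambda_{t,t}^1=1$ and the index symmetry $\lambda_{a,t}^t=\lambda_{t,t}^a$. The only cosmetic difference is that the paper keeps a general $b$ and sets $b=a$ at the end (its entrywise identity (\ref{abtt}) is reused later), while you specialize $b=a$ from the outset and spell out the admissibility of the channels $\beta=U^1$, $\mu=U^t$ explicitly.
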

\begin{proof}
Consider $(a,b,t,t)$ in (\ref{braid matrix and lambda}), and we have
$$
\operatorname{diag}(\lambda_{a,b}^{\beta_1}\lambda_{t,t}^{\beta_1},\ldots,\lambda_{a,b}^{\beta_N}\lambda_{t,t}^{\beta_N})\tilde{B}
=\tilde{B}\operatorname{diag}(\lambda_{a,t}^{\mu_1}\lambda_{b,t}^{\mu_1},\ldots,\lambda_{a,t}^{\mu_N}\lambda_{b,t}^{\mu_N}).$$
Compare the $(i,j)$-element in the matrix of each side of the euqation,
and we have 
\begin{equation}\label{abtt}
\lambda_{a,b}^{\beta_i}\lambda_{t,t}^{\beta_i}\tilde{B}_{i,j}=\tilde{B}_{i,j}\lambda_{a,t}^{\mu_j}\lambda_{b,t}^{\mu_j}
\end{equation}
for all $i$ and $j$.
Choose $(i,j)$ such that $\beta_i=U^1$ and $\mu_j=U^t$.
It is possible by computing the fusion rules.
Since $\lambda_{t,t}^1=1$ and $b=a$, 
we get $\tilde{B}_{i,j}=\tilde{B}_{i,j}\left(\lambda_{a,t}^{\mu_i}\right)^2$.
So we have $\left(\lambda_{t,t}^a\right)^2=1$ if 
$\left(\tilde{B}_{a,t}^{a,t}\right)_{U^1,U^t}\neq 0$. 
\end{proof}

\begin{theorem}\label{squarelambda}
For all $i,j,k$ compatible with fusion rules, 
$\left(\lambda_{p,q}^r\right)^2=1$ if 
$\left(\tilde{B}_{p,t}^{q,t}\right)_{U^r,U^t}\neq 0$.
\end{theorem}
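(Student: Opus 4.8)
The plan is to specialize the master relation (\ref{braid matrix and lambda}) to the four--point tuple $(a,b,c,d)=(p,q,t,t)$, whose associated braiding matrix is exactly $\tilde{B}_{p,t}^{q,t}$, and then to read off a single matrix entry. First I would invoke Lemma \ref{partunitary}, $B^{\operatorname{T}}\tilde{B}=I$, to rewrite $B^{\operatorname{T}}C\tilde{B}=D$ in the intertwining form $C\tilde{B}=\tilde{B}D$, where $C=\operatorname{diag}\big(\lambda_{q,\beta_i}^p\lambda_{t,t}^{\beta_i}\big)$ and $D=\operatorname{diag}\big(\lambda_{t,\mu_j}^p\lambda_{q,t}^{\mu_j}\big)$. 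Comparing the $(i,j)$--entry exactly as in (\ref{abtt}) yields
$$\lambda_{q,\beta_i}^p\,\lambda_{t,t}^{\beta_i}\,\tilde{B}_{i,j}=\tilde{B}_{i,j}\,\lambda_{t,\mu_j}^p\,\lambda_{q,t}^{\mu_j}$$
for all $i,j$.

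Next I would select the channels $\beta_i=U^r$ and $\mu_j=U^t$; these are legitimate intermediate channels precisely because $\big(\tilde{B}_{p,t}^{q,t}\big)_{U^r,U^t}$ is a genuine, and by hypothesis nonzero, entry of the braiding matrix. Cancelling that nonzero factor and applying the total symmetry of $\lambda_{i,j}^k$ (the theorem following Lemma \ref{partunitary}), in the form $\lambda_{q,r}^p=\lambda_{p,q}^r$ and $\lambda_{q,t}^t=\lambda_{t,t}^q$, the entry equation collapses to the scalar identity
$$\lambda_{p,q}^r\,\lambda_{t,t}^r=\lambda_{t,t}^p\,\lambda_{t,t}^q.$$

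Finally I would square both sides and feed in the previous lemma, which gives $\big(\lambda_{t,t}^a\big)^2=1$ for $a=p,q,r$ under the corresponding nonvanishing assumptions on $\big(\tilde{B}_{a,t}^{a,t}\big)_{U^1,U^t}$. This forces $\big(\lambda_{p,q}^r\big)^2(\lambda_{t,t}^r)^2=(\lambda_{t,t}^p)^2(\lambda_{t,t}^q)^2=1$, and hence $\big(\lambda_{p,q}^r\big)^2=1$, which is the claim.

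The main obstacle, and the step I would treat most carefully, is the bookkeeping that makes $\beta_i=U^r$ and $\mu_j=U^t$ the row/column singled out by the hypothesis: one must match the index convention of (\ref{braiding matrix property}) against the tuple $(p,q,t,t)$ so that the distinguished nonzero entry of $\tilde{B}_{p,t}^{q,t}$ is indeed indexed by $U^r$ and $U^t$. Here the fact that $U^t$ carries the longest fusion rule is what guarantees $U^t$ arises as the channel $\mu_j$. A secondary point worth flagging is that the conclusion genuinely requires $\big(\lambda_{t,t}^a\big)^2=1$ for all three of $a=p,q,r$, so the clean statement presupposes that the braiding elements feeding the previous lemma are among the standing nonvanishing assumptions verified case by case for each $\mathcal{U}_k$.
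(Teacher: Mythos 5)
Your proposal is correct and takes essentially the same route as the paper: both specialize the relation (\ref{braid matrix and lambda}) to the tuple $(p,q,t,t)$ (using Lemma \ref{partunitary} to put it in the form $C\tilde{B}=\tilde{B}D$, which is what the paper's equation (\ref{abtt}) encodes), compare the entry indexed by $\beta_i=U^r$, $\mu_j=U^t$, cancel the hypothesized nonzero braiding element to get $\lambda_{p,q}^r\lambda_{t,t}^r=\lambda_{t,t}^p\lambda_{t,t}^q$, and finish with the preceding lemma $\left(\lambda_{t,t}^a\right)^2=1$. The only differences are cosmetic: you square the identity where the paper first multiplies by $\lambda_{t,t}^{\beta_i}$, and your closing caveat, that the lemma's nonvanishing hypotheses for $a=p,q,r$ are also tacitly required, is exactly what the paper absorbs into its standing assumption in Remark (\ref{rmkB}).
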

\begin{proof}
Choose $j$ such that $\mu_j=U^t$ in (\ref{abtt}),
and go through all $\beta_i$ such that $N_{a,b}^{\beta_i}=1$.
We get what we want by multiplying $\lambda_{t,t}^{\beta_i}$ on both sides of the equation
and setting $(p,q,r)=(a,b,\beta_i)$.
\end{proof}

\begin{remark}\label{rmkB}
From now on if we use 
elements in $\tilde{B}$, we will assume they are non-zero.
For any fixed $\mathcal{U}_k$, it can be checked case by case. 
Examples of computing elements of braiding matrix are given at the end of this section.

\end{remark}



\begin{lemma}
Define a map
$\sigma:(\mathcal{U},Y)\to(\mathcal{U},\overline{Y})$ 
by
\begin{itemize}
\item If $N_{t,t}^k=1$, then $\sigma(u^k):=\lambda_{t,t}^k u^k$;
\item If $N_{t,t}^k=0$, then $\sigma(u^k):=\lambda_{a,b}^k \sigma(u^a)_{(n)}\sigma(u^b)\restriction U^k$ for any $N_{a,b}^k=1$ and $u^a_{(n)}u^b\restriction U^k=u^k$ (Here $\restriction$ is the restriction),
\end{itemize} 
and linear expansion.
Then it is a well-defined homomorphism.
\end{lemma}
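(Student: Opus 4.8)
The plan is to exploit that $\sigma$ is scalar on each homogeneous summand: writing $\sigma|_{U^k}=c_k\cdot\mathrm{id}$ with $c_k=\lambda_{t,t}^k$ whenever $N_{t,t}^k=1$, I will show that both claims of the lemma reduce to a single family of scalar identities. Since $Y$ was normalized so that all its structure constants equal $1$, one has $Y(u^a,z)u^b=\sum_c\mathcal{I}_{a,b}^c(u^a,z)u^b$, whereas $\overline{Y}(u^a,z)u^b=\sum_c\lambda_{a,b}^c\mathcal{I}_{a,b}^c(u^a,z)u^b$. Applying $\sigma$ to the first expansion and comparing the coefficient of each (linearly independent) $\mathcal{I}_{a,b}^c$ with $\overline{Y}(\sigma u^a,z)\sigma u^b=c_ac_b\sum_c\lambda_{a,b}^c\mathcal{I}_{a,b}^c(u^a,z)u^b$ shows that $\sigma$ is a homomorphism exactly when
\[
c_c=c_ac_b\lambda_{a,b}^c\qquad\text{whenever }N_{a,b}^c=1 .
\]
The second branch of the definition is precisely the prescription $c_k=c_ac_b\lambda_{a,b}^k$ read off a chosen factorization $u^k=u^a_{(n)}u^b\restriction U^k$, so both well-definedness and the homomorphism property follow once this master identity is verified and shown to be independent of the channel.

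I would prove the identity first on the directly reachable modules, namely those with $N_{t,t}^c=1$. Taking the quadruple $(a,b,t,t)$ in (\ref{braid matrix and lambda}), i.e.\ equation (\ref{abtt}), and selecting the column $\mu_j=U^t$, the index symmetry $\lambda_{a,t}^t=\lambda_{t,t}^a$ and $\lambda_{b,t}^t=\lambda_{t,t}^b$, together with the self-duality fact $N_{b,t}^t=N_{t,t}^b$ and the standing nonvanishing assumption on the relevant entry of $\tilde{B}$ (Remark \ref{rmkB}), turn (\ref{abtt}) into $\lambda_{a,b}^{\beta}\lambda_{t,t}^{\beta}=\lambda_{t,t}^a\lambda_{t,t}^b$ for every intermediate $\beta$ with $N_{a,b}^{\beta}=1$. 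As $(\lambda_{a,b}^{\beta})^2=1$ by Theorem \ref{squarelambda}, this rearranges to $\lambda_{t,t}^{\beta}=\lambda_{t,t}^a\lambda_{t,t}^b\lambda_{a,b}^{\beta}$, which is exactly $c_\beta=c_ac_b\lambda_{a,b}^\beta$ for a triple $(a,b,\beta)$ all of first-branch type. In particular $c_1=\lambda_{t,t}^1=1$, so $\sigma$ fixes $\mathbf{1}$ and $\omega$.

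It remains to extend $c_k$ to the modules with $N_{t,t}^k=0$ and to prove coherence. Because $U^t$ carries the longest fusion rule, iterated fusion of the first-branch modules (in particular repeated fusion with $U^3$) exhausts $\mathcal{U}$, so each such $U^k$ occurs in some $U^a\boxtimes U^b$ with $a,b$ already assigned, and I define $c_k:=c_ac_b\lambda_{a,b}^k$ through that channel. The crux is independence of the channel: for two admissible factorizations $(a,b)$ and $(a',b')$ of $U^k$ one must show $c_ac_b\lambda_{a,b}^k=c_{a'}c_{b'}\lambda_{a',b'}^k$. I would obtain this by feeding the two channels into a common commutativity relation $(a,b,c,d)\sim(a,c,b,d)$ and clearing the resulting factors with $(\lambda)^2=1$ and the index symmetry, exactly along the inductive lines used in Section 4. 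This coherence (cocycle) step is the main obstacle, as it is what guarantees the scalars $c_k$ assemble into one well-defined map; granting the nonvanishing assumptions on $\tilde{B}$, the homomorphism property is then immediate from the master identity.
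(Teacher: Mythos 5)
Your proposal follows essentially the same route as the paper's own proof: you reduce both well-definedness and the homomorphism property to the single scalar identity $c_r=c_pc_q\lambda_{p,q}^r$, and you derive it exactly as the paper does, by taking the quadruple $(a,b,t,t)$ in (\ref{braid matrix and lambda}), selecting the column $\mu_j=U^t$ in (\ref{abtt}) (this is the paper's identity (\ref{lambdat})), and clearing with $\left(\lambda_{p,q}^r\right)^2=1$ from Theorem \ref{squarelambda}; the normalization $c_1=\lambda_{t,t}^1=1$ is likewise the paper's first step in this section.

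The one substantive difference is how the second branch ($N_{t,t}^k=0$) is treated, and it is the one place your argument stops short of a proof. The paper uses a structural fact you do not invoke: by the fusion rules, a module with $N_{t,t}^k=0$ occurs only when $m=4K+2$, and then it is unique (namely $k=m+1$), so well-definedness is a single identity $\lambda_{a,b}^k\lambda_{t,t}^a\lambda_{t,t}^b=\lambda_{a',b'}^k\lambda_{t,t}^{a'}\lambda_{t,t}^{b'}$, which the paper settles via (\ref{lambdat}). You instead set up a generic scheme (exhaust $\mathcal{U}$ by iterated fusion of first-branch modules) and explicitly defer the coherence identity $c_ac_b\lambda_{a,b}^k=c_{a'}c_{b'}\lambda_{a',b'}^k$ as ``the main obstacle,'' offering only a sketch. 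The sketch is in fact workable: choose a first-branch $d$ with $N_{a',d}^{b}\neq 0$ and $N_{a,d}^{b'}\neq 0$, apply the element-wise consequence of the quadruple $(k,a,a',d)$, namely $\lambda_{a,b}^k\lambda_{a',d}^{b}=\lambda_{a',b'}^k\lambda_{a,d}^{b'}$, and eliminate the extra factors using your master identity on the first-branch triples $(a,d,b')$, $(a',d,b)$ together with $c^2=\lambda^2=1$. But as written this is a plan rather than a proof, and note that it tacitly requires the nonvanishing of additional entries of $\tilde{B}$ beyond those already assumed in Remark \ref{rmkB}; the paper's route avoids this extra assumption precisely because it pins down the unique exceptional module first.
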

\begin{proof}
We write $u^p_{(n)}u^q=\sum_r w^r$,
and we write the $n$th product of $\overline{Y}$ as $(\overline{n})$.
By the definition of $\lambda_{p,q}^r$ we have $u^p_{(\overline{n})}u^q=\sum_r \lambda_{p,q}^r w^r$.
Note that from (\ref{abtt}) and by putting $\mu_j=t$,
we always have 
\begin{equation}\label{lambdat}
\lambda_{a,b}^{\beta_i}\lambda_{t,t}^{\beta_i}=\lambda_{a,t}^{t}\lambda_{b,t}^{t}.
\end{equation}


If $m\neq 4K+2$, then it is well-defined since the fusion rule that $N_{t,t}^k=1$ for all odd $k$.
If $m=4K+2$, then $t=2K+1$ and $k=m+1$ is the only case when $N_{t,t}^k=0$.
Assume there are $a',b'<k$ such that $N_{a',b'}^k=1$ and $u^{a'}_{(n)}u^{b'}\restriction U^k=u^k$.
We need to prove $\lambda_{a,b}^k\lambda_{t,t}^a\lambda_{t,t}^b =\lambda_{a',b'}^k\lambda_{t,t}^{a'}\lambda_{t,t}^{b'},$
and it is obvious by (\ref{lambdat}). So we get the well-defined map.

Now we check $\sigma(u^p_{(n)}u^q)=\sigma(u^p)_{(\overline{n})}\sigma(u^q)$.
We have $\sigma(u^p_{(n)}u^q)=\sum_r \lambda_{t,t}^r w^r$, 
and $\sigma(u^p)_{(\overline{n})}\sigma(u^q)=\lambda_{t,t}^p\lambda_{t,t}^q\sum_r \lambda_{p,q}^r w^r$.
We get the equation by (\ref{lambdat}) and Theorem (\ref{squarelambda}).

\end{proof}

\begin{theorem}\label{unique}
The VOA structure of $\mathcal{U}$ is unique 
if corresponding elements in braiding matrices are non-zero.
\end{theorem}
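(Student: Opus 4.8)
The plan is to show that the map $\sigma:(\mathcal{U},Y)\to(\mathcal{U},\overline{Y})$ from the previous lemma is an isomorphism of vertex operator algebras; this gives $(\mathcal{U},Y)\cong(\mathcal{U},\overline{Y})$, and since $\overline{Y}$ is an arbitrary VOA structure expanded in the fixed basis $\{\mathcal{I}_{b,c}^a\}$, it yields uniqueness under the stated nonvanishing hypotheses on $\tilde{B}$. The previous lemma already supplies that $\sigma$ is a well-defined algebra homomorphism, so it remains to check that $\sigma$ fixes $\mathbf{1}$ and $\omega$ and that $\sigma$ is bijective.

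First I would verify $\sigma(\mathbf{1})=\mathbf{1}$ and $\sigma(\omega)=\omega$. Both vectors lie in $U^1$, and since $U^t$ is self dual we have $N_{t,t}^1=1$, so on $U^1$ the map $\sigma$ is multiplication by $\lambda_{t,t}^1$; by the earlier lemma $\lambda_{t,t}^1=1$, so $\mathbf{1}$ and $\omega$ are fixed. More generally Theorem \ref{squarelambda} with Remark \ref{rmkB} gives $(\lambda_{t,t}^k)^2=1$, so every scalar $\lambda_{t,t}^k$ equals $\pm 1$ and is in particular nonzero --- this is exactly where the nonvanishing of the braiding matrix elements enters.

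Next I would establish bijectivity. The map $\sigma$ preserves the decomposition $\mathcal{U}=\bigoplus_k U^k$ and the $L(0)$-grading, and on each summand with $N_{t,t}^k=1$ it is multiplication by $\lambda_{t,t}^k=\pm1$, hence invertible there. To treat the remaining summand uniformly --- it occurs only for $m=4K+2$, where $t=2K+1$ and $N_{t,t}^{m+1}=0$ --- I would invoke simplicity of $(\mathcal{U},Y)$ (condition (1)): the kernel $\ker\sigma$ is an ideal, and since $\sigma(\mathbf{1})=\mathbf{1}\neq0$ we have $\ker\sigma\neq\mathcal{U}$, so $\ker\sigma=0$ and $\sigma$ is injective. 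Because $\sigma$ is grading preserving and each homogeneous component of $\mathcal{U}$ is finite dimensional, injectivity forces surjectivity, so $\sigma$ is bijective.

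Putting these together, $\sigma$ is a grading-preserving bijective homomorphism fixing $\mathbf{1}$ and $\omega$, hence a VOA isomorphism, and $(\mathcal{U},Y)\cong(\mathcal{U},\overline{Y})$ follows. I expect the main obstacle to be the exceptional summand with $N_{t,t}^k=0$: there $\sigma$ is defined through products rather than by a single scalar, so its consistency rests on the identity (\ref{lambdat}) and on $(\lambda_{t,t}^k)^2=1$ already secured in the previous lemma, and its invertibility is not visible component-by-component. The simplicity argument is what lets me bypass an explicit inversion on that piece and close the proof uniformly.
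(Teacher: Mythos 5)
Your proposal is correct and takes essentially the same route as the paper: the paper's proof simply declares the map $\sigma$ from the preceding lemma to be the desired isomorphism, citing that lemma together with Theorem \ref{squarelambda}. The details you supply---that $\sigma$ fixes $\mathbf{1}$ and $\omega$ since $\lambda_{t,t}^1=1$, and that $\sigma$ is bijective (using $(\lambda_{t,t}^k)^2=1$ on the generic summands and simplicity of $(\mathcal{U},Y)$ to handle the exceptional summand with $N_{t,t}^k=0$)---are precisely the verifications the paper leaves implicit.
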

\begin{proof}
The $\sigma$ from the previous lemma is the isomorphism we want.
It is the direct consequence of previous lemma and Theorem (\ref{squarelambda}).
\end{proof}

Note that we have (\ref{abtt}) and Remark (\ref{rmkB}), 
so it is enough to check elements of $\tilde{B}$ are non-zero.
We state examples of computing $\tilde{B}$, 
especially when $\mathcal{U}=\mathcal{U}_1=\mathcal{U}_{\operatorname{6A}}$,
and it can be found in \cite{DJY}. And we can get the uniqueness of $\mathcal{U}_2$ as an example
by direct computation following Proposition (\ref{fusion rules of virasoro modules}) and (\ref{FFK 2.19}).

\begin{example}
Consider braiding matrix for $L\left(25/28,0\right)$-modules.
Denote irreducible $L\left(25/28,0\right)$-modules $L\left(25/28,34/7\right)$
and $L\left(25/28,9/7\right)$ by $Q_{2}$ and $Q_{3}$
respectively. For convenience, we will denote $\left(\tilde{B}_{Q_{a},Q_{b}}^{Q_{c},Q_{d}}\right)_{Q_{e},Q_{f}}$
by $\left(\tilde{B}_{a,b}^{c,d}\right)_{e,f}$, $a,b,c$, $d,$ $e$,
$f$ $\in\left\{ 2,3\right\} $. 
Then we have
\label{(2332)_3,2 nonzero} \label{(3322)_3,3 nonzero}
\label{(2333)_3,2 nonzero} \label{(3,2,3,3)_2,3 nonzero}$\left(\tilde{B}_{2,2}^{3,3}\right)_{3,2}\not=0$,
$\left(\tilde{B}_{3,2}^{3,2}\right)_{3,3}\not=0$, $\left(\tilde{B}_{2,3}^{3,3}\right)_{3,2}\not=0$,
and $\left(\tilde{B}_{3,3}^{2,3}\right)_{2,3}\not=0$.
\end{example}

\begin{example}
We use notations in Remark (\ref{rmk}), and consider $L(11/12, 0)$-modules.
In this case $t=3$,
and we have
$\left(\tilde{B}_{a,3}^{b,3}\right)_{3,c}\neq 0$
for all $a,b,c\in \{3,5,7\}$ such that those $\tilde{B}_{i,j}$ 
appear in Theorem (\ref{squarelambda}) and compatible with fusion rules.

We write the braiding matrix as $B_{(a,b,c)}$.
We need to check all the 
$$(a,b,c)\in\left\{\begin{aligned}
&(3,3,3),(3,3,5),(3,5,3),(3,5,5),\\
&(5,3,3),(5,3,5),(5,5,3),(5,5,5),\\
&(5,5,7),(5,7,5),(7,5,5)
\end{aligned}\right\}.$$

To prove $B_{(3,3,3)}=\left(\tilde{B}_{3,3}^{3,3}\right)_{3,3}\neq 0$, 
we need to check 
$$
r(3,3,3,3)_{3,3}=r(3,2,3,2)_{2,3}r(2,2,3,3)_{3,2}+r(3,2,3,4)_{2,3}r(2,2,3,3)_{3,4}\neq 0.
$$
Since 
$$\begin{aligned}
r(3,2,3,2)_{2,3}&=r(3,2,2,1)_{2,2}r(2,2,2,2)_{1,3}=x^{1/4}x^{-1/4}\frac{[1]}{[2]},\\
r(2,2,3,3)_{3,2}&=r(2,2,2,2)_{3,1}r(1,2,2,3)_{2,2}+r(2,2,2,2)_{3,3}r(3,2,2,3)_{2,2},\\
&=x^{-1/4}\frac{[3]}{[2]}x^{1/4}+\left(-x^{-1/4-2/2}\frac{[1]}{[2]}x^{-1/4+3/2}\frac{[1]}{[3]}\right),\\
r(3,2,3,4)_{2,3}&=r(3,2,2,3)_{2,2}r(2,2,2,4)_{3,3}+r(3,2,2,3)_{2,4}r(4,2,2,4)_{3,3}\\
&=x^{-1/4+3/2}\frac{[1]}{[3]}x^{1/4}+x^{-1/4}\frac{[2]}{[3]}x^{-1/4+4/2}\frac{[1]}{[4]},\\
r(2,2,3,3)_{3,4}&=r(2,2,2,2)_{3,3}r(3,2,2,3)_{2,4}\\
&=-x^{-1/4-2/2}\frac{[1]}{[2]}x^{-1/4}\frac{[2]}{[3]},
\end{aligned}$$
let $r=x^{1/2}+x^{-1/2}$, and we get 
$$
\begin{aligned}
B_{(3,3,3)}
&=\frac{[1]}{[2]}\left(\frac{[3]}{[2]}-\frac{[1][1]}{[2][3]}\right)-\left(\frac{[1]}{[3]}+\frac{[1][2]}{[3][4]}\right)\frac{[1]}{[3]}\\
&=1-\frac{1}{r^2-1}-\frac{1}{(r^2-1)^2}-\frac{1}{(r^2-1)^2(r^2-2)}\\
&=1-\frac{1}{r^2-2}.
\end{aligned}
$$
That is, $\operatorname{Re}B_{(3,3,3)}> 0$, so $B_{(3,3,3)}\neq 0$

Following the same approach above, we have
$$\begin{aligned}
\operatorname{Im}B_{(3,3,5)}>0,&
\operatorname{Re}B_{(3,5,3)}<0,&
\operatorname{Im}B_{(3,5,5)}>0,&
\operatorname{Im}B_{(5,3,3)}>0,&
\operatorname{Re}B_{(5,3,5)}>0,\\
\operatorname{Im}B_{(5,5,3)}>0,&
\operatorname{Re}B_{(5,5,5)}<0,&
\operatorname{Re}B_{(5,5,7)}>0,&
\operatorname{Re}B_{(5,7,5)}<0,&
\operatorname{Re}B_{(7,5,5)}<0.
\end{aligned}$$


So all the elements in braiding matrix we need are non-zero. 
Thus $\mathcal{U}_2=U_1^1\oplus U_1^3\oplus U_1^5\oplus U_1^7=
\left[0,0\right]_1\oplus\left[2,\frac{5}{4}\right]_1
\oplus\left[4,\frac{19}{4}\right]_1\oplus\left[6,\frac{21}{2}\right]_1$
has a unique VOA structure
by Theorem (\ref{squarelambda}) and the previous theorem.
\end{example}

\section{Generated by Griess algebra}
It is proved in \cite{JZ} that $\mathcal{U}_1$ is generated by Griess algebra.
Now we prove that if $\mathcal{U}_{k-1}$ is generated by Griess algebra, so is $\mathcal{U}=\mathcal{U}_k$.
\begin{lemma}
$U^3_{k-1}$ is generated by Griess algebra.
\end{lemma}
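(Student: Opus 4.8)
The plan is to show $U^{3}\subseteq W$, where $W$ denotes the sub-vertex-operator-algebra of $\mathcal{U}=\mathcal{U}_{k}$ generated by its Griess algebra, i.e. by the weight-two space $\mathcal{U}_{2}$. I would organise the argument in two stages: first establish that the ``vacuum summand'' $U^{1}$ already lies in $W$, and then exhibit a single weight-two vector of $U^{3}$ which, by irreducibility of $U^{3}$ as a $U^{1}$-module, sweeps out all of $U^{3}$.

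For the first stage, write $U^{1}=P^{1}\otimes Q^{1}=\mathcal{U}_{k-1}\otimes Q^{1}$ with $Q^{1}=L(c_{k+7},0)$ the simple Virasoro vertex operator algebra, so that $U^{1}$ contains the two commuting subalgebras $\mathcal{U}_{k-1}\otimes\mathbf{1}$ and $\mathbf{1}\otimes Q^{1}$. By the inductive hypothesis $\mathcal{U}_{k-1}$ is generated by its Griess algebra $(\mathcal{U}_{k-1})_{2}$, and the embedding $u\mapsto u\otimes\mathbf{1}$ carries $(\mathcal{U}_{k-1})_{2}$ into the weight-two space $\mathcal{U}_{2}$; hence $\mathcal{U}_{k-1}\otimes\mathbf{1}\subseteq W$. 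Moreover $Q^{1}$ is generated by its conformal vector, and $\mathbf{1}\otimes\omega_{Q^{1}}$ is a weight-two vector of $\mathcal{U}$, so $\mathbf{1}\otimes Q^{1}\subseteq W$ as well. Finally the identity $(a\otimes\mathbf{1})_{(-1)}(\mathbf{1}\otimes b)=a\otimes b$ shows that these two subalgebras together generate all of $U^{1}$, whence $U^{1}\subseteq W$.

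For the second stage, a direct computation of conformal weights in the GKO decomposition (as carried out in Lemma \ref{realization of VOA} and Example \ref{U2}) shows that the lowest weight of $U^{3}=P^{3}\otimes Q^{3}$ equals $2$, the conformal weights of the two tensor factors adding up to $2$. Since $\mathcal{U}_{k}$ is of CFT type with vanishing weight-one space, the bottom level $(U^{3})_{2}$ is a nonzero subspace of the Griess algebra $\mathcal{U}_{2}\subseteq W$. By Theorem \ref{kvoa3}, $U^{3}=P^{3}\otimes Q^{3}$ is an irreducible $U^{1}$-module, being the tensor product of an irreducible $\mathcal{U}_{k-1}$-module and an irreducible Virasoro module. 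Consequently any nonzero $w\in(U^{3})_{2}$ generates $U^{3}$ under the modes of $U^{1}$; as $w\in W$, $U^{1}\subseteq W$, and $W$ is closed under all vertex operator modes, this forces $U^{3}\subseteq W$.

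I expect the main obstacle to be the first stage, namely verifying $U^{1}\subseteq W$: it fuses the inductive hypothesis for $\mathcal{U}_{k-1}$, the fact that a simple Virasoro vertex operator algebra is generated by its conformal vector, and the tensor-product generation identity, and one must check that the Griess vectors inherited from $\mathcal{U}_{k-1}$ genuinely land in weight two of $\mathcal{U}_{k}$. The other essential input is the conformal-weight computation guaranteeing $(U^{3})_{2}\neq 0$; once both are secured, the irreducibility of $U^{3}$ over $U^{1}$ makes the passage from a single weight-two vector to the whole summand automatic.
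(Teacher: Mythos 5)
Your overall strategy coincides with the paper's: exhibit one nonzero weight-two vector in $U^3$ and let irreducibility of $U^3$ over $U^1$ (together with $U^1\subseteq W$, which you verify carefully and the paper treats as immediate from the inductive hypothesis plus the Virasoro conformal vector) do the rest. The genuine gap is in the step you describe as ``a direct computation of conformal weights in the GKO decomposition'': you justify the claim that the lowest weight of $U^3=P^3\otimes Q^3$ equals $2$ by citing Lemma \ref{realization of VOA} and Example \ref{U2}, but those results only treat the cases $k=1$ and $k=2$. For general $k$ the factor $P^3=\mathcal{U}_{k-1,2}$ is defined recursively as a commutant summand and has no obvious lowest weight; determining a low-weight vector in it is exactly the nontrivial content of this lemma, and your proposal does not carry that computation out, so the claim is unverified for $k\geq 3$.

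The paper fills this by descending one more level of the GKO construction: as a $\mathcal{U}_{k-2}\otimes L(c_{k+5},0)$-module, $\mathcal{U}_{k-1,2}$ contains the summand $\mathcal{U}_{k-2,0}\otimes L\left(c_{k+5},h_{(3,1)}^{(k+5)}\right)$, whose bottom vector has weight $h_{(3,1)}^{(k+5)}=\frac{k+4}{k+6}$; since $h_{(1,3)}^{(k+6)}=\frac{k+8}{k+6}$, the identity $h_{(1,3)}^{(k+6)}+h_{(3,1)}^{(k+5)}=2$ produces the required weight-two vector in $U^3$ for every $k\geq 2$ (the case $k=1$ being the explicit $\frac{5}{7}+\frac{9}{7}=2$ you already have). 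Note also that the paper never needs to identify the \emph{lowest} weight of $U^3$: the existence of a single weight-two vector suffices, since any such vector lies in the Griess algebra and generates $U^3$ by irreducibility. Your formulation in terms of the minimal conformal weight would additionally require showing that this summand realizes the minimum, a claim that is plausible but likewise unproved in your argument.
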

\begin{proof}
Since $U^3_{k-1}$ is an irreducible $\mathcal{U}_{k-1}$-module,
we only need to find one vector with weight $2$ in it.

For $\mathcal{U}_1$, 
we have $U^{3}_0=\mathcal{V}(\frac{5}{7})\otimes L(\frac{25}{28},\frac{9}{7})$,
and $\frac{5}{7}+\frac{9}{7}=2.$

For $\mathcal{U}_k$ with $k\geq 2$, we have
$U^3_{k-1}=\mathcal{U}_{k-1,2}\otimes L\left(c_{k+6},h_{(1,3)}^{k+6}\right)$
and 
$$\mathcal{U}_{k-2,0}
\otimes L\left(c_{k+5},h_{(3,1)}^{k+5}\right)
=\left[0,h_{(3,1)}^{k+5}\right]_{k-2}\subseteq \mathcal{U}_{k-1,2}.$$
Note that
$$
h^{(k+6)}_{(1,3)}+ h^{(k+5)}_{(3,1)}=\frac{k+8}{k+6}+\frac{k+4}{k+6}=2,
$$
and the proof is complete.
\end{proof}

\begin{theorem}
$\mathcal{U}$ is generated by Griess algebra.
\end{theorem}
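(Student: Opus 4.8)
The plan is to let $W$ denote the vertex operator subalgebra of $\mathcal{U} = \mathcal{U}_k$ generated by its Griess algebra (the weight-two subspace), and to prove $W = \mathcal{U}$ by showing $U^i \subseteq W$ for every odd index $i$ in the decomposition $\mathcal{U} = \bigoplus_i U^i$. Here $U^1 = \mathcal{U}_{k-1} \otimes L(c_{k+7}, 0)$ is the subVOA over which the decomposition is taken, and each $U^i$ is an irreducible $U^1$-module by Theorem \ref{kvoa3}. Note that $W$ is a genuine subVOA, since it contains $\mathbf{1}$ and $\omega$.

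First I would show $U^1 \subseteq W$. The conformal vector $\Omega$ of the Virasoro factor $L(c_{k+7}, 0)$ is a weight-two vector, so $\mathbf{1} \otimes \Omega$ lies in the Griess algebra and hence in $W$; since $L(c_{k+7}, 0)$ is generated by $\Omega$, this gives $\mathbf{1} \otimes L(c_{k+7}, 0) \subseteq W$. By the induction hypothesis $\mathcal{U}_{k-1}$ is generated by its own Griess algebra $(\mathcal{U}_{k-1})_2$, whose image $(\mathcal{U}_{k-1})_2 \otimes \mathbf{1}$ consists of weight-two vectors of $\mathcal{U}$, so $\mathcal{U}_{k-1} \otimes \mathbf{1} \subseteq W$. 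Since a tensor product vertex operator algebra is generated by its two factors — using $(a \otimes \mathbf{1})_{(-1)}(\mathbf{1} \otimes b) = a \otimes b$ — it follows that $U^1 \subseteq W$.

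Next I would capture $U^3$ and then bootstrap upward. The preceding lemma provides a weight-two vector $w \in U^3$, which therefore lies in the Griess algebra and in $W$. As $U^3$ is an irreducible $U^1$-module with $U^1 \subseteq W$ and $W$ closed under all vertex operators, the $U^1$-submodule generated by $w$ is all of $U^3$, so $U^3 \subseteq W$. I would then induct on the odd index $i$: given $U^i \subseteq W$, the non-vanishing $\lambda_{3,i}^{i+2} \neq 0$ — which follows from Lemma \ref{3,i,i-2} together with the symmetries of Theorem \ref{alternation} — means that the $U^{i+2}$-component of $Y$ restricted to $U^3 \otimes U^i$ is a nonzero intertwining operator valued in the irreducible module $U^{i+2}$. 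Its image is a nonzero $U^1$-submodule, hence equals $U^{i+2}$; since $U^3, U^i \subseteq W$ and $W$ is closed, $U^{i+2} \subseteq W$. This exhausts every summand and gives $W = \mathcal{U}$.

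The hard part will be the base case $U^3 \subseteq W$, which is exactly the content of the preceding lemma: one must locate a genuine weight-two vector inside $U^3$ via the conformal-weight identity $h^{(k+6)}_{(1,3)} + h^{(k+5)}_{(3,1)} = 2$ and then invoke irreducibility of $U^3$ over $U^1$. Once $U^3$ sits inside $W$, the remaining summands follow almost formally from the structure-constant non-vanishing established in Section 4 and the principle that a nonzero intertwining operator into an irreducible module is surjective; the only point requiring care is to track that the fusion rule $N_{3,i}^{i+2} = 1$ holds throughout the admissible range, so that $\lambda_{3,i}^{i+2}$ is defined and nonzero and the induction terminates cleanly at the top of that range.
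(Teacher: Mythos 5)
Your proposal is correct and follows essentially the same route as the paper: establish $U^1\subseteq W$ from the induction hypothesis on $\mathcal{U}_{k-1}$ together with the Virasoro vector of $L(c_{k+7},0)$, get $U^3\subseteq W$ from the preceding lemma's weight-two vector and irreducibility, and then climb the odd summands by fusing with $U^3$. The only difference is one of explicitness: the paper compresses the induction step into ``$U^{2k+3}\subseteq U^3\boxtimes U^{2k+1}$,'' whereas you correctly spell out the implicit ingredients, namely that $\lambda_{3,i}^{i+2}\neq 0$ (from Section 4) and that a nonzero intertwining operator into the irreducible module $U^{i+2}$ has full image.
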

\begin{proof}
We already know $U^1$ and $U^3$ are generated by Griess algebra.
Assume $U^{2k+1}$ are generated by Griess algebra for $0\leq k\leq n$.
By fusion rules of $U^3$ we have $U^{2k+3}\subseteq U^3\boxtimes U^{2k+1}$,
so $U^{2n+3}$ is generated by Griess algebra.
Thus $\mathcal{U}$ is generated by Griess algebra.
\end{proof}

\section{Open problems}
Here are some open problems based on 
this paper and its citations.

\begin{problem}
	Quantum dimension theory \cite{DJX} is used
	in \cite{DJY} to determine irreducible modules and fusion rules of $\mathcal{U}_{6A}$. 
	For this paper, is there a good way to determine irreducible modules and fusion rules of $\mathcal{U}_k$ for all $k$ simultaneously?
\end{problem}

\begin{problem}
	So far, all elements of braiding matrices we computed are nonzero, 
	is this a general result?
\end{problem}

\begin{problem}
	It is proved in \cite{LYY, JZ} that $\mathcal{U}_1$ is generated by Ising vectors while
	we only proved that $\mathcal{U}_k$ are generated by Griess algebra. 
	Are all $\mathcal{U}_k$ generated by Ising vectors?
	The answer for this problem can help proving Conjecture A.14 in \cite{LY}.
\end{problem}

\section*{Acknowledgments}
W. Zheng is supported by the NSFC No.~12201334 and the National Science Foundation of Shandong Province No.~ZR2022QA023.




\begin{thebibliography}{99}
\bibitem[ABD]{ABD} T. Abe, G. Buhl, C. Dong, Rationality, regularity
and $C_{2}$-cofiniteness. \emph{Trans. AMS.} \textbf{356} (2004),
3391\textendash 3402.

\bibitem[ADL]{ADL} T. Abe, C. Dong, H. Li, Fusion rules for the vertex
operator algebras $M(1)^{+}$ and $V_{L}^{+}$ . \emph{Comm. Math.
	Phys.} \textbf{253} (2005), 171\textendash 219.

\bibitem[ADJR]{ADJR} C. Ai, C. Dong, X. Jiao, L. Ren, The irreducible
modules and fusion rules for the parafermion vertex operator algebras.
\emph{Trans. Amer. Math. Soc. }\textbf{370} (2018), no. 8, 5963\textendash 5981.


\bibitem[B]{B} R.E. Borcherds,
Vertex algebras, Kac-Moody algebras and the Monster, 
Proc.Nat.Acad.Sc. \textbf{83} (1986) 306\textendash 3071.

\bibitem[C]{C} J. H. Conway, A simple construction for the Fisher-Griess
Monster group. \emph{Invent. Math. }\textbf{79 }(1985), 513\textendash 540.


\bibitem[DJX1]{DJX} C. Dong, X. Jiao, F. Xu, Quantum dimensions and
quantum Galois theory. \emph{Trans. Amer. Math. Soc.}\textbf{ 365}
(2013), 6441\textendash 6469.

\bibitem[DJX2]{DJX2} C. Dong, X. Jiao and F. Xu, Mirror extensions of vertex operator algebras, {\em Comm. Math. Phys. } {\bf329} (2014), 263-294.

\bibitem[DJY]{DJY} C. Dong, X. Jiao, N. Yu, 6A-algebra and its representations.
\emph{J. Algebra.}\textbf{533}(2019), 174\textendash 210.

\bibitem[DL]{DL} C. Dong, J. Lepowsky, Generalized vertex operator
algberas and relative vertex operators. \emph{Progress in Mathematics,
}vol. 112. Birkhauser Boston. Massachusetts, 1993.

\bibitem[DLM1]{DLM1} C. Dong, H. Li, G. Mason, Regularity of rational
vertex operator algebras. \emph{Adv. Math.} \textbf{132} (1997), 148\textendash 166.

\bibitem[DLM2]{DLM2} C. Dong, H. Li, G. Mason, Twisted representations
of vertex operator algebras. \emph{Math. Ann. }\textbf{310 }(1998),\textbf{
}571\textendash 600.

\bibitem[DLM3]{DLM3}C. Dong, H. Li, G. Mason, Simple currents and
extensions of vertex operator algebras. \emph{Comm. Math. Phys.} \textbf{180
}(1996), 671-707.

\bibitem[DLM4]{DLM4}C. Dong, H. Li, G. Mason, Compact automorphism groups of vertex operator algebras.
\emph{Internat. Math. Res. Notices.} \textbf{18
}(1996), 913-921.

\bibitem[DM]{DM} C. Dong, G. Mason, On quantum Galois theory.\emph{
Duke Math. J. }\textbf{86} (1997), 305\textendash 321.

\bibitem[DMZ]{DMZ} C. Dong, G. Mason, Y. Zhu, Discrete series of
the Virasoro algebra and the moonshine module. Proc. \emph{Symp. Pure.
Math., American Math. Soc.} \textbf{56} II (1994), 295\textendash 316.


\bibitem[DZ]{DZ} C. Dong, W. Zheng, Uniqueness of VOA structure of $3C$-algebra and
$5A$-algebra. \emph{J. Algebra
}\textbf{572 }(2021), 76\textendash 110.

\bibitem[FFK]{FFK} G. Felder, J. Frohlich, G. Keller, Braid matrices
and structure constants for minimal conformal models. \emph{Comm.
Math. Phys.} \textbf{124} (1989), no. 4, 647\textendash 664.

\bibitem[FHL]{FHL} I. B. Frenkel, Y. Huang, J. Lepowsky, On axiomatic
approaches to vertex operator algebras and modules. \emph{Memoirs
American Math. Soc. }\textbf{104}, 1993.

\bibitem[FLM]{FLM} I. B. Frenkel, J. Lepowsky, A. Meurman, Vertex
operator algebras and the monster. \emph{Pure and Applied Math., }vol.
134, Academic Press, Massachusetts, 1988.

\bibitem[FZ]{FZ} I. B. Frenkel, Y. Zhu, Vertex operator algebras
associated to representations of affine and Virasoro algebras. \emph{Duke
Math. J. }\textbf{66 }(1992), no.1, 123\textendash 168.

\bibitem[GKO]{GKO} P. Goddard, A. Kent, D. Olive, Unitary representations
of the Virasoro and super Virasoro algebras. \emph{Comm. Math. Phys.
}\textbf{103 }(1986), 105\textendash 119.



\bibitem[H1]{H1} Y. Huang, Generalized rationality and a ``jacobi
identity\textquotedblright{} for intertwining operator algebras. \emph{Selecta
Math.}\textbf{ 6 }(2000), 225\textendash 267.

\bibitem[H2]{H2} Y. Huang, Differential equations and intertwining operators. \emph{Commun.
	Contemp. Math.}\textbf{ 7 }(2005), 375\textendash 400.

\bibitem[HKL]{HKL} Y. Huang, A. Kirillov, J. Lepowsky, Braided tensor
categories and extensions of vertex operator algebras\emph{. Comm.
Math. Phys}. \textbf{337} (2015), no. 3, 1143\textendash 1159.

\bibitem[JZ]{JZ} X. Jiao, W. Zheng, Vertex operator algebras generated by two Ising vectors,
\emph{J. Algebra} \textbf{610} (2022), 546\textendash 570.


\bibitem[KR]{KR} V.G. Kac, A.K. Raina, Bombay lectures on highest
weight representations of infinite dimensional Lie algebras. World
Scientific, Singapore, 1987.

\bibitem[KZ]{KZ} V.G. Knizhnik, A.B. Zamolodchikov, Current algebras
and wess-zumino model in two dimensions.\emph{ Nucl. Phys. B.} \textbf{247}
(1984), 83\textendash 103.

\bibitem[Li1]{L}  H. Li, Some finitenss properties of regular vertex operator algebras, {\em J. Algebra} {\bf 212} (1999), 495-514.

\bibitem[Li2]{Li}H. Li, Symmetric invariant bilinear forms on vertex
operator algebras. \emph{J. Pure Appl. Algebra} \textbf{96 }(1994),
279\textendash 297.

\bibitem[Lin]{Lin}X. Lin, Mirror extensions of rational vertex operator algebras.
\emph{Trans. Amer. Math. Soc.} \textbf{369} (2017), 3821\textendash 3840.


\bibitem[LY]{LY} C.H. Lam and H. Yamauchi, 
The Conway-Miyamoto correspondences for the Fischer 3-transposition groups.
\emph{. Trans. Amer. Math. Soc. }\textbf{375} (2022), no. 3, 2025\textendash 2067.

\bibitem[LYY]{LYY} C. H. Lam, H. Yamada, H. Yamauchi, Vertex operator
algebras, extended $E_{8}$ diagram, and McKay's observation on the
monster simple group\emph{. Trans. Amer. Math. Soc. }\textbf{359}
(2007), no. 9, 4107\textendash 4123.

\bibitem[M]{M} M. Miyamoto, Vertex operator algebras generated by
two conformal vectors whose $\tau$-involutions generate $S_3$.
\emph{J. Algebra }\textbf{268 }(2003), 653\textendash 671.




\bibitem[SY]{SY} S. Sakuma, H. Yamauchi, Vertex operator algebra
with two Miyamoto involutions generating $S_{3}$. \emph{J. Algebra
}\textbf{267 }(2003), 272\textendash 297.

\bibitem[TK]{TK} A. Tsuchiya, Y. Kanie, Vertex operators in conformal
field theory on P1 and monodromy representations of braid group. \emph{Adv.
Stud. Pure Math.} \textbf{16} (1988) 297\textendash 372.

\bibitem[W]{W} W. Wang, Rationality of Virasoro vertex operator algebras\emph{.
Internat. Math. Res. Notices. }Yale University. Connecticut, 1990\emph{. }

\bibitem[X]{X1} F. Xu, Mirror extensions of local nets, {\em Commun. Math. Phys.} {\bf 270} (2007), 835-847.

\bibitem[Zheng]{Zheng} W. Zheng,
The VOAs generated by two Ising vectors e and f with $\langle e,f\rangle=\frac{1}{2^8}$,
$\tau_e\tau_f=3$ or $\langle e,f\rangle=\frac{3}{2^9}$, \emph{J. Algebra} \textbf{572} (2021),
60\textendash 75.

\bibitem[Zhu]{Z} Y. Zhu, Modular invariance of characters of vertex
operator algebras. \emph{J. Am. Math. Soc.} \textbf{9} (1996), 237\textendash 302.



\end{thebibliography}
\end{document}